\def\ps@pprintTitle{%
 \let\@oddhead\@empty
 \let\@evenhead\@empty
 \def\@oddfoot{\small{ }}%
 \let\@evenfoot\@oddfoot}
\theoremstyle{plain}
 \newtheorem{thm}{Theorem}[section]
 \newtheorem{prop}[thm]{Proposition}
 \newtheorem{lem}[thm]{Lemma}
 \newtheorem{cor}[thm]{Corollary}
\theoremstyle{definition}
 \newtheorem{dfn}{Definition}[section]
 \newtheorem{smt}{Statement}[section]
\theoremstyle{remark}
 \newtheorem{rem}{Remark}
 \numberwithin{equation}{section}
\renewcommand{\le}{\leqslant}
\renewcommand{\ge}{\geqslant}
\renewcommand\footnote[2][]{} 
            \definecolor{mygray}{gray}{0.13}
            \colorlet{myblue}{blue!55!black}
            \definecolor{ramred}{RGB}{200,16,46}
            \definecolor{zow}{RGB}{50,200,150}
        \newtheorem*{sclaim}{Claim}
            \newcommand{\ds}[1]{\mathbb{#1}}
            \renewcommand{\hat}{\widehat} 
            \renewcommand{\phi}{\varphi}
                \newcommand{\rca}{{\sf RCA}_0}
                \newcommand{\join}{\oplus}
                \newcommand{\sig}[2]{\Sigma^{#1}_{#2}}
                \newcommand{\delt}[2]{\Delta^{#1}_{#2}}
                \newcommand{\pie}[2]{\Pi^{#1}_{#2}}
                \newcommand{\halts}{\downarrow}
                \newcommand{\zero}{\emptyset}
                \newcommand{\tred}{\le_\mathrm{T}}
                \newcommand{\wred}{\le_{\sf W}} 
                \newcommand{\swred}{\le_{\sf sW}} 
                \newcommand{\gwred}{\le_{\sf gW}} 
                \newcommand{\cred}{\le_{\sf c}} 
                \newcommand{\scred}{\le_{\sf sc}} 
                \newcommand{\wequiv}{\equiv_{\sf W}} 
                \newcommand{\swequiv}{\equiv_{\sf sW}} 
                \newcommand{\gwequiv}{\equiv_{\sf gW}} 
                \newcommand{\cequiv}{\equiv_{\sf c}} 
                \newcommand{\scequiv}{\equiv_{\sf sc}} 
                \newcommand{\cac}{{\sf CAC}}
                \newcommand{\caco}{\omega\text{-}{\sf CAC}}
                \newcommand{\scac}{{\sf SCAC}}
                \newcommand{\scaco}{\omega\text{-}{\sf SCAC}}
                \newcommand{\scacs}{\mathsf{SCAC^{small}}}
                \newcommand{\scacl}{\mathsf{SCAC^{large}}}
                \newcommand{\scact}{\mathsf{SCAC^{type}}}
                \newcommand{\lep}{\le_{\ds P}}
                \newcommand{\gep}{\ge_{\ds P}}
                \DeclareMathOperator{\lb}{lb}
                \DeclareMathOperator{\ran}{ran}
\begin{document}

\title{Variants of the chain-antichain principle in reverse mathematics}

\author[wssu]{Noah A.~Hughes}
\ead{hughesna@wssu.edu}

\address[wssu]{Department of Mathematics, Winston-Salem State University, 601 S. Martin Luther King Jr Dr, Winston-Salem, NC 27110, USA}

\vspace{18mm} \setcounter{page}{1} \thispagestyle{empty}

\begin{abstract}
Restricting the chain-antichain principle ($\cac$) to partially ordered sets which respect the natural ordering of the integers is a trivial distinction in the sense of classical reverse mathematics.
We utilize computability-theoretic reductions to formally establish distinguishing characteristics of $\cac$ and the aforementioned restriction to elaborate on the apparent differences obfuscated over $\rca$.
Stable versions of both principles are also analyzed in this way.
\end{abstract}

\begin{keyword}
reverse mathematics, computability theory, Weihrauch reductions, forcing, order theory, partially ordered sets, posets
\end{keyword}

\maketitle


\section{Introduction}

This paper is a contribution to the study of computable combinatorics, reverse mathematics, and the broadening literature wherein computability-theoretic reductions are used to reveal fine differences that may be unapparent while working in the big five subsystems of second-order arithmetic.
Our goal is to analyze a natural weakening of the chain-antichain principle and its variants utilizing both frameworks to formally witness the core combinatorial differences in each statement. 

A partially ordered set (poset) $(P, \le_P)$ is a set $P \subseteq \ds N$ together with a homogeneous relation\footnote{
        A relation is \emph{homogeneous} (or an \emph{endorelation}) iff it is a relation on a set with itself, i.e., a subset of $X \times X$.
    }
$\le_P$ which is antisymmetric,\footnote{
        $(x \le_P y \land y \le_P x) \to (x = y)$,
    }
reflexive,\footnote{$(x \le_P x)$,}
and transitive.\footnote{$(x \le_P y \land y \le_P z) \to (x \le_P z)$.}
Below, when clear from context, $P$ and $\le_P$ will be used interchangeably to refer to the poset $(P,\le_P)$.
The undecorated symbol $\le$ is exclusively used to denote the standard integer ordering on $\ds N$.
We say $x,y \in P$ are \emph{$\le_P$-comparable} if $x \le_P y$ or $y \le_P x$, and otherwise call $x,y \in P$ \emph{$\le_P$-incomparable} and write $x \mid_P y$.
When $x \le_P y$, $y$ is called a \emph{$\le_P$-successor} of $x$ and $x$ is called a \emph{$\le_P$-predecessor} of $y$.
A \emph{chain} in $P$ is a set of pairwise $\le_P$-comparable elements and an \emph{antichain} is a set of pairwise $\le_P$-incomparable elements.

\begin{smt}[The chain-antichain principle]\label{cac-statement}
    If $(P,\le_P)$ is an infinite poset, then there is an infinite set $X \subseteq P$ such that $X$ is either a chain or antichain in $P$.
\end{smt}

\begin{dfn}
    We say that a poset $(P,\le_P)$ with $P \subseteq \ds N$ is \emph{$\omega$-ordered} if for all $x,y \in P$, $x \le_P y$ implies that $x \le y$.
\end{dfn}

Restricting the chain-antichain principle to $\omega$-ordered posets, also called \emph{naturally} ordered posets, provides a {\it prima facie} weakening of statement \ref{cac-statement} to analyze.
In the view of classical reverse mathematics, these two statements (the chain-antichain principle and its restriction to $\omega$-ordered posets) are equivalent (see Theorem \ref{caco-rm} below).
Using computability-theoretic analysis, however, the subtle combinatorial differences in these statements is revealed.
In particular, a degree-theoretic analysis of antichains in $\omega$-ordered posets without an infinite computable chain suffices to distinguish the statements.
In the course of this work, we also analyze the so-called {\it stable versions} of these two statements using computability-theoretic reductions between instance-solution problems.
Forcing arguments are used to distinguish the stable version of the chain-antichain principle from its restriction to $\omega$-ordered posets.

We assume familiarity with the rudiments of compatibility theory (see Soare \cite{soare_turing_comp_2016}, Chapter 2 of Dzhafarov and Mummert \cite{dzhafarov-mummert_rmprp_2022}) throughout. 
For a general introduction to the study of computable combinatorics in reverse mathematics we recommend Hirschfeldt \cite{hirschfeldt_slicing_2015} and  Dzhafarov and Mummert \cite{dzhafarov-mummert_rmprp_2022}.

\section{Reverse Mathematics}

Standard reverse mathematical analysis takes place over the axiom system $\rca$, a subsystem of second-order arithmetic with induction restricted to $\sig01$ formulas and set comprehension restricted to $\delt01$ formulas. 
When working over $\rca$, careful attention must be paid to nonstandard models. 
To maintain clarity, we use the symbol $\ds N$ to denote the first-order part of a model of $\rca$ and reserve $\omega$ to denote the standard natural numbers.
We call a model of $\rca$ in which $\ds N = \omega$ an \emph{$\omega$-model}.
For a comprehensive treatment of reverse mathematics over the base system $\rca$, we refer to Simpson \cite{Simpson_2009}.

Statement \ref{cac-statement} can be seen as an infinitary generalization of Dilworth's theorem \cite{dilworth_decomposition_1950}. 
Viewed over $\rca$, the chain-antichain principle is a simple consequence of Ramsey's theorem for pairs in two colors: color $\le_P$-comparable pairs red and $\le_P$-incomparable pairs blue; then a red homogeneous set is a chain and a blue homogenous set is an antichain. 
Cholak, Jockusch, and Soare \cite{cholak_strength_2001} asked (in Question 13.8) whether this implication reverses. 
Hirschfeldt and Shore \cite{hirschfeldt_combinatorial_2007} answered this open question in the negative (showing that Ramsey's theorem for pairs in two colors strictly implies the chain-antichain principle over $\rca$.)
{The original motivation for this question arose from the following computablitiy-theoretic result of Herrmann \cite{herrmann_infinite_2001}.}

\begin{thm}[Theorem 3.1 of \cite{herrmann_infinite_2001}]\label{herrmanns_result}
    There is an infinite computable poset $(H, \le_H)$ with $H \subseteq \omega$ that does not contain a $\delt02$ infinite chain or a $\delt02$ infinite antichain.  
\end{thm}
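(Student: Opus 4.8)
The plan is to build the poset by a priority construction, reducing via the limit lemma to a diagonalization against the $\delt02$ sets. Since $A$ is $\delt02$ exactly when $A\tred\emptyset'$, writing $A_e=\{n:\Phi_e^{\emptyset'}(n)\halts=1\}$ it suffices to meet, for every $e$ with $\Phi_e^{\emptyset'}$ total and $A_e$ infinite, the requirements ``$A_e$ contains a $\le_H$-comparable pair'' and ``$A_e$ contains a $\le_H$-incomparable pair''; together they say that no infinite $\delt02$ set is a chain or an antichain in $H$.

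To dispose of transitivity cheaply I would realize $\le_H$ as a partial order of dimension two: alongside the construction I build a computable linear order $\preceq$ on $\omega$ and set $H=\omega$ and $a\le_H b\iff a\le b\text{ and }a\preceq b$. This is automatically a partial order; it is moreover $\omega$-ordered, so it simultaneously witnesses the restriction to $\omega$-ordered posets; and it is computable once $\preceq$ is. Under this definition $\{a_0<a_1<\dots\}$ is a chain in $H$ precisely when $a_0\prec a_1\prec\dots$ and an antichain precisely when $a_0\succ a_1\succ\dots$, so each requirement reduces to asking that $A_e$, listed in $\le$-order, is not $\prec$-monotone --- equivalently, that $A_e$ contains $a<b<c$ with $b$ the $\prec$-largest or $\prec$-smallest of the three. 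The order $\preceq$ is built in stages as a finite linear order on a growing domain, never revising the order of elements already present and always leaving room to insert a new element on either side of anything present; numbers enter the domain lazily, under the control of the highest-priority strategy currently interested in them, with any leftover numbers inserted at the end of each stage so that $\preceq$ ends up total.

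The strategy for a single requirement --- say a ``comparable pair'' requirement for $A_e$ --- keeps a \emph{candidate triple}, the three $\le$-least numbers currently appearing in $A_e$ above its restraint $n_e$, and when it acts it inserts those of $a,b,c$ not yet placed so that $b$ is $\prec$-extreme among them (always possible while the relevant number is still unplaced: if $a\prec b$ insert $c$ below $b$; if $b\prec a$ insert $c$ above $b$; and analogously when only one or two of the three are already placed). If a number drops out of the apparent $A_e$, the candidate triple is revised and restraints imposed below are lifted. Since ``$A_e$ is infinite'' is $\pie02$, I would run the strategies on a tree guessing whether each $A_e$ is infinite or finite; along the true path the restraints above a strategy stabilize, the $\emptyset'$-approximation eventually settles on the relevant initial segment of $A_e$, the candidate triple stabilizes, and --- provided the strategy still controls that triple's insertion --- the requirement is met, while requirements with $A_e$ finite or $\Phi_e^{\emptyset'}$ partial act only finitely often.

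The main obstacle I expect is reconciling the flickering of the $\emptyset'$-approximation with the irrevocability of $\preceq$: a strategy must commit $\preceq$ on candidate elements before it can know the approximation has settled, yet the candidate may still change, so one must guarantee that a later, favorable insertion is never pre-empted by an earlier, permanent one made by a higher-priority strategy before its own guess stabilized. The remedy is to force each strategy, once the restraints above it have settled, to restart on a candidate triple taken from the part of $A_e$ that is simultaneously above the current restraint and not yet in the domain of $\preceq$ --- an infinite reservoir, since $A_e$ is infinite while only finitely many numbers are placed by any stage --- so that the strategy itself performs the decisive insertions. Making the restraint bookkeeping, the lazy domain-filling (every number must still eventually enter $\preceq$, and $\preceq$ must stay a computable linear order), and the tree guesses mutually consistent is the technical heart of the argument; the rest is routine.
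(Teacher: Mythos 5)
First, note that the paper does not prove this statement at all: it is quoted from Herrmann's paper, so there is no internal proof to compare against. Judged on its own terms, though, your proposal has a fatal structural flaw, and it is precisely the feature you advertise as a bonus. You build $H=\omega$ with $a\le_H b\iff a\le b\wedge a\preceq b$ for a computable linear order $\preceq$, making $(H,\le_H)$ computable and $\omega$-ordered. But no computable $\omega$-ordered poset can witness Herrmann's theorem: by Theorem \ref{caco-dtwosoln} of this paper, an infinite computable $\omega$-ordered poset either has an infinite computable chain (which is in particular $\delt02$) or an infinite $\delt02$ antichain. Specialized to your poset: the set $X=\{x:\forall y>x\,(y\prec x)\}$ of elements with no $\le_H$-successor is $\pie01$; if some $n$ has no $\le_H$-successor in $X$, a greedy search yields an infinite computable chain, and otherwise one can pick, $\zero'$-computably, an increasing sequence of elements of $X$, which is automatically an infinite $\delt02$ antichain. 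So some requirement in your scheme is unmeetable, and no amount of tree-of-strategies bookkeeping, restraint management, or lazy insertion into $\preceq$ can repair this: the object you are trying to construct does not exist in the class you restricted yourself to.

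Indeed, the separation $\cac\not\cred\caco$ (Theorem \ref{caco-nceq}) rests exactly on this asymmetry: Herrmann's poset must fail to be $\omega$-ordered, i.e., its order must disagree with the ambient integer order in an essential, non-repairable way, and that disagreement is what a correct construction has to engineer. Realizing $\le_H$ as the intersection of two linear orders is not intrinsically wrong, but taking the standard order $\le$ as one of the realizers is; and even with two nonstandard computable realizers you would need to argue that the resulting linear orders admit no $\delt02$ monotone subsequences of the relevant kind, which is a genuinely harder problem than your sketch acknowledges (Herrmann's actual argument is a substantially more involved construction, not a routine finite-injury diagonalization against $\delt02$ sets). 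As written, the approach fails at the very first design decision, before any of the priority machinery is engaged.
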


While our weakening of the chain-antichain principle is equivalent to the original statement over $\rca$, Herrmann's poset $(H,\le_H)$ can be used to separate the two principles via a computable reduction.
To begin, we prove the equivalence results for the original statements and their stable analogs.

\begin{thm}\label{caco-rm}
    Over $\rca$, the following are equivalent:
        \begin{enumerate}
            \item {The chain-antichain principle.}
            \item {The chain-antichain principle for $\omega$-ordered posets.}
        \end{enumerate}
\end{thm}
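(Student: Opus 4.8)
The implication $(1)\Rightarrow(2)$ is immediate, since every $\omega$-ordered poset is in particular a poset. The plan for the converse is to reduce an arbitrary infinite poset to $\omega$-ordered ones by splitting its comparability relation according to whether it agrees or disagrees with the ambient ordering $\le$ of $\mathbb{N}$. Concretely, given an infinite poset $(P,\le_P)$, I would first form the relation $x \le_{Q^+} y \iff (x\le y)\wedge(x\le_P y)$ on $P$. One checks that $\le_{Q^+}$ is reflexive, antisymmetric, and transitive (each follows from the corresponding property of $\le$ together with that of $\le_P$), that its underlying set is the infinite set $P$, and that it is $\omega$-ordered by construction. Applying $(2)$ yields an infinite $X\subseteq P$ that is a chain or an antichain of $Q^+$. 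If $X$ is a $\le_{Q^+}$-chain, then for $x,y\in X$ with $x\le y$ one cannot have $y\le_{Q^+}x$ unless $x=y$ (that would force $y\le x$), so $x\le_{Q^+}y$ and hence $x\le_P y$; thus $X$ is an infinite chain of $P$ and we are done.

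The remaining case is that $X$ is an infinite $\le_{Q^+}$-antichain, which by the same reasoning says precisely that $x\not\le_P y$ whenever $x,y\in X$ and $x<y$. I expect this to be the one point that actually needs care: a single application of $(2)$ does not suffice, because $\le_{Q^+}$-incomparability only rules out the comparabilities of $P$ pointing upward in $\le$, and $X$ could still be linearly ordered by the reverse of $\le$. So I would iterate once: on the infinite set $X$, form $x\le_{Q^-} y \iff (x\le y)\wedge(y\le_P x)$, which (by the analogous checks, using transitivity and antisymmetry of $\le_P$ and $\le$) is again an $\omega$-ordered poset, capturing exactly the downward comparabilities of $P$ restricted to $X$. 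Applying $(2)$ to $(X,\le_{Q^-})$ gives an infinite $Y\subseteq X$ that is a chain or antichain of $Q^-$.

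To close, if $Y$ is a $\le_{Q^-}$-chain then, as before, for $x,y\in Y$ with $x<y$ we get $x\le_{Q^-}y$, i.e. $y\le_P x$, so $Y$ is an infinite chain of $P$ (now enumerated in decreasing $\le$-order). If instead $Y$ is a $\le_{Q^-}$-antichain, then $y\not\le_P x$ for $x,y\in Y$ with $x<y$, while $Y\subseteq X$ gives $x\not\le_P y$; hence $x\mid_P y$ and $Y$ is an infinite antichain of $P$. In every branch we have produced the desired infinite chain or antichain.

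Finally, the whole argument is formalizable in $\rca$: the relations $\le_{Q^+}$ and $\le_{Q^-}$ are defined by $\Delta^0_1$ formulas from $P$ (and from $X$, respectively), so the two auxiliary posets exist by $\Delta^0_1$-comprehension, and the derivation is a bounded case analysis invoking the principle in $(2)$ exactly twice. The only real obstacle, as noted, is recognizing that the antichain outcome of the first application must be fed into the second, complementary application; all of the order-theoretic verifications are routine. It is worth remarking that this is precisely the argument that collapses trivially in classical reverse mathematics — one simply takes a suitable re-enumeration — which is why the distinction only becomes visible under the finer reductions pursued in the rest of the paper.
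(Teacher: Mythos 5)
Your proposal is correct and follows essentially the same route as the paper's proof: the relations $\le_{Q^+}$ and $\le_{Q^-}$ are exactly the paper's $\le_+$ and $\le_-$, and the two successive applications of the $\omega$-ordered principle, with the same case analysis, match the argument given there. No gaps.
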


\begin{proof}
    We work in $\rca$.
    That item 1 implies item 2 is trivial: item 2 is a special case of item 1.
    To prove the reverse implication, fix an infinite poset $(P,\le_P)$ and set 
        \[
            \le_+ = \{(x,y) : x \le_P y \land x \le y\} \mbox{ and } \le_- = \{(y,x) : x \le_P y \land y \le x\}.
        \]
    Note that $\le_+$ and $\le_-$ are $\Delta^0_0$-definable over $P$ so $\rca$ proves these sets exist.
    Apply item 2 to $\le_+$ and obtain an infinite set $X$ which is either a chain or antichain in $\le_+$.
    Note if $X$ is a chain, then it is a chain under $\le_P$, and we are done.

    If instead $X$ is an antichain in $\le_+$, it may not be an antichain under $\le_P$.
    In this case, consider $\le_- \restriction (X \times X)$, and note that it is infinite and $\omega$-ordered.
    Again, apply item 2 and obtain an infinite set $Y \subseteq X$ which is either a chain or antichain with respect to $\le_-$. 

    We claim $Y$ is a chain or antichain in $\le_P$.
    Note that if $Y$ is a chain with respect to $\le_-$, then as before $Y$ is a chain with respect to $\le_P$.
    If $Y$ is instead an antichain in $\le_-$, then because $X$ is an antichain in $\le_+$, every pair in $Y$ is incomparable in both $\le_+$ and $\le_-$, and so also in $\le_P$.
    Hence $Y$ is an antichain in $\le_P$ and the proof is complete.
\end{proof}

\begin{rem}The equivalence over $\omega$-models of the chain-antichain principle and its restriction to $\omega$-ordered posets is implicit in Lemma 2.12 of Towsner \cite{towsner_constructing_2020}.\end{rem}

Analyzing {\it stable} versions of combinatorial principles is a fruitful technique established in Cholak, Jockusch and Slaman \cite{cholak_strength_2001}. 
Hirschfeldt and Shore \cite{hirschfeldt_combinatorial_2007} isolated the following notion of stability for posets.

\begin{dfn}
    For an infinite poset $(P, \le_P)$ we say an element $x \in P$ is 
        \begin{itemize}
            \item \emph{small} if $x \le_P y$ for all but finitely many $y \in P$
            \item \emph{large} if $y \le_P x$ for all but finitely many $y \in P$
            \item \emph{isolated} if $x \mid_P y$ for all but finitely many $y \in P$
        \end{itemize}
    We say $(P,\le_P)$ is \emph{stable} if all elements are either small or isolated, or all elements are either large or isolated.
    In the first case, we call $(P,\le_P)$ a stable poset of the \emph{small type}.
    In the second case, we call $(P,\le_P)$ a stable poset of the \emph{large type}.
\end{dfn}

\begin{rem}
    Any $\omega$-ordered stable poset is necessarily a stable poset of the small type.
\end{rem}

Again, when viewed over $\rca$, the chain-antichain principle for stable posets is equivalent to its further restriction to $\omega$-ordered stable posets.

\begin{cor}\label{scaco-rm}
    Over $\rca$, the following are equivalent:
        \begin{enumerate}
            \item {The chain-antichain principle restricted to stable posets.}
            \item {The chain-antichain principle restricted to $\omega$-ordered stable posets.}
        \end{enumerate}
\end{cor}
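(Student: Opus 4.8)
The plan is to run the proof of Theorem~\ref{caco-rm} essentially verbatim, inserting at each stage the extra check that the auxiliary poset built is not merely $\omega$-ordered but stable, so that item~2 of the corollary may be applied to it. Working in $\rca$, the implication $1 \Rightarrow 2$ is again immediate. For the converse I would fix a stable poset $(P,\le_P)$ and first reduce to the case that it is of the small type: if it is of the large type, replace $\le_P$ by its order-reverse $\ge_P = \{(x,y) : y \le_P x\}$, which $\rca$ proves to exist, is a stable poset of the small type, and has exactly the same chains and antichains --- and for which the relation ``$\le_+$'' built below coincides with the relation ``$\le_-$'' that Theorem~\ref{caco-rm} builds for the original poset.

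So assume $(P,\le_P)$ is stable of the small type and form $\le_+ = \{(x,y) : x \le_P y \wedge x \le y\}$, which exists by $\Delta^0_0$ comprehension. I would verify that $\le_+$ is an $\omega$-ordered stable poset of the small type: being $\omega$-ordered is immediate, and for stability note that a small element of $\le_P$ stays small in $\le_+$ because only finitely many $y \in P$ lie below $x$ in the standard ordering, while an isolated element stays isolated because $\le_+ \subseteq \le_P$ as sets of pairs, so that every $\le_P$-incomparability survives in $\le_+$. Applying item~2 to $\le_+$ yields an infinite $X \subseteq P$ that is a chain or an antichain in $\le_+$; if it is a chain, then it is a chain in $\le_P$ and we are finished.

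In the remaining case $X$ is an infinite antichain in $\le_+$, and here I would first isolate the structural fact that drives the argument: for $a,b \in X$ with $a < b$, incomparability in $\le_+$ forces $a \not\le_P b$ (since $b \le_+ a$ is impossible), so $X$ contains no element small in $\le_P$ --- a small element would be $\le_P$-below all but finitely many $y$, contradicting $a \not\le_P b$ for the infinitely many $b \in X$ lying above it --- and therefore, as $(P,\le_P)$ is of the small type, every element of $X$ is \emph{isolated} in $\le_P$. This is precisely what is needed to conclude that $\le_- \restriction (X \times X)$, where $\le_- = \{(y,x) : x \le_P y \wedge y \le x\}$, is itself an $\omega$-ordered \emph{stable} poset: for each $a \in X$ and each $b \in X$ with $b > a$ one has $a \mid b$ in $\le_-$ whenever $a \mid_P b$, and the latter holds for all but finitely many $b$ by isolation, so every element of $\le_- \restriction (X \times X)$ is isolated there. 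Applying item~2 one last time gives an infinite $Y \subseteq X$ that is a chain or antichain in $\le_-$; a chain in $\le_-$ is a chain in $\le_P$, while an antichain in $\le_-$ consists of pairs incomparable in both $\le_+$ and $\le_-$, hence in $\le_P$, so $Y$ is the required homogeneous set.

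The one genuinely new point relative to Theorem~\ref{caco-rm} is the stability bookkeeping, and its crux is the observation that an infinite $\le_+$-antichain drawn from a small-type poset must consist of $\le_P$-isolated elements; without this one could not justify the second application of item~2 to $\le_- \restriction (X \times X)$. I would also keep an eye on nonstandard models, since ``for all but finitely many'' is $\Sigma^0_2$ --- but each verification above amounts to a finitary comparison of bounds that $\rca$ performs without trouble, so I anticipate no real difficulty there.
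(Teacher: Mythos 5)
Your proof is correct and takes essentially the paper's route: rerun the proof of Theorem~\ref{caco-rm} on a stable poset and check that the auxiliary $\omega$-ordered posets are themselves stable, so that item~2 can be applied twice. The only cosmetic differences are that you handle the large type by passing to the dual order (the paper instead observes directly that $\le_+$ renders large elements isolated and $\le_-$ renders them small), and that you verify stability of $\le_- \restriction (X \times X)$ locally, via the nice observation that a $\le_+$-antichain in a small-type poset consists of $\le_P$-isolated elements, rather than checking stability of $\le_-$ on all of $P$.
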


\begin{proof}
    Repeat the proof of theorem \ref{caco-rm} beginning with a stable poset $(P,\le_P)$. 
    If $P$ is of the small type, then $\le_+$ and $\le_-$ both define posets of the small type. If $P$ is of the large type, then $\le_+$ will render large elements $x \in P$ isolated and $\le_-$ will render them small. In either case, the posets $\le_+$ and $\le_-$ are both stable and $\omega$-ordered whence the proof goes through.
\end{proof}

\begin{rem}
    Corollary 3.6 and proposition 3.8 of Hirschfeldt and Shore \cite{hirschfeldt_combinatorial_2007} prove that the chain-antichain principle \emph{strictly} implies its restriction to stable posets. Consequently, in view of theorem \ref{caco-rm} and corollary \ref{scaco-rm}, we conclude that the chain-antichain principle for $\omega$-ordered posets {\it strictly} implies the chain-antichain principle for $\omega$-ordered stable posets over $\rca$.
\end{rem}



We conclude this section with a degree-theoretic analysis of the infinite antichains in $\omega$-ordered posets. Contrast Theorem \ref{caco-dtwosoln} with Herrmann's poset $(H, \le_H)$ from Theorem \ref{herrmanns_result}.

\begin{thm}\label{caco-dtwosoln}
    If $(P,\le_P)$ is an infinite computable $\omega$-ordered poset which contains no infinite computable chain, then $(P,\le_P)$ has an infinite $\delt02$ antichain.
\end{thm}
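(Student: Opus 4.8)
The plan is to build the antichain as a $\emptyset'$-computable sequence $m_0 < m_1 < \cdots$, each $m_k$ being a $\le_P$-maximal element of a carefully shrinking computable subposet. The engine — and the only place the hypothesis ``no infinite computable chain'' is used — is the following observation. For any infinite computable set $S \subseteq P$, the naive greedy attempt to build a chain in $S$, namely $s_0 = \min S$ and $s_{k+1} = \min\{x \in S : s_k <_P x\}$, must terminate: if it never did, every queried set would be nonempty, each $s_{k+1}$ would be located by unbounded search, and $\{s_k : k \in \omega\}$ would be an infinite computable chain in $P$. Hence the construction halts at some $s_k$ with $\{x \in S : s_k <_P x\} = \emptyset$; as $P$ is $\omega$-ordered every $\le_P$-successor of $s_k$ exceeds it, so $s_k$ is $\le_P$-maximal in $S$. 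Crucially, ``$\{x \in S : s_k <_P x\} = \emptyset$'' is $\Pi^0_1$ in an index for $S$ and in $s_k$, so $\emptyset'$ can run the greedy construction, test at each stage whether it is stuck, and thereby produce — uniformly in an index for $S$ — a $\le_P$-maximal element of $S$.

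The second ingredient is purely structural: in an $\omega$-ordered poset, a $\le_P$-maximal element $m$ of a subposet $S$ is \emph{isolated in $S$}, i.e.\ comparable with only finitely many elements of $S$. Indeed any $x \in S$ comparable with $m$ must satisfy $x <_P m$ (maximality rules out $m <_P x$), hence $x < m$ in $\ds N$; so the elements of $S$ comparable with $m$ all lie below $m$ and there are finitely many. Consequently $S' := \{x \in S : x > m \text{ and } x \mid_P m\}$ is cofinite in $S$, in particular infinite, and computable uniformly from an index for $S$ and $m$.

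Now iterate. Put $P_0 = P$; given the computable infinite $\omega$-ordered subposet $P_k$, use the first ingredient to have $\emptyset'$ compute a $\le_P$-maximal element $m_k$ of $P_k$, and set $P_{k+1} = \{x \in P_k : x > m_k \text{ and } x \mid_P m_k\}$, which by the second ingredient is again computable, infinite and $\omega$-ordered, with an index obtained computably from that of $P_k$ together with $m_k$. Then $k \mapsto m_k$ is $\emptyset'$-computable, the $m_k$ are strictly increasing (as $m_{k+1} \in P_{k+1} \subseteq \{x : x > m_k\}$), and for $j < k$ we have $m_k \in P_k \subseteq P_{j+1} \subseteq \{x : x \mid_P m_j\}$, so the $m_k$ are pairwise $\le_P$-incomparable. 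Hence $\{m_k : k \in \omega\}$ is an infinite antichain computable from $\emptyset'$, i.e.\ $\Delta^0_2$.

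The point that requires care — and is really the whole idea — is choosing the right task for $\emptyset'$. The obvious approach of extending a finite antichain by a ``good'' (infinitely branching) element poses a $\Pi^0_2$ question and yields only a $\Delta^0_3$ antichain; the maneuver above replaces it with ``has the greedy chain got stuck?'', which is merely $\Pi^0_1$ relative to the index, and it is precisely $\omega$-orderedness that both makes the stuck point $\le_P$-maximal and forces maximal elements to be isolated — exactly what fails for Herrmann's poset of Theorem~\ref{herrmanns_result}. The remainder is bookkeeping: verifying that the subposet indices are produced computably so that the resulting sequence is genuinely $\Delta^0_2$.
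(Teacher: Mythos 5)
Your proof is correct and is essentially the paper's own argument: both rest on locating $\le_P$-maximal elements via a greedy chain that must get stuck (the only use of the no-computable-chain hypothesis), on the observation that $\omega$-orderedness makes distinct maximal elements pairwise incomparable, and on the fact that maximality is a $\Pi^0_1$ condition, so $\emptyset'$ can find the witnesses and the resulting antichain is $\Delta^0_2$. The only cosmetic difference is that the paper works once with the single $\Pi^0_1$ set of all $\le_P$-maximal elements of $P$ and selects successive members of it, whereas you re-run the stuck-greedy-chain argument inside shrinking computable subposets; the underlying mechanism is the same.
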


\begin{proof}
    Fix such a poset a $(P,\le_P)$.
    Notice the set $X$ of elements with no $\le_P$-successor is $\pie01$:
        \[
            X = \{ x \in \omega : \forall y (x < y \to x \not \le_P y)\}.
        \]
    We claim every integer $n$ must have a $\le_P$-successor $x_n \in X$.
    By way of contradiction, assume $n$ is the least integer with no $\le_P$-successor in $X$.
    We can then construct a computable chain via a greedy algorithm: let $y_0 = n$ and assume we have defined $Y_\ell = \{y_0 \le_P y_1 \le_P \cdots \le_P y_\ell\}$. 
    Since $y_0 \le_P y_\ell$, $y_\ell$ has no $\le_P$-successor in $X$.
    So we can choose the least integer $y \ne y_\ell$ such that $y_\ell \le_P y$ and set $Y_{\ell+1} = Y_\ell \cup \{y\}.$
    The set $Y = \bigcup_{\ell \in \omega} Y_\ell$ is a computable chain of $\le_P$, providing the desired contradiction.

    Set $x_0$ to be the least $\le_P$-successor of $0$ contained in $X$.
    Assume that $x_n$ has been defined and let $x_{n + 1}$ be the least element of $X$ $\le_P$-above $x_n + 1$.
    Notice $x_n \mid_P x_{n+1}$ because $x_n \not \le_P x_{n+1}$ since $x_n \in X$, and $x_{n+1} \not \le_P x_n$ since $x_{n+1} \ge x_n + 1 > x_n$.
    Let $Y = \{x_n : n \in \omega\}$ and note by construction that $Y \tred X$, so $Y$ is $\delt02$.
    Note $Y$ is an antichain because each element in $Y$ has no $\le_P$ successor. 
    Indeed, there can be no pair $x_m, x_n \in Y$ such that $x_m \le_P x_n$, so the proof is complete.
\end{proof}


\section{Computability-theoretic reductions}

\subsection{Definitions}

To formally distinguish the logical content of these various principles, we introduce the main computability-theoretic reductions used in standard reverse mathematics .
When discussing computability theoretic reductions, we restrict our attention to $\omega$-models.
That is, we work with $\omega$ and its subsets and not the more general set $\ds N$.
Note how the combinatorial principles herein have the following $\pie12$-gestalt:
    \[
        \forall X (\phi(X) \to \exists Y \psi(X,Y))
    \]
where $X$ is an infinite poset, $\phi$ is an arithmetical formula describing the properties of $X$ (e.g., stable or $\omega$-ordered), and $\psi$ is an arithmetical formula asserting that $Y$ is infinite and either a chain or antichain in $X$.
This pattern motivates the concept of \emph{instance-solution problems} (or simply \emph{problems} for short).
We formulate a mathematical principle $\sf T$ of $\pie12$-form as a problem with \emph{instances} being all $X$ that satisfy $\phi$ and \emph{solutions} (to $X$) being all $Y$ that, together with $X$, satisfy $\psi$.

More formally, we define problems as multifunctions between represented spaces when more generality is required to encode the objects referred to in $\sf T$.
This is the usual approach taken in computable analysis, but as all of the principles discussed in the course of this work concern only $\omega$ and its subsets, we do not utilize this generality.
We recommend Brattka, Gherardi and Pauly \cite{brattka-cherardi-pauly_weihrauch_comp_analysis_2021} for an introduction to this formulation and an overview of its applications in computable analysis.

\begin{dfn}
    Let $\cac$ denote the instance-solution problem with instances infinite posets $(P,\le_P)$ with $P \subseteq \omega$ and solutions $Y$ which are either an infinite chain or infinite antichain in $(P,\le_P)$.
\end{dfn}

\begin{dfn}
    The following instance-solution problems are all restrictions of $\cac$.
    \begin{itemize}[align=left]
        \item[$\scac$] is the problem $\cac$ restricted to stable instances.
        \item[$\caco$] is the problem $\cac$ restricted to $\omega$-ordered instances.
        \item[$\scaco$] is the problem $\cac$ restricted to $\omega$-ordered stable instances.
    \end{itemize}
\end{dfn}

\def\P{\mathsf{P}}
\def\Q{\mathsf{Q}}
Given two problems ${\sf P}$ and ${\sf Q}$ we say ${\sf P}$ is {\it reducible} to ${\sf Q}$, written $P \le_* Q$, if given any instance $X_{\sf P}$ of ${\sf P}$, there is a way to transform it (perhaps computably) into an instance $X_{\sf Q}$ of ${\sf Q}$, such that any solution $Y_Q$ to $X_Q$ can then be transformed into a solution $Y_{\sf P}$ of the original instance $X_{\sf P}$ of ${\sf P}$. We say ${\sf P}$ and ${\sf Q}$ are {\it equivalent}, written $\P \equiv_* \Q$, if $\P$ is reducible to $\Q$ and $\Q$ is reducible to $\P$. The four main reductions are defined next.

\begin{dfn}\label{reduc}
    Given two problems $\sf P$ and $\sf Q$, we say
    \begin{enumerate}
        \item {$\P$ is \emph{computably reducible} to $\Q$, written $\P \cred \Q$, if and only if given any instance $X_{\P}$ of $\P$, there is an instance $X_{\Q}$ of $\Q$ such that $X_{\Q} \tred X_{\P}$, and for any solution $Y_{\Q}$ of $X_{\Q}$, there is a solution $Y_{\P}$ of $X_{\P}$ such that $Y_{\P} \tred X_{\P} \join Y_{\Q}$.}

        \item {$\P$ is \emph{strongly computably reducible} to $\Q$, written $\P \scred \Q$, if and only if given any instance $X_{\P}$ of $\P$, there is an instance $X_{\Q}$ of $\Q$ such that $X_{\Q} \tred X_{\P}$, and for any solution $Y_{\Q}$ of $X_{\Q}$, there is a solution $Y_{\P}$ of $X_{\P}$ such that $Y_{\P} \tred Y_{\Q}$.}

        \item {$\P$ is \emph{Weihrauch reducible} to $\Q$, written $\P \wred \Q$, if and only if there are two fixed Turing functionals $\Phi$ and $\Psi$ such that, given any instance $X_{\P}$ of $\P$, the set defined by $\Phi^{X_{\P}}$ is an instance of $\Q$, and any solution $Y_{\Q}$ of this instance has that $\Psi^{X_{\P} \join Y_{\Q}}$ defines a solution to $X_{\P}$.}

        \item {$\P$ is \emph{strongly Weihrauch reducible} to $\Q$, written $\P \swred \Q$, if and only if there are two fixed Turing functionals $\Phi$ and $\Psi$ such that, given any instance $X_{\P}$ of $\P$, the set defined by $\Phi^{X_{\P}}$ is an instance of $\Q$, and any solution $Y_{\Q}$ of this instance has that $\Psi^{Y_{\Q}}$ defines a solution to $X_{\P}$.}
    \end{enumerate}
\end{dfn}

\begin{figure*}[t]
\centering
    \begin{tikzcd}
     & \swred \arrow[ld, Rightarrow] \arrow[rd, Rightarrow] & \\
        \wred \arrow[rd, Rightarrow] \arrow[d, Rightarrow] & & \scred \arrow[ld, Rightarrow] \\
        \gwred  & \cred &                          
    \end{tikzcd}
    \hskip2cm
    \begin{tikzcd}
            {\sf P} & \swred & {\sf Q}\\[-20pt]
            X_{{\sf P}} \arrow[rr, "\Phi"] \arrow[d, dotted] &     & X_{{\sf Q}} \arrow[d, dotted]   \\
            Y_{{\sf P}}  &     & Y_{{\sf P}} \arrow[ll, "\Psi"']
        \end{tikzcd}
\caption{The left diagram summarizes the implications between each sort of reduction. The right diagram provides a graphical representation of the reduction paradigm in the specific context of a strong Weihrauch reduction.}
    \label{ct_reductions}
\end{figure*}
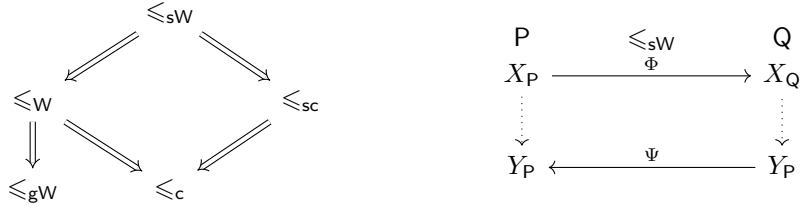

\def\Join{\bigoplus}
To incorporate the analysis of multiple uses of a mathematical principle in a proof, we utilize \emph{generalized Weihrauch reductions} via Hirschfeldt-Jockusch games (see Hirschfeldt and Jockusch \cite{hirschfeldt_notions_2016}, Definitions 4.1 and 4.3).
Define the \emph{$n$-fold join} of sets $X_0,\dots, X_n$ by
    \[
        \Join_{i \le n} X_i = \{n\} \join \{(i,k) : i \le n \land  k \in X_i \}.
    \]
When it is clear from context, we will write $X_0$ for $\Join_{i \le 0} X_i$ and $X_0 \join X_1$ for $\Join_{i \le 1} X_i$.

\begin{dfn}
    Given problems $\sf P$ and $\sf Q$, the \emph{reduction game} $G(\Q \to \P)$ is a two-player game ending when one player wins.
    The game is played as follows.
    
    For the first move, Player I begins by playing a $\P$-instance $X_0$.
    Player II responds by either playing an $X_0$-computable solution to $X_0$, in which case they win, or by playing a $\Q$-instance $Y_n \tred X_0$.
    If Player II cannot respond, Player I wins.

    On the $n$th move, for $n > 1$, Player I plays a solution $X_{n-1}$ to the $\Q$-instance $Y_{n-1}$.
    Player II responds by either playing a $(\Join_{i < n} X_i)$-computable solution to $X_0$, in which case they win, or by playing a $\Q$-instance $Y_n \tred (\Join_{i < n} X_i)$.

    If there is no move at which Player II wins, then Player I wins.
\end{dfn}

\begin{dfn}
    A \emph{computable strategy} for Player II in the reduction game $G(\Q \to \P)$ is a Turing functional which, when given the join of the first $n$ moves of Player I, outputs the $n$th move for Player II.
    Specifically, the strategy is a functional $\Phi$ such that, if $Z$ is the join of the first $n$ moves of Player I, then $\Phi^Z = V \join Y$ where $Y$ is the $n$th move of Player II, and $V = \{1\}$ if Player II wins with $Y$, or $V = \zero$ otherwise.
    The strategy is \emph{winning} if it allows Player II to win no matter what moves Player II makes.
\end{dfn}

\begin{dfn}
    We say $\P$ is \emph{Weihrauch reducible to $\Q$ in the generalized sense}, $\P \gwred \Q$, if there is a winning computable strategy for Player II in the game $G(\Q \to \P)$.
\end{dfn}

\subsection{Applications of the reverse mathematical proof}

Notice the proof of theorem \ref{caco-rm} can be carried out uniformly in the given poset $(P,\le_P)$.
Hence we obtain the following.

\begin{thm}\label{caco-gwequiv}
    $\cac \gwequiv \caco$.
\end{thm}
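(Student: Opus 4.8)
The plan is to prove the two generalized Weihrauch reductions separately. The reduction $\caco \gwred \cac$ is immediate: since $\caco$ is by definition the restriction of $\cac$ to a subclass of instances, with the \emph{same} notion of solution, the identity functionals witness $\caco \wred \cac$, and hence $\caco \gwred \cac$. All of the work is in the reverse reduction $\cac \gwred \caco$, and here the point is simply that the proof of Theorem~\ref{caco-rm} is uniform and appeals to the $\omega$-ordered principle only twice.

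To make this precise I would exhibit a winning computable strategy for Player~II in the reduction game $G(\caco \to \cac)$, mirroring that proof move by move. Player~I opens with a $\cac$-instance $X_0 = (P,\le_P)$. Player~II replies with the $\omega$-ordered poset $\le_+ = \{(x,y) : x \le_P y \land x \le y\}$, which is $\Delta^0_0$-definable over $P$ and thus computable from $X_0$; this is a legal first move. Player~I must now return a solution $X_1$ to $\le_+$. If $X_1$ is a chain in $\le_+$, then it is already a chain in $\le_P$, so Player~II wins on the next move by playing $X_1$ itself, which is trivially computable from $X_0 \join X_1$. If instead $X_1$ is an infinite antichain in $\le_+$, Player~II plays the $\omega$-ordered poset $\le_- \restriction (X_1 \times X_1)$, where $\le_- = \{(y,x) : x \le_P y \land y \le x\}$; this is infinite and computable from $X_0 \join X_1$, hence a legal move. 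Player~I returns a solution $X_2$: if $X_2$ is a chain in $\le_-$ it is a chain in $\le_P$; if $X_2$ is an antichain in $\le_-$ then, since $X_2 \subseteq X_1$ and $X_1$ is an antichain in $\le_+$, every pair drawn from $X_2$ is incomparable in both $\le_+$ and $\le_-$, hence incomparable in $\le_P$, so $X_2$ is an antichain in $\le_P$. In either case Player~II wins by playing $X_2$, which is computable from $\Join_{i \le 2} X_i$.

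Since each play of the game ends with Player~II winning after at most two appeals to $\caco$, and since every one of Player~II's moves is obtained by a fixed Turing functional applied to the join of the moves played so far, this strategy is computable and winning; this establishes $\cac \gwred \caco$ and, with the trivial direction, the stated equivalence. I do not expect a genuine obstacle here. The only subtlety worth flagging is that the case split of Theorem~\ref{caco-rm} --- whether Player~I's first solution is a chain or an antichain --- need not be decided in advance, because the game format lets Player~II read off $X_1$ before choosing its second $\caco$-instance; it is exactly this feature that the generalized reduction, as opposed to a single-use Weihrauch reduction, is designed to exploit.
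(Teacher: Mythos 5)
Your proof is correct and follows essentially the same route as the paper: a winning computable strategy for Player II in $G(\caco \to \cac)$ that plays $\le_+$ and then $\le_- \restriction (X_1 \times X_1)$, mirroring the uniform proof of Theorem~\ref{caco-rm}, together with the trivial converse. The only cosmetic difference is that you case-split and win immediately when the first returned solution is a chain, whereas the paper's strategy simply plays the second $\omega$-ordered instance in all cases; both are fine.
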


\begin{proof}
    To see $\cac \gwred \caco$, consider the following strategy for $G(\caco \to \cac)$.
    Given $(P,\le_P)$, an instance of $\cac$, compute $\le_+$ and play the infinite $\omega$-ordered poset $(P, \le_+)$.
    Now a solution to $(P, \le_+)$ is an infinite subset $X \subseteq P$.
    Given $X$ and $(P, \le_P)$, compute and play the infinite $\omega$-ordered poset $Q = (X, \le_- \restriction(X \times X))$.
    Any solution of $Q$ is identically a solution to $(P,\le_P)$ by the proof of Theorem \ref{caco-rm}.
    Thus, this strategy is winning.

    The reverse direction is trivial as every instance of $\caco$ is an instance of $\cac$.
    Hence $\caco \swred \cac$ and in particular $\caco \gwred \cac$.
\end{proof}

In light of corollary \ref{scaco-rm}, we may utilize a similar argument to obtain:

\begin{cor}\label{scaco-gwequiv}
     $\scac \gwequiv \scaco$.
\end{cor}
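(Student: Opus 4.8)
The plan is to mirror the proof of Theorem \ref{caco-gwequiv} essentially verbatim, replacing each appeal to Theorem \ref{caco-rm} by the corresponding appeal to Corollary \ref{scaco-rm} so that stability is tracked along the way. The reverse direction $\scaco \swred \scac$ (and hence $\scaco \gwred \scac$) is immediate, since every $\scaco$-instance is by definition a $\scac$-instance; all the work is in showing $\scac \gwred \scaco$.

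For $\scac \gwred \scaco$, I would describe a computable winning strategy for Player II in $G(\scaco \to \scac)$ that plays at most two $\scaco$-instances. When Player I opens with a stable poset $(P,\le_P)$, Player II computes the $\Delta^0_0$-definable relation $\le_+$ and plays $(P,\le_+)$; by the argument of Corollary \ref{scaco-rm} this is an infinite $\omega$-ordered stable poset, hence a bona fide $\scaco$-instance, and $\le_+ \tred P$. If Player I replies with a set $X$ that is a chain in $\le_+$, then $X$ is already a chain in $\le_P$ and Player II wins by replaying $X$. Otherwise $X$ is an infinite antichain in $\le_+$, and Player II computes and plays $Q = (X, \le_- \restriction (X\times X))$, which is $(P \join X)$-computable. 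Any solution $Z$ that Player I then plays to $Q$ is, by the analysis in the proof of Theorem \ref{caco-rm}, a chain or antichain in $\le_P$ — here using that $X$ is an antichain in $\le_+$ — so Player II wins by replaying $Z$. Each move of Player II is produced by a fixed functional from the join of Player I's moves so far, so the strategy is computable; combining this with the easy reverse direction yields $\scac \gwequiv \scaco$.

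The only point that requires an argument beyond the reduction-game bookkeeping is that the second poset $Q = (X, \le_- \restriction(X \times X))$ is again a legitimate $\scaco$-instance. It is an infinite poset and, being a restriction of the $\omega$-ordered relation $\le_-$, is $\omega$-ordered, so it remains only to check stability. Corollary \ref{scaco-rm} tells us that $(P,\le_-)$ is stable of the small type, and I would observe that both smallness and isolation descend to every infinite subset: if $x \in X$ satisfies $x \le_- y$ (resp.\ $x \mid_{\le_-} y$) for all $y \in P$ outside a finite set $F$, then the same holds for all $y \in X$ outside the finite set $F \cap X$. Hence every element of $Q$ is small or isolated in $Q$, so $Q$ is stable of the small type. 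I do not expect a real obstacle here: the entire content is the routine verification that the uniform two-step procedure underlying Theorems \ref{caco-rm} and \ref{caco-gwequiv} preserves the stability hypothesis, which is precisely what Corollary \ref{scaco-rm} was designed to record.
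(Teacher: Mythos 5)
Your proposal is correct and matches the paper's intended argument: the paper proves this corollary exactly by rerunning the strategy of Theorem \ref{caco-gwequiv}, using the observation from Corollary \ref{scaco-rm} that $\le_+$ and $\le_-$ are stable (hence legitimate $\scaco$-instances), with the trivial $\scaco \swred \scac$ direction for the converse. Your added verification that smallness and isolation pass to the infinite subset $X$, so that $(X, \le_- \restriction (X \times X))$ is again stable of the small type, is precisely the routine detail the paper leaves implicit.
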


\begin{rem}
    Though the equivalence of $\cac$ and $\caco$ (formalized in second-order arithmetic) over $\omega$-models of $\rca$ is implicit in Towsner \cite{towsner_constructing_2020}, this result is also a consequence of theorem \ref{caco-gwequiv}. Moreover, by corollary \ref{scaco-gwequiv}, we conclude this is true of the stable versions as well. That is, $\scac$ and $\scaco$ (formalized in second-order arithmetic) are equivalent over $\omega$-models of $\rca$.
\end{rem}



\subsection{Computable reductions}

We turn our attention to separating these variants of the chain-antichain principle.
To begin, we show that $\cac$ is not equivalent to $\caco$ with respect to computable reductions.
This follows immediately from Theorems \ref{herrmanns_result} and \ref{caco-dtwosoln}.

\begin{thm}\label{caco-nceq}
    $\cac \not \cred \caco$.
\end{thm}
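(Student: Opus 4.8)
The plan is to argue by contradiction, using Herrmann's poset $(H,\le_H)$ from Theorem \ref{herrmanns_result} as a ``hard'' instance of $\cac$: it is infinite and computable but has no $\delt02$ infinite chain and no $\delt02$ infinite antichain. Suppose $\cac \cred \caco$. Applying the reduction to $(H,\le_H)$ yields an instance $(Q,\le_Q)$ of $\caco$ with $(Q,\le_Q) \tred H$; since $H$ is computable, $(Q,\le_Q)$ is itself a computable infinite $\omega$-ordered poset. Moreover the reduction guarantees that from any solution $Y_Q$ to $(Q,\le_Q)$ one obtains a solution $Y_H$ to $(H,\le_H)$ with $Y_H \tred H \join Y_Q$; because $H$ is computable this says simply that any $\delt02$ solution to $(Q,\le_Q)$ produces a $\delt02$ infinite chain or antichain in $(H,\le_H)$.

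Now I would split into two cases according to whether $(Q,\le_Q)$ has an infinite computable chain. If it does, that chain is a computable --- in particular $\delt02$ --- solution to $(Q,\le_Q)$, so it produces a $\delt02$ infinite chain or antichain in $(H,\le_H)$, contradicting Theorem \ref{herrmanns_result}. If it does not, then Theorem \ref{caco-dtwosoln} applies directly to the infinite computable $\omega$-ordered poset $(Q,\le_Q)$ and furnishes an infinite $\delt02$ antichain in $(Q,\le_Q)$; this is again a $\delt02$ solution to $(Q,\le_Q)$, hence produces a $\delt02$ infinite chain or antichain in $(H,\le_H)$, again contradicting Theorem \ref{herrmanns_result}. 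Either way we reach a contradiction, so $\cac \not\cred \caco$.

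The argument has essentially no moving parts beyond recognizing that Theorems \ref{herrmanns_result} and \ref{caco-dtwosoln} dovetail exactly: the only way a computable $\omega$-ordered poset could fail to have a $\delt02$ solution would be to have no infinite computable chain, and Theorem \ref{caco-dtwosoln} rules precisely that out. The main (and minor) thing to be careful about is the Turing-degree bookkeeping --- using that $(Q,\le_Q) \tred H$ together with $H$ computable to conclude $(Q,\le_Q)$ is computable (so that Theorem \ref{caco-dtwosoln} is applicable), and using $H$ computable again to conclude $H \join Y_Q$ is $\delt02$ whenever $Y_Q$ is, so that the transferred solution $Y_H$ is $\delt02$ and contradicts Herrmann's theorem.
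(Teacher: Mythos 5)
Your proposal is correct and follows essentially the same argument as the paper: pull back Herrmann's poset through a hypothetical reduction, note the resulting $\caco$-instance is computable, and use Theorem \ref{caco-dtwosoln} to guarantee a $\delt02$ solution whose image under the backward map would be a $\delt02$ solution to $(H,\le_H)$, a contradiction. Your case split (computable chain vs.\ none) versus the paper's (computable solution vs.\ none) is an immaterial difference.
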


\begin{proof}
    Fix an infinite poset $(H,\le_H)$ satisfying the conclusion of Theorem \ref{herrmanns_result}.
    So $(H,\le_H)$ is a computable instance of $\cac$ with no $\delt02$ solutions.
    Suppose $(P,\le_P)$ is an infinite $\omega$-ordered poset computable from $(H,\le_H)$. 
    Note $(P,\le_P)$ is computable.
    If the instance $(P,\le_P)$ has a computable solution $X$, then $(P,\le_P) \oplus X$ is also computable and therefore cannot compute a solution to $(H,\le_H)$.
    If $(P,\le_P)$ has no computable solution, it must have a $\delt02$ solution $Y$.
    Again, $(P,\le_P) \oplus Y$ cannot compute a solution to $(H,\le_H)$. 
    In either case, we see that the computable reduction fails and the proof is complete.
\end{proof}


Interestingly, this phenomenon does not occur for the stable versions.
They are in fact equivalent with respect to a computable reduction.

\begin{thm}\label{scaco-ceq}
    $\scac \cequiv \scaco$.
\end{thm}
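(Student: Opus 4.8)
The plan is to establish $\scac \cequiv \scaco$ by proving each direction separately. The reduction $\scaco \cred \scac$ is immediate since every $\omega$-ordered stable poset is already a stable poset, so every instance of $\scaco$ is an instance of $\scac$ with the identity transformation. The substantive direction is $\scac \cred \scaco$, and here the idea is to mimic the reverse-mathematical proof of Corollary \ref{scaco-rm} but to track carefully what computations the solution-transformation requires. Given a stable instance $(P, \le_P)$ of $\scac$, we form $\le_+$ and $\le_-$ as in the proof of Theorem \ref{caco-rm}; both are computable from $(P,\le_P)$ and, by Corollary \ref{scaco-rm}, both are $\omega$-ordered stable posets, hence legitimate instances of $\scaco$.

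The key point making this work as a \emph{computable} (not merely generalized Weihrauch) reduction is that we do not want to be forced to make a second call to $\scaco$ depending on the first solution — a computable reduction allows only one instance of the target problem. To handle this, I would exploit stability: in a stable poset, a solution can be extracted more directly. Specifically, pass $\le_+$ to $\scaco$ and obtain an infinite solution $X$, which is either a $\le_+$-chain or a $\le_+$-antichain. If $X$ is a $\le_+$-chain it is a $\le_P$-chain and we are done, computably in $X$. If $X$ is a $\le_+$-antichain, then since $(P,\le_P)$ is stable, thinning within $X$ to an antichain in $\le_P$ should be achievable from $X \oplus (P,\le_P)$ by a $\Delta^0_2$-in-$X$ or even $X$-computable search, using the ``all but finitely many'' behavior of small/isolated (or large/isolated) elements: restricted to the $\le_+$-antichain $X$, the elements that were small in $P$ behave so that $\le_-$ on $X$ becomes tractable, and a greedy argument analogous to the proof of Theorem \ref{caco-dtwosoln} produces the desired $\le_P$-antichain computably from $X \oplus (P,\le_P)$. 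Thus $Y_{\scac} \tred (P,\le_P) \oplus X$, which is exactly what a computable reduction demands.

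The main obstacle I anticipate is the second case: showing that from a $\le_+$-antichain $X$ one can compute (from $X \oplus P$) an infinite $\le_P$-antichain without a second oracle call. One must verify that stability of $(P,\le_P)$ genuinely collapses the two-step argument of Theorem \ref{caco-rm} — intuitively, on a $\le_+$-antichain the only remaining $\le_P$-comparabilities are ``downward'' ones recorded in $\le_-$, and stability bounds how these can occur, so a direct greedy/search procedure suffices. Care is needed to treat the small-type and large-type cases separately (as in Corollary \ref{scaco-rm}), and to confirm the search terminates and yields an infinite set. If a purely $X\oplus P$-computable extraction is not available, the fallback is to note it is at worst $\Delta^0_2$ in $X\oplus P$, which still fits the definition of $\cred$ since $(P,\le_P)$ is computable and $\Delta^0_2$-in-$X\oplus P$ sets are computable from $(X\oplus P)'$; but one should check the definition of $\cred$ only permits $Y_{\scac}\tred X_{\scac}\join Y_{\scaco}$, so the $X$-computable (not $\Delta^0_2$) extraction is what is actually required, and establishing it is the crux.
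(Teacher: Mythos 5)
Your overall plan — one call to $\scaco$ on a computable-in-$P$ $\omega$-ordered instance, followed by an $(X \join P)$-computable thinning of the returned solution — is exactly the paper's strategy, and your trivial direction and your observation that a second oracle call is forbidden are both correct. But the proposal has a genuine gap precisely at the point you yourself flag as "the crux": you never actually prove that an infinite $\le_P$-solution can be extracted from a $\le_+$-antichain $X$ computably in $X \join P$. The tool you point to, "a greedy argument analogous to the proof of Theorem \ref{caco-dtwosoln}," cannot do this job: that construction consults the $\pie01$ set of elements with no $\le_P$-successor and only yields a $\delt02$ antichain, which (as you correctly note) does not meet the definition of $\cred$. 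The paper closes this gap with a different, concrete construction: define $R(m,n)$ to hold iff $m \le_Q n \leftrightarrow m \le_P n$, observe that in the small-type case each $m$ has only finitely many $n$ with $R(m,n)$ false (this is where stability is actually used), and then build a nested sequence $X_0 \supseteq X_1 \supseteq \cdots$ where $X_{n+1}$ keeps the first $n+2$ elements of $X_n$ together with all later elements $R$-related to the $(n{+}1)$st; the limit $Y = \bigcap_n X_n$ is infinite, is a solution to $\le_P$, and is computable from $X \join \le_P$ because $n \in Y$ iff $n \in X_n$. Some argument of this kind is indispensable, and "should be achievable" does not supply it.

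A second problem is your handling of the large type. You say the two types need separate treatment "as in Corollary \ref{scaco-rm}," but that corollary uses \emph{two} successive applications (to $\le_+$ and then $\le_-$), which is exactly what a computable reduction disallows, and your described procedure always sends $\le_+$. In the large-type case this is the wrong instance: every element of $P$ becomes isolated in $\le_+$, so the $\scaco$-solution $X$ is merely an infinite set avoiding the finitely many upward comparabilities of each of its elements, and it may contain no infinite $\le_P$-antichain at all (for instance, $X$ may consist entirely of large elements, in which case the needed solution to $P$ is a chain that $X$ gives you no handle on). The paper instead spends its single bit of non-uniform information on the type and, in the large-type case, sends the positive part of the \emph{dual} order, $\le_Q\,=\{(m,n): m \le n \land n \le_P m\}$, with the predicate $R(m,n)$ adjusted accordingly, so that the small-type thinning argument applies verbatim. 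Your proposal needs both the explicit thinning construction and this case-dependent choice of forward instance to become a proof.
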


\begin{proof}
    It is immediate that $\scaco \cred \scac$ as every instance of $\scaco$ is an instance of $\scac$.
    To see that $\scac \cred \scaco$, fix an instance $(P,\le_P)$ of $\scac$ and assume without loss of generality that $P = \omega$.
    To proceed, we consider two cases according to whether $\le_P$ is of the small type or of the large type.
    While the two cases are entirely symmetric, the distinction is necessary for the following construction.

    Assume $\le_P$ is a stable poset of the small type.
    Define $\le_Q$ to be the largest subset of $\le_P$ respecting the natural order on $\omega$. So
        \(
            \le_Q\, = \{(m,n) : m \le n \land m \le_P n\}.
        \)
    Note $\le_Q$ is computable from $\le_P$ and an instance of $\scaco$.
    Apply $\scaco$ to obtain an infinite set $X$ which is a solution of $\le_Q$.
    If $X$ is a chain in $\le_Q$, then it is chain in $\le_P$, and therefore a solution.
    If, however, $X$ is an antichain, it may not be a solution of $\le_P$.
    Thus we $(X \join \le_P)$-computably thin $X$ so that in either case it is a solution of $\le_P$.

    Define the predicate $R(m,n)$ to be
        \[
            m \le_Q n \leftrightarrow m \le_P n.
        \]
    So $R(m,n)$ holds if $\le_Q$ preserves the correct relationship between $m$ and $n$ and $R(m,n)$ fails only if $m \mid_Q n$ but $m \nmid_P n$.
    With $X = \{x_0 < x_1 < x_2 < \cdots\}$, define
        \[
            X_0 = \{x_i \in X : R(x_0,x_i))\}.
        \]
    Because $\le_P$ is stable and of the small type $\le_Q$ misses at most finitely many comparabilities for each integer. So, since $X$ is infinite, $X_0$ is as well.

    Assume we have built $X_n = \{x^n_0 < x^n_1 < x^n_2 < \cdots\}$ where $x^n_m$ is the $m$th element of $X_n$ in increasing order. 
    Construct $X_{n + 1} \subseteq X_n$ by setting 
        \[
            X_{n + 1} = \{x^n_i : i \le n + 1\} \cup \{x^n_i : i > n + 1 \land R(x^n_{n + 1}, x^n_i)\}.
        \]
    Again, by the stability of $\le_P$, $X_{n + 1}$ is infinite.
    
    Let $Y = \lim_n X_n = \bigcap_{n \in \omega} X_n$.
    Note for all $x<y$ in $Y$ we have that $R(x,y)$ holds.
    Also, note $Y$ is an infinite subset of $X$. Indeed, for all $n$, 
    $$|Y| \ge \left|\bigcap_{i \le n} X_i \right| \ge n + 1$$ because in particular
        \[
            \{x_0 < x^0_1 < x^1_2 < \dots < x^n_{n+1}\} \subseteq \bigcap_{i = 0}^{n} X_i.
        \]
    Because $Y \subseteq X$, $X$ is a solution to $\le_Q$, and for all $x<y$ in $Y$, $R(x,y)$ holds, we conclude $Y$ is a solution to $\le_P$.
    
    To complete in the proof in the first case, we need show that $Y \tred (X \oplus \le_P)$. 
    To see this, note $n \in Y$ if and only if $n \in X_n$ as at the $n$th stage of the construction,
    and since each $X_n \tred (X \oplus \le_P)$, we have $Y \tred (X \oplus \le_P)$.

    The second case is symmetric: when $\le_P$ is of the large type, define $\le_Q$ by
        \[
            \le_Q \, = \{(m,n) : m \le n \land n \le_P m\}
        \]
    let $R(m,n)$ be the predicate
        \[
            m \le_Q n \leftrightarrow n \le_P m.
        \]
    Now, repeat the construction to complete the proof.
\end{proof}

\begin{rem}
    In the latter case, we call $(P, \le_Q)$ the {\it dual order} of $(P,\le_P)$. Put another way, we say $(P,\le_Q)$ is the {\it dual order} of $(P,\le_P)$ if for all $x,y \in P$, we have $$x \le_P y \leftrightarrow y \le_Q x.$$
\end{rem}

\subsection{Uniformity and stability}

The proof of theorem \ref{scaco-ceq} is very nearly a uniform construction.
The single bit of non-uniform information used in this proof was whether or not the stable poset $(P,\le_P)$ is of the small or large type.
With this data given, the above construction would uniformly find solutions to instances of $\scac$ using $\scaco$ in an effective manner.
This motivates the definition of the following problems.

\begin{dfn}
    The following instance-solution problems are variants of $\scac$.
    \begin{itemize}[align=left]
        \item[$\scacs$]{ is the problem $\scac$ restricted to instances of the small type.}
        \item[$\scacl$]{ is the problem $\scac$ restricted to instances of the large type.}
        \item[$\scact$]{ is the problem with instances of the form $(P,\le_P,T)$ where $(P,\le_P)$ is a stable poset and $T \in \{S,L\}$ and $T = S$ (or $L$) implies $(P,\le_P)$ is of the small (or large) type, and solutions which are either an infinite chain or an infinite antichain of $(P,\le_P)$.}
    \end{itemize}
\end{dfn}

\begin{prop}\label{cases}
    $\scacs \swequiv \scacl \swequiv \scact$
\end{prop}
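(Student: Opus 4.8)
The plan is to establish the three strong Weihrauch equivalences by exhibiting the required pairs of fixed Turing functionals $(\Phi,\Psi)$ in each direction, exploiting the fact that the small-type and large-type cases are related by passing to the dual order, and that the ``type tag'' $T$ in $\scact$ is exactly the one bit of non-uniform information identified in the remark following Theorem \ref{scaco-ceq}.

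First I would handle $\scacs \swequiv \scacl$. For $\scacs \swred \scacl$, given an instance $(P,\le_P)$ of $\scacs$, let $\Phi$ output the dual order $(P,\le_Q)$ with $x \le_Q y \leftrightarrow y \le_P x$; this is uniformly computable from $\le_P$, and since $(P,\le_P)$ is of the small type, $(P,\le_Q)$ is of the large type, hence a genuine $\scacl$-instance. A solution $Y$ to $(P,\le_Q)$ is an infinite chain or antichain of $\le_Q$; but chains (resp.\ antichains) of $\le_Q$ are exactly chains (resp.\ antichains) of $\le_P$, so $\Psi$ can simply return $Y$ unchanged. Thus the same $Y$ witnesses $\swred$ (we need only $Y$, not $Y \join X_{\sf P}$). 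The reduction $\scacl \swred \scacs$ is identical with the roles of small and large interchanged.

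Next I would treat $\scact$. The reductions $\scacs \swred \scact$ and $\scacl \swred \scact$ are immediate: $\Phi$ maps $(P,\le_P)$ to $(P,\le_P,S)$ (resp.\ $(P,\le_P,L)$), which is a valid $\scact$-instance precisely because the input was promised to be of the small (resp.\ large) type, and $\Psi$ is the identity on solutions since $\scact$ and $\scac$ share the same notion of solution. For the converse $\scact \swred \scacs$, given an instance $(P,\le_P,T)$, let $\Phi$ act as follows: if $T = S$, output $(P,\le_P)$ itself; if $T = L$, output the dual order $(P,\le_Q)$, which is then of the small type. In both cases $\Psi$ returns the solution unchanged, using the same observation that chains and antichains are preserved (under the identity map when $T=S$, and under the order-reversing identity on elements when $T=L$). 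Note that reading the tag $T$ off the instance $(P,\le_P,T)$ is a uniform computable operation, so $\Phi$ really is a fixed functional. The reduction $\scact \swred \scacl$ is symmetric. Chaining these through $\scacs \swequiv \scacl$ gives all three equivalences.

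The only point requiring a little care --- and the main obstacle, such as it is --- is verifying that in every case the output of $\Phi$ is a \emph{valid} instance of the target problem, i.e.\ that the type promise is actually met: this is where the hypothesis on the source problem (that the input is of the declared type, or carries a correct tag) is used, and where the elementary observation that dualizing a small-type stable poset yields a large-type stable poset (and vice versa) does the work. Everything else is the trivial verification that the element-identity map (possibly composed with order reversal) sends chains to chains and antichains to antichains, so that $\Psi$ may be taken to depend only on $Y_{\sf Q}$, which is exactly what strong Weihrauch reducibility demands.
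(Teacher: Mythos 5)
Your proposal is correct and follows essentially the same route as the paper: pass to the dual order to interchange small and large types, read off or append the type tag $T$ uniformly for $\scact$, and observe that solutions transfer unchanged, so the backward functional can be the identity, giving strong Weihrauch reductions in all directions.
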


\begin{proof}
    The dual order of any instance of $\scacs$ is an instance of $\scacl$, and vice versa. Similarly, any instance of $\scact$ can be uniformly transformed into an instance of $\scacs$ ($\scacl$) according to whether $T = S$ ($L$) or not. In all cases, the solution to the uniformly computed instance is identically a solution to the original instance.
\end{proof}

\begin{cor}\label{scaco-weq}
    $\scaco \wequiv \scacs \wequiv \scacl \wequiv \scact$
\end{cor}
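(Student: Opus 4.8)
The plan is to reduce the whole chain of equivalences to a single statement, namely $\scaco \wequiv \scacs$ (equivalently $\scaco \wequiv \scact$). Indeed, Proposition \ref{cases} already gives $\scacs \swequiv \scacl \swequiv \scact$, strong Weihrauch reducibility implies Weihrauch reducibility, and $\wred$ is transitive; so once $\scaco$ is slotted into one node of that cycle under $\wequiv$, all four problems are Weihrauch equivalent.

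For the easy direction, $\scaco \swred \scacs$ (hence $\scaco \wred \scacs$): by the Remark that every $\omega$-ordered stable poset is a stable poset of the small type, each instance $(P,\le_P)$ of $\scaco$ is verbatim an instance of $\scacs$, and each of its solutions qua $\scacs$-instance is a solution qua $\scaco$-instance. The identity functionals witness the reduction. (If one prefers to route through $\scact$, the functional sending $(P,\le_P)$ to $(P,\le_P,S)$ and the identity on solutions witness $\scaco \swred \scact$ just as directly.) For the harder direction, $\scacs \wred \scaco$, I would simply read off the two functionals from the first case of the proof of Theorem \ref{scaco-ceq}. Given an instance $(P,\le_P)$ of $\scacs$ — so $\le_P$ is of the small type — let $\Phi$ compute $\le_Q \,=\, \{(m,n) : m \le n \land m \le_P n\}$, which is an infinite $\omega$-ordered stable poset of the small type and hence an instance of $\scaco$. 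Given a solution $X$ to $\le_Q$, let $\Psi$ work from $X \join \le_P$: it computes the predicate $R(m,n) \equiv (m \le_Q n \leftrightarrow m \le_P n)$ (which is $\le_P$-computable), forms $X_0 = \{x_i \in X : R(x_0,x_i)\}$, iterates $X_{n+1} = \{x^n_i : i \le n+1\} \cup \{x^n_i : i > n+1 \land R(x^n_{n+1},x^n_i)\}$, and outputs $Y = \bigcap_{n} X_n$. Theorem \ref{scaco-ceq} establishes that $Y$ is an infinite solution to $\le_P$ and that $Y \tred X \join \le_P$ via the uniform rule $n \in Y \iff n \in X_n$; thus $\Psi^{\le_P \join X}$ is a solution to the original instance, and $\scacs \wred \scaco$. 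Combining, $\scaco \wequiv \scacs$, and with Proposition \ref{cases} and transitivity of $\wred$ the corollary follows.

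The only point that needs care — and the thing the preceding subsection was set up to supply — is that the construction in Theorem \ref{scaco-ceq} genuinely is carried out by fixed Turing functionals once we are handed the type of the poset: the sole non-uniformity in that proof was the small-versus-large case split, and restricting to $\scacs$ (or tagging the instance, as in $\scact$) removes it. I do not anticipate a real obstacle here; the argument is essentially bookkeeping on top of Theorem \ref{scaco-ceq} and Proposition \ref{cases}.
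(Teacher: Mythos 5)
Your proposal is correct and follows the paper's own proof: the paper likewise establishes $\scaco \wequiv \scacs$ by repeating the first case of the proof of Theorem \ref{scaco-ceq} (noting the backward functional needs $\le_P \join X$, which is why the equivalence is Weihrauch rather than strong Weihrauch), and derives the remaining equivalences from Proposition \ref{cases}. Your added observations — that the identity functionals give the easy direction since every $\omega$-ordered stable poset is of the small type, and that restricting to the small type removes the sole non-uniformity — are exactly the points the paper relies on.
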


\begin{proof}
    To show $\scaco \wequiv \scacs$, repeat the proof of the first case of Theorem \ref{scaco-ceq}. The other equivalencies follow from Proposition \ref{cases}.
\end{proof}

\begin{rem}
Because the proof of Theorem \ref{scaco-ceq} requires the original instance of $\scac$ in the computation of its solution, we do not obtain strong Weihrauch equivalences in Corollary \ref{scaco-weq}.
\end{rem}


\begin{thm}\label{poset-flip}
    $\scac \scequiv \scacs$.
\end{thm}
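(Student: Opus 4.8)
The plan is to exploit the non-uniformity of strong computable reducibility: unlike a (strong) Weihrauch reduction, such a reduction only requires that for \emph{each} $\P$-instance there \emph{exist} a suitable $\Q$-instance, so we may branch on the type of the given stable poset and reduce everything to facts already in hand. The direction $\scacs \scred \scac$ is immediate, since every instance of $\scacs$ is an instance of $\scac$ with the same solutions, so $X_{\Q} = X_{\P}$ together with the identity on solutions witnesses the reduction.

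For $\scac \scred \scacs$ I would fix an instance $(P,\le_P)$ of $\scac$. Being a stable poset, it is of the small type or of the large type (and if both, I simply treat it as small). If it is of the small type, then $(P,\le_P)$ is already an instance of $\scacs$: take $X_{\Q} = (P,\le_P)$, and any solution $Y$ of $X_{\Q}$ is literally a solution of $X_{\P}$, so $Y \tred Y$ gives the required solution transformation. If it is of the large type, then $(P,\le_P)$ is an instance of $\scacl$, and Proposition \ref{cases} supplies $\scacl \swred \scacs$, hence $\scacl \scred \scacs$; in particular, for this instance there is a $\scacs$-instance $X_{\Q} \tred (P,\le_P)$ whose solutions compute solutions of $(P,\le_P)$. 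Assembling the two cases yields $\scac \scred \scacs$, and with the trivial direction, $\scac \scequiv \scacs$. If one prefers a self-contained treatment of the large-type case rather than a citation, it can be handled directly by passing to the dual order $(P,\le_Q)$ with $x \le_Q y \leftrightarrow y \le_P x$: this is $\le_P$-computable, is of the small type because dualizing turns large elements into small ones and fixes isolated ones, and has exactly the same chains and antichains as $\le_P$, so any solution is identically a solution of the original instance.

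I do not anticipate a genuine obstacle. The only point worth stressing is conceptual rather than technical: a strong computable reduction is permitted to absorb the single bit of non-uniform information ``small type or large type'' when choosing the $\scacs$-instance -- precisely the bit whose use is flagged in the remark after Corollary \ref{scaco-weq} as the reason no \emph{strong Weihrauch} equivalence is obtained there. Once this is recognized, all remaining verifications (that the small-type restriction of $\scac$ is $\scacs$ by definition, that the large-type restriction is $\scacl$, and that $\scacl$ dualizes onto $\scacs$) are routine and have already been carried out in the proof of Proposition \ref{cases} and the dual-order remark.
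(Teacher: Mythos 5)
Your proposal is correct and follows essentially the same route as the paper: the small-type case is handled by the identity, and the large-type case by passing to the dual order (which the paper does directly, and which is also exactly what underlies Proposition \ref{cases}), with the type bit absorbed non-uniformly as strong computable reducibility permits. No gaps.
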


\begin{proof}
    Trivially, we obtain $\scacs \scred \scac$.
    To show $\scac \scred \scacs$, suppose $(P,\le_P)$ is an instance of $\scac$.
    If $(P,\le_P)$ is of the small type, we may witness this reduction with the identity functional.
    If $(P, \le_P)$ is of the large type, we compute from $(P,\le_P)$ the \emph{dual} partial order $(P,\le_P')$, i.e., $\le_P'$ is defined by
        \[
            x \le_P' y \mbox{ if and only if } y \le_P x.
        \]
    We claim $(P, \le_P')$ is a stable poset of the small type.
    Indeed, if $x$ is $\le_P$-isolated, then clearly $x$ is $\le_P'$-isolated.
    If $x$ is $\le_P$-large, then there is a $t$ such that if $y > t$, then $x \ge_P y$.
    This implies that for each $y > t$, $x \le_P' y$.
    So $x$ is $\le_P'$-small.
    This verifies the claim.

    Apply $\scacs$ to $(P,\le_P')$ to obtain an infinite set $X \subseteq P$ which is either a chain or antichain in $\le_P'$.
    If $X$ is an antichain in $\le_P'$, it is an antichain in $\le_P$.
    If $X$ is a chain in $\le_P'$, it a chain $\le_P$ in the opposite direction.
    In either case, $X$ is identically a solution of $(P, \le_P)$.
\end{proof}

\begin{cor}\label{scac-sceq}
    $\scac \scequiv \scacs \scequiv \scacl \scequiv \scact$.
\end{cor}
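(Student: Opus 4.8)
The plan is to chain together the equivalences already established rather than prove anything new from scratch. The corollary asserts that all four of $\scac$, $\scacs$, $\scacl$, and $\scact$ are pairwise equivalent under strong computable reducibility, so it suffices to establish enough links to connect all four nodes. Theorem \ref{poset-flip} gives $\scac \scequiv \scacs$ directly, so that edge is free. Proposition \ref{cases} gives $\scacs \swequiv \scacl \swequiv \scact$, and since strong Weihrauch reducibility implies strong computable reducibility (the left diagram in Figure \ref{ct_reductions}, following the arrow $\swred \Rightarrow \scred$), we immediately upgrade those to $\scacs \scequiv \scacl \scequiv \scact$.

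Concretely, I would write: ``By Theorem \ref{poset-flip}, $\scac \scequiv \scacs$. By Proposition \ref{cases}, $\scacs \swequiv \scacl \swequiv \scact$, and since strong Weihrauch reducibility implies strong computable reducibility (Figure \ref{ct_reductions}), we have $\scacs \scequiv \scacl \scequiv \scact$. Combining these, $\scac \scequiv \scacs \scequiv \scacl \scequiv \scact$.'' Since $\scequiv$ is an equivalence relation (transitivity of $\scred$ is routine from composition of the transformations and Turing functionals), the chain closes up into the full set of pairwise equivalences.

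There is essentially no obstacle here, as the statement is a bookkeeping consequence of two prior results plus the known implication diagram among reductions; the only thing to be careful about is invoking the correct direction of the implication $\swred \Rightarrow \scred$ and noting that $\scequiv$ is genuinely transitive. If one wanted to be fully self-contained one could instead observe that the dual-order transformation used in both Theorem \ref{poset-flip} and Proposition \ref{cases} is the identity-or-dual map, which is its own inverse and requires no access to the solution beyond relabeling, so each individual reduction is witnessed by a single functional on solutions — but citing the prior results is cleaner and is what I would do.
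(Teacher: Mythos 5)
Your argument is correct, and it reaches the conclusion by a slightly different decomposition than the paper. The paper's own proof takes Theorem \ref{poset-flip} as the starting point and then exhibits the two remaining reductions directly: $\scac \scred \scact$ via a (non-uniformly chosen) forward functional that appends the correct type symbol $S$ or $L$ with the identity backward functional, and $\scac \scred \scacl$ by an argument symmetric to Theorem \ref{poset-flip} (dualize when the instance is of the small type). You instead avoid constructing anything new: you take $\scac \scequiv \scacs$ from Theorem \ref{poset-flip}, upgrade the strong Weihrauch equivalences $\scacs \swequiv \scacl \swequiv \scact$ of Proposition \ref{cases} to strong computable equivalences via the implication $\swred \Rightarrow \scred$ recorded in Figure \ref{ct_reductions}, and close the chain by transitivity of $\scred$ (which, as you note, is routine). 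This is a legitimate and arguably more economical route, since the uniform reductions of Proposition \ref{cases} are strictly stronger than what is needed; what the paper's version buys in exchange is an explicit display of the witnessing functionals, which makes visible that the only non-uniform ingredient anywhere in the corollary is the single bit recording whether the instance is of the small or large type. Either write-up is acceptable; just make sure, as you do, that the upgrade is applied in the correct direction ($\swred$ implies $\scred$, not conversely).
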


\begin{proof}
    It remains to show that $\scac \scred \scacl$ and $\scac \scred \scact$.
    For $\scact$, take a forward functional which appends $S$ to $(P,\le_P)$ if it is of the small type, else take one that appends $L$; the backward functional is the identity.
    For $\scacl$, proceed symmetrically to the proof of Theorem \ref{poset-flip}.
\end{proof}

\begin{rem}
    In the next section, we separate $\scac$ and $\scaco$ over both Weihrauch and strong computable reducibility. That is, we prove $\scac \not \wred \scaco$ and $\scac \not \scred \scaco$.
    Hence Theorem \ref{scaco-ceq} and Corollaries \ref{scaco-weq} and \ref{scac-sceq} are all optimal.
\end{rem}

\section{Separating the stable versions}

In this section we prove that Theorem $\ref{scaco-ceq}$ cannot be improved with respect to the other computability-theoretic reductions.
That is, we show $\scac \not \wred \scaco$ and that $\scac \not \scred \scaco$.
In each case, a forcing argument is used to build an appropriate instance of $\scac$ which diagonalizes any possible Weihrauch (strong computable) reduction to $\scaco$.
We assume familiarity with forcing in arithmetic and utilize the definition of the forcing relation given in \S 3.2 of Shore \cite{shore_lecture_forcing_2016}. We recommend section 2.3 of Hirschfeldt \cite{hirschfeldt_slicing_2015}, chapter 3 of Shore \cite{shore_lecture_forcing_2016}, and chapter 7 of Dzhafarov and Mummert \cite{dzhafarov-mummert_rmprp_2022} for the needed background on forcing. 

The forcing notion we use is adapted from section 4 of Astor, Dzhafarov, Solomon, and Suggs \cite{astor_uniform_2016}.
The conditions used in this case are finite posets $\pi$ on an initial segment of $\omega$ along with an {\it assignment function} $a$ that locks each element in $\pi$ to a stable limit behavior.
Let $\mathrm{FinPO}$ be the set of all finite partial orders on initial segments of $\omega$.
For each $\pi \in \mathrm{FinPO}$, let $|\pi|$ be the greatest $n$ such that $\pi$ orders $\omega \restriction n$ and use $\le_\pi$ to denote the ordering.
So $\pi = (\omega \restriction |\pi|, \le_\pi) = (|\pi|, \le_\pi)$.
We say a poset $(P,\le_P)$ {\it extends} and is {\it compatible} with $\pi$ if $\omega \restriction |\pi| \subseteq P$ and for all $x,y < |\pi|$ we have $x \le_P y$ if and only if $x \le_\pi y$.
Thus if $\rho \in \mathrm{FinPO}$ extends $\pi$ then $|\rho| \ge |\pi|$.
We canonically code each finite set $\pi \in \mathrm{FinPO}$ to ensure the map $\pi \mapsto |\pi|$ is effective.

\begin{dfn}
    Let $\mathbb{P}$ be the following notion of forcing. 
    A {\it condition} is a pair $p = (\pi^p, a^p)$ as follows:
        \begin{itemize}
            \item {$\pi^p \in \mathrm{FinPO}$ where $\mathrm{FinPO}$ is the set of all partial orders on initial segments of $\omega$.}
            \item {$a^p$ is a map $|\pi^p| \to \{S,L,I\} \times (\omega \restriction |\pi^p| + 1)$ such that either for all $n < |\pi^p|, a^p(n) \in \{S,I\} \times (\omega \restriction |\pi^p| + 1)$ or for all $n < |\pi^p|, a^p(n) \in \{L, I\} \times (\omega \restriction |\pi^p| + 1)$;}
            \item {if $a^p(x) = (S,t)$ and $y \le_{\pi^p} x$, then $y < t$ and $a^p(y) = (S,u)$ for some $u$;}
            \item {if $a^p(x) = (L,t)$ and $x \le_{\pi^p} y$, then $y < t$ and $a^p(y) = (L,u)$ for some $u$;}
            \item {if $a^p(x) = (S,t)$ or $a^p(x) = (L,t)$ and $x \mid_{\pi^p} y$, then $y < t$; and }
            \item {if $a^p(x) = (I,t)$ and $x \le_{\pi^p} y$ or $y \le_{\pi^p} x$, then $y < t$.}
        \end{itemize} 
    A condition $q$ \emph{extends} $p$, written $q \le_{\ds P} p$, if $\pi^q$ extends $\pi^p$ and $a^p \subseteq a^q$. Two conditions $p,q \in P$ are \emph{parallel} if $\pi^p = \pi^q$.
    For any element $x < |\pi^p|$, we call the number $t$ given by $a^p(x)$ the {\it stabilization point} of $x$.
    The valuation $V : \mathbb{P} \to \omega \times \omega$ is given by $V(p) = \pi^p$.
\end{dfn}

We use $G$ to denote the stable poset given by a filter $\mathfrak{F} \subseteq \ds P$ (so $G = (\omega, \le_G) = \bigcup_{p \in \mathfrak{F}} \pi^p$) and as a name for the generic poset in the forcing language for $\ds P$.

\subsection{Separation over Weihrauch reducibility}

The goal of this subsection is to prove that $\scac$ is \emph{not} Weihrauch reducible to $\scaco$.
We require two lemmas to begin.
We first prove that at any point in a construction using $\ds P$ we may change our mind regarding which sort of poset will be built. 
For example, if $p \in \ds P$ forces $G$ to be of the small type, then there is a parallel condition $q$ such that $\pi^p = \pi^q$, and $q$ forces $G$ to be of the large type.
We next prove that any condition $p$ can be extended to an infinite computable poset $G$ without a computable chain or computable antichain. 

\begin{lem}\label{mindchange}
    If $p \in \ds P$ is such that the range of $a^p$ is contained in $\{S,I\} \times \omega \restriction |\pi^p| + 1$, then there is a parallel condition $q \in \ds P$ such that for all $x < |\pi^p|$
        \[
            (a^p(x) = (S,t) \to a^q(x) = (I,|\pi^p|)) \land (a^p(x) = (I,t) \to a^q(x) = (L,|\pi^p|)).
        \]
\end{lem}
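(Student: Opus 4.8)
The plan is to write $q$ down explicitly and then verify, bullet by bullet, that it is a legal condition of $\ds P$. Concretely, I would keep $\pi^q = \pi^p$ and define $a^q \colon |\pi^p| \to \{L,I\} \times (\omega \restriction |\pi^p| + 1)$ by setting $a^q(x) = (I, |\pi^p|)$ whenever $a^p(x) = (S,t)$ for some $t$, and $a^q(x) = (L, |\pi^p|)$ whenever $a^p(x) = (I,t)$ for some $t$. The hypothesis that the range of $a^p$ lies in $\{S,I\} \times (\omega \restriction |\pi^p| + 1)$ guarantees that every $x < |\pi^p|$ falls into exactly one of these two cases, so $a^q$ is well defined and manifestly has the property demanded in the statement. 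It is worth noting that $q$ is genuinely a new condition parallel to $p$ and not an extension of it, since in general $a^p \not\subseteq a^q$.

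Next I would run through the six bullets in the definition of a condition for this particular $q$; the first holds at once because $\pi^q = \pi^p \in \mathrm{FinPO}$. Most of the rest are routine and I would not belabor them: since $|\pi^p|$ is the largest element of $\omega \restriction |\pi^p| + 1$, resetting every stabilization point to $|\pi^p|$ automatically meets the codomain requirement of the second bullet and makes $a^q$ of the large type, while every clause demanding $y < t$ for the stabilization point $t$ of some element becomes trivial, since there $t = |\pi^p|$ and $y$ always ranges over the domain $\omega \restriction |\pi^p|$. The bullet governing elements of type $S$ is vacuous, as $S$ does not appear in the range of $a^q$. This disposes of the third, fifth, and sixth bullets, as well as the $y < t$ half of the fourth.

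The one place where real content enters is the remaining half of the fourth bullet: if $a^q(x) = (L, |\pi^p|)$ and $x \le_{\pi^q} y$, I still have to exhibit $u$ with $a^q(y) = (L, u)$; that is, I must know that $y$ too was isolated in $p$. Here I would use that $p$ is of the small type. Since $a^q(x) = (L, |\pi^p|)$ we have $a^p(x) = (I, s)$ for some $s$; if $a^p(y)$ were of the form $(S, w)$, then the $S$-bullet of the definition applied to $p$ (with $x \le_{\pi^p} y$) would force $a^p(x)$ to be of the form $(S, v)$, contradicting $a^p(x) = (I, s)$. Hence the first coordinate of $a^p(y)$ is $I$, and therefore $a^q(y) = (L, |\pi^p|)$, as required. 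Put differently, the crux --- and the only step that is not pure bookkeeping --- is the observation that in a small-type condition the set of isolated elements is closed upward under $\le_{\pi^p}$ (equivalently, the set of small elements is closed downward), which is exactly what the $S$-bullet guarantees. I do not anticipate any serious obstacle beyond remembering to invoke this closure property rather than treating the fourth bullet as mere bookkeeping.
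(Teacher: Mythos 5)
Your proposal is correct and follows essentially the same route as the paper: define $q$ by keeping $\pi^q = \pi^p$ and resetting $a^q(x)$ to $(I,|\pi^p|)$ or $(L,|\pi^p|)$ according to whether $a^p(x)$ was of type $S$ or $I$, observe that all $y<t$ clauses are trivial with $t=|\pi^p|$, and isolate the one substantive check, namely that the $L$-bullet's upward-closure requirement holds because in a small-type condition the isolated elements are closed upward under $\le_{\pi^p}$ (by the $S$-bullet applied in $p$, yielding a contradiction otherwise). This is exactly the paper's argument, including the contradiction used to verify the key claim.
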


\begin{proof}
    Fix $p$ and let $q = (\pi^p, a^q)$ with $a^q$ defined as follows: for each $x < |\pi^p|$ if $a^p(x) = (S,t)$ for some $t$ then $a^q(x) = (I, |\pi^p|)$, and if $a^p(x) = (I,t)$ then $a^q(x) = (L,|\pi^p|)$.
    We need to verify $q \in \mathbb{P}$. 
    It suffices to show that $a^q$ satisfies the required properties.
    This will complete the proof because $p$ and $q$ are clearly parallel.

    Fix an arbitrary $x < |\pi^p|$.
    Note $a^q(x) \ne (S,t)$ for any $t$.
    Suppose $a^q(x) = (I,t)$ for some $t$.
    Then $t = |\pi^p|$ by definition.
    So if $y \le_{\pi^p} x$ or $x \le_{\pi^p} y$ we have $y < |\pi^p|$ because $y \in \pi^p$, satisfying the two required properties.
    
    Suppose $a^q(x) = (L,t)$ for some $t$. 
    Again $t = |\pi^p|$ by definition.
    Moreover, $a^p(x) = (I,u)$ for some $u$.
    If for some $y$, we have $x \mid_{\pi^p} y$ then $y < t = |\pi^p|$ as needed.
    If for some $y$, we have $x \le_{\pi^p} y$, we claim $a^p(y) = (I,v)$ for some $v$ and therefore $a^q(y) = (L, |\pi^p|)$ satisfying the needed property.
    To verify the claim, suppose not: so by hypothesis $a^p(y) = (S,v)$ for some $v$.
    Since $x \le_{\pi^p} y$, we have $x < v$ and $a^p(x) = (S,w)$ for some $w$.
    But $a^p(x) = (I,u)$, a contradiction.
\end{proof}

\begin{lem}\label{nocompsoln}
    For any condition $p \in \ds P$, there is an infinite computable stable poset $G$ extending $\pi^p$ that does not contain an infinite computable chain or an infinite computable antichain.
\end{lem}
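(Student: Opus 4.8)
The plan is to build $G$ by a finite-extension (forcing) argument meeting a schedule of requirements, while never committing to values for the assignment function beyond what is forced — since the lemma only asks for the poset $G$, not for a generic filter, we can afford to work with the underlying $\mathrm{FinPO}$-conditions $\pi$ and simply carry along \emph{some} legal assignment $a$ witnessing membership in $\mathbb P$. Starting from $p$, fix the type (say small; the large case is dual, or apply Lemma \ref{mindchange}). We enumerate requirements
\[
R_e:\quad \text{if }\Phi_e\text{ is total and its range is infinite, then }\ran(\Phi_e)\text{ is neither a chain nor an antichain in }G,
\]
and we must meet $R_e$ for every $e$. Given the current condition $p_s = (\pi_s, a_s)$, to satisfy $R_e$ we search for two numbers $x < y$, both at least $|\pi_s|$, that $\Phi_e$ will enumerate into its range; concretely, we wait (in the construction, this waiting is what makes $G$ computable but not the requirements uniformly) until $\Phi_e$ enumerates two fresh elements $x<y$ above the current length, extend $\pi_s$ to a $\pi_{s+1}$ of length $>y$ by first declaring all the new points pairwise incomparable, and then — this is the one genuine choice — make $x$ and $y$ \emph{incomparable} if we are currently trying to kill "$\ran(\Phi_e)$ is a chain", or make $x <_{\pi_{s+1}} y$ if we are trying to kill "$\ran(\Phi_e)$ is an antichain". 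Doing \emph{both} for the same $e$ at two different stages (with two different pairs) guarantees $\ran(\Phi_e)$, if infinite, is neither.

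The key technical point is that each such extension can be carried out while staying inside $\mathbb P$ and preserving the small type. When we add new points $x_1 < \dots < x_k$ on top of $\pi_s$, we must give each a new assignment value. The safe default is to declare every new point \emph{isolated}: set $a_{s+1}(x_i) = (I, |\pi_{s+1}|)$ for each new $x_i$ — this is legal precisely because nothing below $|\pi_{s+1}|$ is yet comparable with $x_i$ except possibly other new points. If at some stage we instead want $x <_{\pi} y$ for two new points to break an antichain, we still set both to $(I,\cdot)$ \emph{provided} we do not thereby violate the isolation clauses; if we want genuinely to force a comparability involving points we have already assigned $(S,\cdot)$ or $(L,\cdot)$, we instead just ensure the freshly added pair $x<y$ uses two brand-new points so all their assignments are chosen from scratch. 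Because every requirement only ever needs to prescribe the relation on a pair of fresh points, and the assignment clauses of $\mathbb P$ only constrain points whose stabilization point has already been passed, each extension step produces a legitimate condition; the stabilization points can be pushed up to $|\pi_{s+1}|$ as we go, so in the limit every element of $\omega$ gets a well-defined small-or-isolated behavior, and $G = \bigcup_s \pi_s$ is a genuine stable poset of the small type extending $\pi^p$.

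Finally, $G$ is computable because the construction is effective: at stage $s$ we run $\Phi_e$ for $s$ steps looking for the two fresh witnesses, and either find them and extend, or do nothing and move on — this is a computable recursion producing a computable increasing sequence of finite posets with $\lim_s |\pi_s| = \infty$. If $\Phi_e$ is total with infinite range, then for the two stages assigned to $R_e$ the search succeeds (the range is infinite, so fresh large witnesses appear), so $\ran(\Phi_e)$ contains an incomparable pair and a comparable pair, hence is neither a chain nor an antichain in $G$; if $\Phi_e$ is partial or has finite range it is not an infinite chain or antichain trivially. I expect the main obstacle to be bookkeeping the assignment function $a$ so that the small-type clauses of $\mathbb P$ are never violated while still being free to add the comparability $x <_\pi y$ needed to destroy a would-be antichain — the resolution is the observation above that using fresh points for every diagonalization makes all the relevant assignment constraints vacuous at the moment of extension.
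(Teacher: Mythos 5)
Your overall strategy---a computable finite-extension construction that, for each index $e$, forces the candidate computable set to contain both a comparable pair and an incomparable pair---is the same as the paper's. But there is a genuine gap in how you handle the assignment function, and it concerns exactly the feature of the lemma that is used later. The clauses defining $\mathbb{P}$ do not only constrain the assignments of the new points: if some $x < |\pi^p|$ already has $a^p(x) = (S,t)$ (or $(L,t)$), then in any legal extension every new point $y \ge t$ must be strictly $\le_\pi$-above $x$ (resp.\ below $x$); it can be neither incomparable to $x$ nor a predecessor of $x$. Since $p$ is an arbitrary condition---and in the applications (Theorems \ref{scaco-no-swred} and \ref{potatoes}) the conditions fed into this lemma deliberately carry $S$- and $I$-assignments whose limit behavior must survive into $G$---your default extension step (``declare all the new points pairwise incomparable,'' assigned $(I,|\pi_{s+1}|)$) is not a condition of $\mathbb{P}$ whenever $\pi^p$ has small- or large-assigned elements, and the poset it produces would turn an element committed to being small into an isolated element of $G$. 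So the claim that ``each extension step produces a legitimate condition'' fails, and with it the property the lemma is really for: that $G$ is the union of a computable sequence of conditions extending $p$ (not merely a stable poset agreeing with $\pi^p$ on $|\pi^p|$). The repair is routine but must be stated: every new point is placed above all $S$-assigned elements (below all $L$-assigned ones) and may be left incomparable to $I$-assigned ones, which is consistent because predecessors of $S$-assigned elements are themselves $S$-assigned and lie below the relevant stabilization points. The paper sidesteps this bookkeeping by never building extensions by hand: at each stage it searches the finitely many conditions $q$ of bounded length extending the current one for witnesses $x \mid_{\pi^q} y <_{\pi^q} z$ already enumerated into the candidate set, so legality is part of the search.

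A secondary problem is scheduling. You assert that ``for the two stages assigned to $R_e$ the search succeeds,'' but you cannot predict when $\Phi_e$ will enumerate two fresh witnesses above the current length, and to keep $G$ computable the construction must move on after a bounded search. Each requirement therefore needs infinitely many opportunities, acting as soon as witnesses have appeared and it is not yet satisfied---this is exactly the paper's arrangement, where each $\mathscr{R}_e$ is addressed infinitely often with a growing search bound. With only two dedicated stages the diagonalization can simply be missed even though $\Phi_e$ is total with infinite range.
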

\begin{proof}
    Given $p$, we computably construct a sequence of $\ds P$ conditions $p =  p_0 \gep p_1 \gep \cdots$ such that $G = \bigcup_{n \in \omega} \pi^{p_n}$.
    To ensure $G$ is as desired we meet the requirements
        \begin{quote}
            $\mathscr{R}_e:$ If $\phi_e$ is the characteristic function of an infinite set, then 
                \[
                    \exists x,y,z,s \left((\phi_{e,s}(x) = \phi_{e,s}(y) = \phi_{e,s}(z) = 1) \land (x \mid_G y \land y <_G z) \right).
                \]
        \end{quote}
    Divide the stages of the construction up so that each requirement is addressed infinitely often.
    Suppose inductively we are at stage $t$ and given condition $p_n$.
    Assume $t$ is the $m$th stage dedicated to $\mathscr{R}_e$.
    If there are $x,y,z,s < |\pi^{p_n}|$ that witness $\mathscr{R}_e$ is satisfied, do nothing.
    Else, search for a condition $q$ with $|\pi^q| = |\pi^{p_n}| + m$ and $x,y,z,s < |\pi^q|$ such that  
        \[
            (\phi_{e,s}(x) = \phi_{e,s}(y) = \phi_{e,s}(z) = 1) \land (x \mid_{\pi^q} y \land y <_{\pi^q} z) .
        \]
    If such a condition exists, set $p_{n+1} = q$.
    Otherwise, set $p_{n+1} = p_n$ and proceed to the next stage.
    This completes the construction.

    Since only finitely many conditions $q$ are considered at each stage, the sequence $\langle p_n \rangle_{n \in \omega}$ and hence $G$ are computable.
    Clearly, every $\mathscr{R}_e$ is either vacuously satisfied or $m$ will grow sufficiently large to find witnesses $x,y,z$ and $s$ with which to satisfy it at some stage.
    Thus, if $\phi_e$ is the characteristic function of an infinite set, $\mathscr{R}_e$ guarantees it is not a chain or antichain in $G$.
\end{proof}


To separate $\scac$ and $\scaco$ with respect to Weihrauch reducibility, we first show that $\scac \not \swred \scaco$ and then appeal to a general fact about the proof to obtain the separation as a corrolary.
To show $\scac \not \swred \scaco$, we proceed by contradiction assuming there are fixed Turing functionals, $\Phi$ and $\Psi$, witnessing $\scac \swred \scaco$ and construct a condition $r$ that yields a ``Seetapun configuration:'' a collection of finite sets $F_0,\dots,F_n,E$ where at least one set must extend to a solution $Y$ of $\Phi^G$ such that $\Psi^{Y}$ cannot be a solution of $G$. 
This yields the needed contradiction.

\begin{thm}\label{scaco-no-swred}
            $\scac \not \swred \scaco$.
        \end{thm}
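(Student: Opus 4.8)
The plan is to argue by contradiction, assuming fixed Turing functionals $\Phi$ and $\Psi$ witness $\scac \swred \scaco$, and to construct via the forcing notion $\ds P$ a generic stable poset $G$ together with a finite collection of candidate sets that produces a ``Seetapun configuration,'' so that no matter which candidate the solution to $\Phi^G$ extends, $\Psi$ applied to that solution fails to be a solution of $G$. Since $\Phi^G$ must be an $\scaco$-instance (an $\omega$-ordered stable poset), its solutions come in two flavours --- infinite chains and infinite antichains --- and the key asymmetry we exploit is that an $\omega$-ordered poset is \emph{small type}, so the real tension is between chains of $\Phi^G$ and antichains of $\Phi^G$ mapping back, under $\Psi$, to chains or antichains of $G$. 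We will use Lemma~\ref{mindchange} to switch the forced global type of $G$ at a critical moment, and Lemma~\ref{nocompsoln} to complete any condition we reach into a legitimate computable stable poset $G$ with no computable chain or antichain, which guarantees that whatever finite sets we extract cannot themselves already decide a solution.

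The construction proceeds roughly as follows. First I would start with any condition $p_0$ and, working below it, search for finitely many conditions extending $p_0$ along which $\Phi$ enumerates enough of its output $\omega$-ordered poset $\Phi^G$ to force, for various candidate finite sets, that $\Psi$ commits to a definite finite initial segment of a purported solution of $G$. The aim is to force a situation where $\Phi^G$ is revealed to have (the beginnings of) an infinite antichain $E$ --- call its approximating finite piece $E$ --- together with several finite sets $F_0,\dots,F_n$ that are candidate pieces of chains in $\Phi^G$; by genericity and the structure of $\omega$-ordered posets one of these must genuinely extend to an infinite solution of $\Phi^G$. Then $\Psi$ applied to the relevant extension yields a set that must be claimed to be a chain or an antichain of $G$, and this is where we diagonalize: we extend the condition --- possibly first invoking Lemma~\ref{mindchange} to flip the type of $G$ --- so that $\pi^q$ explicitly refutes whichever of ``chain'' or ``antichain'' $\Psi$ has committed to, for \emph{every} candidate simultaneously. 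Finally, Lemma~\ref{nocompsoln} lets us finish $q$ into a full computable stable $G$, contradicting $\scac \swred \scaco$ because $G$ as an $\scac$-instance then has no solution computed from $\Psi$ on a solution of $\Phi^G$.

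The main obstacle I anticipate is orchestrating the combinatorics of the Seetapun configuration uniformly: one has to ensure that \emph{some} candidate set is forced to extend to an infinite solution of the $\omega$-ordered poset $\Phi^G$ --- not merely that each individually \emph{could} --- while simultaneously retaining enough freedom in the condition $\pi^q$ to sabotage $\Psi$'s verdict on all candidates at once. Because $\Phi^G$ is $\omega$-ordered, its infinite antichains are ``cheap'' (elements with no $\le$-successor in the generic), which is what forces at least one candidate to survive; the delicate point is that sabotaging $\Psi$ for an antichain-verdict requires creating a comparability in $G$ between two named elements, whereas sabotaging it for a chain-verdict requires creating an incomparability, and both must be done without violating the stabilization-point constraints built into the assignment functions $a^q$. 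This is precisely where Lemma~\ref{mindchange}, which lets us reinterpret small-type behaviour as isolated/large-type behaviour without changing $\pi^p$, buys the needed room, and where the bulk of the careful case analysis will lie.
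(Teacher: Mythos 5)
Your proposal follows the same skeleton as the paper's proof: assume $\Phi,\Psi$ witness $\scac \swred \scaco$, build a Seetapun-style configuration $F_0,\dots,F_n,E$ of finite candidate chains and a candidate antichain by forcing with $\ds P$, use Lemma \ref{mindchange} to flip the forced type at the end, and finish with Lemma \ref{nocompsoln}. But there is a genuine gap in the mechanism you propose for the diagonalization itself. You say you will extend the condition so that $\pi^q$ ``explicitly refutes whichever of chain or antichain $\Psi$ has committed to,'' by creating a comparability (against an antichain verdict) or an incomparability (against a chain verdict) between two named elements. This cannot work as stated: the witnesses $x,y$ extracted from a convergent computation $\Psi^{F}(x)\halts = \Psi^{F}(y)\halts = 1$ lie below $|\pi^q|$, and conditions of $\ds P$ only extend $\pi^q$, so the order relation between $x$ and $y$ in $G$ is already frozen and cannot be ``created'' later. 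Moreover $\Psi$ never commits to a verdict: its output merely has to be an infinite chain \emph{or} an infinite antichain, and a single local relation between $x$ and $y$ rules out at most one of these. The paper's actual lever is the assignment function: an isolated element of $G$ belongs to no infinite chain and a small (or large) element to no infinite antichain, so one sets $a(x)$ and $a(y)$ to make one witness isolated and the other small (then, after the mind-change, isolated/large), killing both possibilities simultaneously. You gesture at the stabilization-point constraints and at Lemma \ref{mindchange}, but you never state this limit-behavior diagonalization, and without it the refutation step fails.

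Relatedly, what you defer as ``where the bulk of the careful case analysis will lie'' is in fact the decisive content of the proof: the new pair $x,y$ obtained from $E$ may coincide with, or lie below the stabilization points of, the previously committed witnesses $x_i,y_i$, and one must show (splitting on $x <_{\pi} y$ versus $x \mid_{\pi} y$) that $x,y$ can be made both small or both isolated while respecting the earlier commitments, so that applying Lemma \ref{mindchange} (small becomes isolated, isolated becomes large) simultaneously diagonalizes $E$ and preserves the sabotage of every $F_i$, which is what keeps each $\max F_i$ forced isolated and hence guarantees $E$ extends to an infinite antichain of $\Phi^G$. Also missing is the argument for why suitable $F_i$ and witness pairs exist at each step: if they did not, $\Phi^G$ would have an infinite computable antichain, or $\Psi$ applied to some solution would define a finite set, contradicting Lemma \ref{nocompsoln}. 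So the skeleton and the choice of lemmas are right, but the core mechanism is misstated and the existence and compatibility arguments that make the configuration work are not supplied.
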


        \begin{proof}
            By way of contradiction, assume the functionals $\Phi$ and $\Psi$ witness $\scac \swred \scaco$.
            Thus given any infinite stable poset $\le_P$, we have that $\Phi^{\le_P}$ is an infinite $\omega$-ordered stable poset and for any infinite chain or infinite antichain $Y$ in $\Phi^{\le_P}$, we have that $\Psi^Y$ is either an infinite chain or infinite antichain in $\le_P$.
            To derive our final contradiction, we construct an infinite computable stable poset $G$ such that $\Phi^G$ contains either an infinite computable chain or an infinite antichain $Y$ where $\Psi^Y$ is not a solution to $G$.
            This is done by constructing a suitable condition $r$ and applying Lemma \ref{nocompsoln} to obtain $G$.

            Construct $r$ as follows. 
            Begin with any condition $p$ such that $|\pi^p|$ is sufficiently large to reveal a finite chain $F_0$ in $\Phi^{\pi^p}$ for which a pair $x_0,y_0 < |\pi^p|$ have that $\Psi^{F_0}(x_0)\halts=\Psi^{F_0}(y_0)\halts = 1$.
            Such a $p$ must exist, for if not, every condition $p$ forces any infinite chain $X$ in $\Phi^G$ to have that the set defined by $\Psi^X$ is finite, contradicting that $\Psi^X$ must be a solution to $G$.
            Therefore, $\Phi^G$ must not contain any infinite chains. 
            So, cofinitely many elements in $\Phi^G$ are isolated and thus $\Phi^G$ contains an infinite computable antichain $Y$.
            This again contradicts the supposition about $\Phi$ and $\Psi$ because, by Lemma \ref{nocompsoln}, $G$ has no computable solutions and so $\Psi^Y$ is not a solution to $G$.

            As the only property of $p$ needed to find $F_0$ is that $|\pi^p|$ sufficiently large, we can further require that $a^p(x_0) = (S,|\pi^p|)$ and $a^p(y_0) = (I, |\pi^p|)$ if $x_0 <_{\pi^p} y_0$, or $a^p(x_0) = (I, |\pi^p|)$ and $a^p(y_0) = (S,|\pi^p|)$ otherwise.
            Note then that $p$ forces $\max F_0$ isolated in $\Phi^G$ because if not, there is a condition $q \le_{\ds P}$ and a generic $G$, obtained from Lemma \ref{nocompsoln}, compatible with $q$ wherein $\max F_0$ is small in $\Phi^G$.
            So there must be an infinite chain $F_0 \cup X$ (with $F_0 < X$) in $\Phi^G$ (since if $\Phi^G$ has no infinite chain, it must contain a computable solution.)
            But by construction, $\Psi^{F_0 \cup X}$ will contain both a small and an isolated element and thus not be a solution to $G$, again contradicting our assumption on $\Psi$.
            
            Assume we are at condition $q_{n-1}$ and have found finite sets $F_0, \dots, F_{n-1}$ such that for each $i \le n - 1$
                \begin{itemize}
                    \item {$F_i$ is a chain in $\Phi^{\pi^{q_{n-1}}}$ and $q_{n-1}$ forces $\max F_i$ is isolated in $\Phi^G$;}
                    \item {$\min F_i \mid_{\Phi^{\pi^{q_{n-1}}}} \max F_j$ for all $j < i$ (to ensure the maxima form an antichain);}
                    \item {there is a pair $x_i,y_i$ such that $\min\{x_i,y_i\} > \max\{x_j,y_j : j < i\}$ and $\Psi^{F_i}(x_i) \halts = \Psi^{F_i}(y_i) \halts = 1$; and}
                    \item {$a^{q_{n-1}}(x_i) = (S,|\pi^{q_{n-1}}|)$ and $a^{q_{n-1}}(y_i) = (I,|\pi^{q_{n-1}}|)$ if $x_i <_{\pi^{q_{n-1}}} y_i$, or $a^{q_{n-1}}(x_i) = (I, |\pi^{q_{n-1}}|)$ and $a^{q_{n-1}}(y_i) = (S,|\pi^{q_{n-1}}|)$}
                \end{itemize}
            Find a condition $q_n \lep q_{n-1}$ for which there is a chain $F_n$ in $\Phi^{\pi^{q_n}}$ such that both $\Psi^{F_n}(x_n) \halts = \Psi^{F_n}(y_n) \halts = 1$ for some pair $x_n,y_n < |\pi^{q_n}|$ with $\min\{x_n,y_n\} > \max\{x_i, y_i : i < n\}$, and $\min F_n \mid_{\Phi^{\pi^{q_n}}} \max F_i$ for each $i < n$. Additionally, ensure $x_n$ and $y_n$ exceed the stablization points of all $x_i, y_i$ for $i < n.$
            As before, finding $F_n$ depends only on $|\pi^{q_n}|$ and not $a^{q_n}$, so by moving to a parallel condition if necessary we can ensure $q_n$ additionally satisfies that $a^{q_n}(x_n) = (S,|\pi^{q_n}|)$ and $a^{q_n}(y_n) = (I,|\pi^{q_n}|)$ if $x_n <_{\pi^{q_n}} y_n$, or $a^{q_n}(x_n) = (I, |\pi^{q_n}|)$ and $a^{q_n}(y_n) = (S,|\pi^{q_n}|)$ otherwise.

            Such a $q_n$ and $F_n$ must exist and this is verified with an argument similar to the base case: if not, suppose $G$ is compatible with $q_{n-1}$ and satisfies the conclusion of Lemma \ref{nocompsoln}.
            Note that $\max F_i$ is isolated in $\Phi^G$ for each $i < n$ and $\Phi^G$ must contain an infinite chain.
            Thus, there is some infinite chain $X$ with $\min X \mid_{\Phi^G} \max F_i$ for all $i < n$.
            Furthermore, for every initial segment $F$ of $X$, there is no sufficiently large pair $x_n,y_n$ which in the set defined by $\Psi^F$.
            And so $\Psi^X$ defines a finite set, and we again contradict the assumption on $\Psi$.

            Now, set $E = \{\max F_i : i \le n\}$.
            Note $E$ is an antichain in $\Phi^{\pi^{q_n}}$ and $q_n$ forces every element of $E$ isolated in $\Phi^G$.
            Without loss of generality, assume $n$ and $|\pi^{q_n}|$ are sufficiently large so that there is a pair $x,y < |\pi^{q_n}|$ with $\Psi^E(x) \halts = \Psi^E(y) \halts = 1.$
            Again, if no such $n$ exists, then $E$ can be extended to a solution $X$ such that $\Psi^X$ is finite, a contradiction.
            
            The final step in our construction is to move to a condition $r$ which simultaneously diagonalizes $E$ and each set $F_0, \dots, F_n$.
            Let $D = \{x_i,y_i : i \le n\}$ be the set of witnesses for $F_0,\dots,F_n$ and note that we cannot guarantee that $x$ and $y$ are not already contained in $D$ or that $x$ and $y$ are beyond the stabilization points of the elements in $D$.
            Thus we may require a condition $r$ parallel to $q_n$ such that $a^r$ diagonalizes each of $E,F_0, \dots, F_n$ by making $G$ a poset of the large type. 

            There are three cases: either $x <_{\pi^{q_n}} y$, $y <_{\pi^{q_n}} x$, or $x \mid_{\pi^{q_n}} y$.
            The first two are symmetric so we assume without loss of generality that $x <_{\pi^{q_n}} y$ or $x \mid_{\pi^{q_n}} y$.
            
            \begin{quote}
                \noindent
                {\bf Case 1:} $x <_{\pi^{q_n}} y$.
                If possible, move to a condition $r$ parallel to $q_n$ such that $a^r(y) = (I, |\pi^{q_n}|)$ and for each $z \in D$ we have $a^r(z) = a^{q_n}(z)$.
                If not, then since $y$ cannot be made isolated while respecting the current limit behaviors assigned to the elements in $D$ there must be some $s \in D$ such that $y <_{\pi^{q_n}} s$ and $a^{q_n}(s) = (S,t_1)$ for some $t_1$.
                We claim in this case $x$ can be made small, i.e. there is a parallel condition $r'$ with $a^r(x) = (S, |\pi^{q_n}|)$ and $a^r(z) = a^{q_n}(z)$ for each $z \in D$.
                If not, then similarly there is some element $i \in D$ such that $i <_{\pi^{q_n}} x$ and $a^{q_n}(i) = (I,t_2)$ for some $t_2$.
                But then by transitivity $i <_{\pi^{q_n}} s$, so $a^{q_n}(i) = (S,u)$ for some $u$, a contradiction.
                So there is a condition $r'$ parallel to $q_n$ which makes both $x$ and $y$ small, since $y$ cannot be isolated, and agrees with $q_n$ on the limit behavior of the elements in $D$.
                Apply Lemma \ref{mindchange} to $r'$ to obtain a parallel condition $r$ which makes every small element in $\pi^{q_n}$ isolated and every isolated element in $\pi^{q_n}$ large.
                Clearly, $r$ diagonalizes $E$ as $a^r$ makes both $x$ and $y$ isolated.
                To see that $r$ also diagonalizes $F_0, \dots, F_n$, fix $F_i$ for some $i \le n$.
                If $a^{q_n}(x_i) = (S,t)$ and $a^{q_n}(y_i) = (I,u)$ for some $t$ and $u$ then $x_i <_{\pi^{q_n}} y_i$.
                Since $a^r$ makes $x_i$ isolated and $y_i$ large, $\Psi^{F_i}$ still contains two comparable elements one of which is isolated.
                If on the other hand $a^{q_n}(x_i) = (I,t)$ and $a^{q_n}(y_i) = (S,u)$ then $y_i <_{\pi^{q_n}} x_i$ or $x_i \mid_{\pi^{q_n}} y_i$.
                As $a^r$ makes $x_i$ large and $y_i$ isolated, $\Psi^{F_i}$ either contains both comparable and isolated elements in the first case or both incomparable and isolated elements in the latter case.
                Hence $r$ is as desired.

                \noindent
                {\bf Case 2:} $x \mid_{\pi^{q_n}} y$.
                Here we need to make either $x$ or $y$ small so suppose this is not possible while respecting the limit behaviors of the elements in $D$.
                Then there is a condition $r'$ parallel to $q_n$ such that $a^{r'}(x) = a^{r'}(y) = (I,|\pi^{q_n}|)$ and $a^{r'}(z) = a^{q_n}(z)$ for all $z \in D$.
                Apply Lemma \ref{mindchange} to $r'$ to obtain a condition $r$ in every small element in $\pi^{q_n}$ is made isolated and every isolated element in $\pi^{q_n}$ is made large.
                As in Case 1, $r$ is the desired condition.
                In particular, $a^r$ makes $x$ and $y$ both large, so $\Psi^E$ contains two incomparable large elements, and simultaneously diagonalizes each of $F_0, \dots, F_n$.
            \end{quote}

            To conclude, note that the condition $r$ was found computably:
            indeed, the search for each $F_i$ had to succeed and we can computably extend conditions to conduct this search.
            Since for any condition $p$, the number of parallel conditions is finite, finding the correct limit behaviors for $r$ was computable as well.
            Thus, by Lemma \ref{nocompsoln}, there is an infinite computable stable poset $G$ extending $r$ with no computable solution.
            So $\Phi^G$ is an $\omega$-ordered stable poset and must contain an infinite antichain $E \cup X$ with $E < X$.
            And by construction, $\Psi^{E \cup X}$ contains two elements $x$ and $y$ such that 
            if $x$ and $y$ are comparable in $G$, then one of these elements is isolated, and
            if $x$ and $y$ are incomparable in $G$ then one of these elements is small if $G$ is of the small type, or one is large if $G$ is of the large type.
            Either way, $\Psi^{E \cup X}$ is not an infinite chain or infinite antichain in $G$.
            This contradicts the initial assumption that $\Phi$ and $\Psi$ witness $\scac \swred \scaco$ and the proof is complete.
        \end{proof}

To obtain the final result that $\scac \not \wred \scaco$, we note that $G$ was found uniformly computably in the indices of $\Phi$ and $\Psi$.

\begin{lem}\label{sw-pushdown}
        Given two problems $\sf P$ and $\sf Q$, if for each pair of functionals $\Phi$ and $\Psi$, there is a computable instance of $\sf P$ uniformly computable in (the indices of) these functionals which witnesses $\sf P \not \swred Q$ (via $\Phi$ and $\Psi$), then $\sf P \not \wred Q$.
    \end{lem}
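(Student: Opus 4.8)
The plan is to run a diagonalization driven by the recursion theorem. The key observation is that the \emph{only} structural difference between a Weihrauch reduction $\P \wred \Q$ and a strong Weihrauch reduction $\P \swred \Q$ is that the backward functional of the former may consult the original instance $X_{\P}$, whereas the backward functional of the latter may not. Since the hypothesis hands us, for every candidate pair of functionals, a \emph{computable} instance of $\P$ defeating the strong reduction, we can try to smuggle such an instance into a would-be strong backward functional by hard-coding it; but the instance we need depends on the index of the functional we are building, so we close the loop with Kleene's recursion theorem.

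In detail, I would argue as follows. Suppose toward a contradiction that $\P \wred \Q$, witnessed by fixed Turing functionals $\Phi$ and $\Psi$. Let $(\Theta_e)_{e\in\omega}$ enumerate all Turing functionals. For each index $e$, apply the hypothesis to the pair $(\Phi, \Theta_e)$ to obtain a computable instance $A_e$ of $\P$, together with an index for a program computing $A_e$ that is uniformly computable from $e$ (here $\Phi$ is fixed), such that $A_e$ witnesses $\P \not\swred \Q$ via $\Phi$ and $\Theta_e$; that is, either $\Phi^{A_e}$ is not an instance of $\Q$, or $\Phi^{A_e}$ admits a solution $Y$ with $\Theta_e^{Y}$ not a solution of $A_e$. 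Now define a computable function $g$ so that $\Theta_{g(e)}$ is the functional $Y \mapsto \Psi^{A_e \join Y}$; this is legitimate since an index for $A_e$ is computable from $e$ and $\Psi$ is a fixed functional. By the recursion theorem there is an index $e_0$ with $\Theta_{e_0} = \Theta_{g(e_0)}$ as oracle functionals, so $\Theta_{e_0}^{Y} = \Psi^{A_{e_0} \join Y}$ for all oracles $Y$.

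Write $A = A_{e_0}$, a computable instance of $\P$. Since $\Phi$ and $\Psi$ witness $\P \wred \Q$, the set $\Phi^{A}$ really is an instance of $\Q$, so the first alternative in the witness condition for $A$ fails; hence there is a solution $Y$ of $\Phi^{A}$ for which $\Theta_{e_0}^{Y}$ is not a solution of $A$. But $\Theta_{e_0}^{Y} = \Psi^{A \join Y}$, and because $Y$ solves $\Phi^{A}$ and $\Phi,\Psi$ witness the Weihrauch reduction, $\Psi^{A \join Y}$ must be a solution of $A$. This is a contradiction, so no such $\Phi,\Psi$ can exist and $\P \not\wred \Q$.

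The step I expect to be the crux is the self-referential definition of $A_{e_0}$ and $\Theta_{e_0}$: the instance $A_{e_0}$ is selected to defeat the strong reduction via $\Theta_{e_0}$, yet $\Theta_{e_0}$ is itself assembled from $A_{e_0}$. The recursion theorem is exactly the tool that resolves this circularity, and the hypothesis that the defeating instance is produced \emph{uniformly} in the functional indices is precisely what guarantees that $g$ is computable, so that the recursion theorem applies. Everything else is bookkeeping about unwinding the definitions of $\wred$ and $\swred$.
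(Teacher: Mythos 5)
Your proposal is correct and follows essentially the same route as the paper's proof: fix the forward functional, use the uniformity hypothesis to obtain the defeating instance as a computable function of the backward-functional's index, define $g$ with $\Theta_{g(e)}^Y = \Psi^{A_e \join Y}$, and apply the recursion theorem to produce a fixed point that simultaneously serves as the strong backward functional being defeated and as the composite $\Psi$-with-instance-hard-coded, yielding the contradiction. The only cosmetic difference is that the paper's uniformity function takes both functional indices before fixing the forward one, which changes nothing in substance.
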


    \begin{proof}
        Suppose $\sf P$ and $\sf Q$ are as hypothesized and $f(i,j)$ is a computable function which outputs (the index of) the instance $X$ of $\sf P$ witnessing $\sf P \not \swred \sf Q$ via the functionals $\Phi_i$ and $\Phi_j$. 
        For sake of contradiction assume that $\Phi_m$ and $\Phi_n$ witness $\sf P \wred Q$.
        Define a computable function $g(k)$ such that with $m$ and $n$ fixed we have
            \[
                \Phi_{g(k)}^Y = \Phi_n^{\Phi_{f(m,k)} \join Y}.
            \]
        Via the relativized recursion theorem, find a fixed point $k$ such that for all $Y$
            \[
                \Phi_{g(k)}^Y = \Phi_k^Y.
            \]
        We claim that $\Phi_m$ and $\Phi_k$ contradict $f(m,k)$, i.e., $\Phi_m$ and $\Phi_k$ witness $\sf P \swred Q$ with $X = \Phi_{f(m,k)}$.
        To see this, note that $f$ guarantees a solution $\hat Y$ to $\Phi_m^{\Phi_{f(m,k)}} = \Phi_m^X$ such that $\Phi_k^{\hat Y}$ is {\it not} a solution to $X = \Phi_{f(m,k)}$.
        But by construction
            \[
                \Phi_k^{\hat Y} = \Phi_{g(k)}^{\hat Y} = \Phi_n^{\Phi_{f(m,k)} \join \hat Y} = \Phi_n^{X \join \hat Y}.
            \]
        Since we assumed $\Phi_n$ witnesses, along with $\Phi_m$, that $\sf P \wred Q$, we have that $\Phi_n^{X \join \hat Y} = \Phi_k^{\hat Y}$ {\it is} a solution to $X = \Phi_{f(m,k)}$, a contradiction.
        Whence, we conclude that $\sf P \not \wred Q$.
    \end{proof}

    \begin{cor}
        $\scac \not \wred \scaco$
    \end{cor}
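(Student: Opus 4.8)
The plan is to combine the construction in the proof of Theorem~\ref{scaco-no-swred} with the ``push-down'' Lemma~\ref{sw-pushdown}. By that lemma, with $\sf P = \scac$ and $\sf Q = \scaco$, it suffices to exhibit, for every pair of Turing functionals $\Phi$ and $\Psi$, a computable $\scac$-instance witnessing $\scac \not\swred \scaco$ via $\Phi$ and $\Psi$, and to produce this instance \emph{uniformly} in the indices of $\Phi$ and $\Psi$. The proof of Theorem~\ref{scaco-no-swred} already builds such an instance $G$; what remains is to observe that the whole construction is uniformly effective, which is essentially the content of the concluding paragraph of that proof (``the condition $r$ was found computably'').

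First I would check that, given only indices for $\Phi$ and $\Psi$, the construction of the condition $r$ is an effective sequence of searches: each search --- for a chain $F_i$ in $\Phi^{\pi^{q_i}}$ together with a pair $x_i,y_i$ with $\Psi^{F_i}(x_i)\halts = \Psi^{F_i}(y_i)\halts = 1$, for the final pair $x,y$ with $\Psi^E(x)\halts = \Psi^E(y)\halts = 1$, and for the parallel conditions installing the prescribed assignment functions (including each appeal to Lemma~\ref{mindchange}) --- runs over a computable set of finite objects, with $\Phi$ and $\Psi$ simulated from their indices. I would interleave these searches with the construction of Lemma~\ref{nocompsoln} applied to whichever condition is current, so that at every stage a well-defined infinite computable stable $\scac$-instance $G$ is being built: if all the searches succeed then the construction reaches $r$ and $G$ is exactly the poset produced in Theorem~\ref{scaco-no-swred}; if some search fails to terminate then $G$ still extends the last condition reached and, by Lemma~\ref{nocompsoln}, has no infinite computable solution, so the very argument in the proof of Theorem~\ref{scaco-no-swred} that ``such a condition must exist'' instead exhibits $G$ as a witness to $\scac \not\swred \scaco$ via $\Phi$ and $\Psi$ (either $\Phi^G$ is not a valid $\scaco$-instance, or it is an $\omega$-ordered stable poset with no infinite chain, hence with an infinite computable antichain $Y$, and since $Y$ is computable and $G$ has no computable solution, $\Psi^Y$ is not a solution to $G$). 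Either way $G$ is defined and is a witness.

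Via the $s$-$m$-$n$ theorem this yields a total computable function $f(i,j)$ outputting an index for a computable $\scac$-instance $G_{i,j}$ witnessing $\scac \not\swred \scaco$ via $\Phi_i$ and $\Phi_j$. Applying Lemma~\ref{sw-pushdown} then gives $\scac \not\wred \scaco$.

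The main obstacle is the bookkeeping in the first step: tracking the proof of Theorem~\ref{scaco-no-swred} to confirm that its construction never reads off non-uniform information, and arranging totality of $f$ despite the searches being only semi-decidable. The one genuinely non-uniform ingredient in the surrounding theory --- the small/large type of the generic poset --- is never consulted: it is \emph{chosen} by us when we pass to a parallel condition and invoke Lemma~\ref{mindchange} to flip the generic to the large type, so it introduces no non-uniformity. Everything else is a routine dovetailing.
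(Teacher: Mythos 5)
Your proposal is correct and follows essentially the same route as the paper: the corollary is obtained by noting that the witness $G$ from the proof of Theorem~\ref{scaco-no-swred} is found uniformly computably in the indices of $\Phi$ and $\Psi$ and then applying Lemma~\ref{sw-pushdown}. Your additional care about dovetailing the searches with Lemma~\ref{nocompsoln} to guarantee totality of $f$ only makes explicit what the paper leaves implicit in its remark that ``the condition $r$ was found computably.''
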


\subsection{Separation over strong computable reducibility}

We now turn our attention to strong computable reducibility and work to prove $\scac \not \scred \scaco$.
In the previous section, we leveraged the differences in the global behavior of general stable posets compared to $\omega$-ordered stable posets.
To diagonalize the instance of $\scaco$, we utilized the fact that an instance of $\scac$ can be of the small or large type whereas any instance of $\scaco$ is necessarily a stable poset of the small type.
To separate the principles under strong computable reducibility, we will exploit differences in the local behavior of instances of $\scac$ and $\scaco$.
In particular, we will utilize a technique known as {\it tree labeling} while constructing the instance $G$ of $\scac$ to obtain control over the ordering of a pair $x < y$ in the computed instance of $\scaco$, $\Phi^G$. 
Unless $\Phi^G$ has a relatively simple solution we can avoid computations from, we will force 
$y \le_{\Phi^G} x$ in the $\omega$-ordered poset $\Phi^G$ to produce the needed contradiction. 

While this is the basic idea, intricate machinery is required to create this situation.
This is because in a computable reduction we need to diagonalize all possible forward functionals simultaneously.
Our previous argument relied on the ability to find fresh pairs $x$ and $y$ at each step of our construction that did not conflict with previous work.
In a computable reduction, we cannot guarantee that such pairs could be found because while $x$ and $y$ may be used to diagonalize some set $\Phi_0^{\pi}$, this same pair could repeatedly be required to diagonalize $\Phi_n^{\pi}$ for infinitely many indices $n$, preventing us from diagonalizing all possible functionals $\Phi_n, \Psi_n$ at some finite stage. 

Therefore the basic strategy is as follows: we construct a stable poset $G$ using the forcing notion $\ds P$. 
For each valid forward functional $\Phi$ (that sends stable posets $G$ to $\omega$-ordered stable posets $\Phi^G$) we build two sets $C_\Phi$ and $A_\Phi$ and aim to ensure one of these sets cannot compute a solution to $G$.
If this is impossible, we will obtain a suitable set $R$ to add to a family of sets $\mathcal{D}$ each of which (when joined with $G$) cannot compute a solution to $G$ due to genericity.
One such approach must succeed, for otherwise we will arrive at the situation in which we can force a pair $x < y$ to have $x \le_{\Phi^G} y$, contradicting that $\Phi^G$ is $\omega$-ordered.

The construction will handle one valid triple of functionals $(\Phi, \Psi_0, \Psi_1)$ at a time.
By valid, we mean that $\Phi$ is a forward functional which produces $\omega$-ordered stable posets when given a stable poset $G$, and both $\Psi_0$ and $\Psi_1$ compute solutions to $G$ when given solutions to $\Phi^G$.
By appealing to Lachlan's Disjunction (Proposition \ref{LD}), we will need to only succeed in diagonalizing one of $\Psi_0^{C_\Phi}$ or $\Psi_1^{A_\Phi}$.
We say that diagonalizing $\Psi_0^{C_\Phi}$ is ``making progress on the chain'' and that diagonalizing $\Psi_1^{A_\Phi}$ is ``making progress on the antichain.''

\begin{prop}[Lachlan's Disjunction]\label{LD}
    Given sets $C$ and $A$, let $\mathcal{P}$ be an arithmetical property of sets.
    If for any pair of functionals $(\Psi_0,\Psi_1)$, we have $\mathcal{P}(\Psi_0^C)$ or $\mathcal{P}(\Psi_1^A)$ then for some $X \in \{C,A\}$ we have $\mathcal{P}(\Psi^X)$ for all functionals $\Psi$.
\end{prop}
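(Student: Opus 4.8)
The plan is to recognize that Proposition~\ref{LD} is essentially a pure logical fact: a universally quantified disjunction whose two disjuncts each depend on only one of the quantified variables is equivalent to the disjunction of the two separately universally quantified statements. Writing the hypothesis as $\forall\Psi_0\,\forall\Psi_1\,\bigl[\mathcal{P}(\Psi_0^C)\vee\mathcal{P}(\Psi_1^A)\bigr]$ and the conclusion as $\bigl[\forall\Psi\,\mathcal{P}(\Psi^C)\bigr]\vee\bigl[\forall\Psi\,\mathcal{P}(\Psi^A)\bigr]$, what we want is the forward direction of the schema $\forall x\,\forall y\,[P(x)\vee Q(y)]\leftrightarrow\bigl[\forall x\,P(x)\bigr]\vee\bigl[\forall y\,Q(y)\bigr]$.

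I would argue by contraposition. Suppose the conclusion fails for both choices $X=C$ and $X=A$; then there is a functional $\Psi_0$ with $\neg\mathcal{P}(\Psi_0^C)$ and a functional $\Psi_1$ with $\neg\mathcal{P}(\Psi_1^A)$. Applying the hypothesis to the single pair $(\Psi_0,\Psi_1)$ yields $\mathcal{P}(\Psi_0^C)$ or $\mathcal{P}(\Psi_1^A)$, contradicting the choice of $\Psi_0$ and $\Psi_1$. Hence the conclusion holds, the witnessing $X\in\{C,A\}$ being precisely the side on which no single functional defeats $\mathcal{P}$. The only points needing a word of care are that we reason classically, so that $\neg\mathcal{P}(\Psi^X)$ is a legitimate property (unproblematic since $\mathcal{P}$ is arithmetical and we work in classical second-order arithmetic over $\omega$), and that one should fix the convention that $\mathcal{P}(\Psi^X)$ is evaluated treating $\Psi^X$ as a partial, possibly ill-formed, object when $\Psi$ does not converge everywhere --- in the intended application $\mathcal{P}$ reads ``is not an infinite chain or antichain in $G$,'' so such an output trivially satisfies $\mathcal{P}$ --- so that every functional contributes a well-defined truth value to the quantifications on both sides.

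I do not expect a genuine obstacle: the content of the proposition lies not in its one-line proof but in its use, where it licenses committing once and for all to diagonalizing either every $\Psi_0^{C_\Phi}$ (making progress on the chain) or every $\Psi_1^{A_\Phi}$ (making progress on the antichain), rather than having to defeat the pair simultaneously. Accordingly the write-up will be short, and I will phrase it so as to make transparent that the ``$X$'' it produces is exactly the side on which the subsequent construction is obliged to succeed.
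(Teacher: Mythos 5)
Your proposal is correct and matches the paper's own proof: both argue by contraposition, observing that a failure on each side yields a single pair $(\Psi_0,\Psi_1)$ defeating the hypothesis, so the statement is just the distribution of universal quantifiers over a disjunction in disjoint variables. The extra remarks about classical reasoning and partial outputs are fine but not needed beyond what the paper already assumes.
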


\begin{proof}
    The contrapositive is a tautology.
    Fix $C$ and $A$.
    If there are functionals $\Psi_0$ and $\Psi_1$ such that $P(\Psi_0^C)$ does not hold and $P(\Psi_1^A)$ does not hold, then there is a pair of functionals, namely $(\Psi_0,\Psi_1)$, such that both $P(\Psi_0^C)$ and $P(\Psi_1^A)$ fail to hold. 
\end{proof}

\subsubsection{Tree labeling}

The bulk of technical work will arise in finding appropriate finite extensions of initial segments of $C_\Phi$ and $A_\Phi$.
Call these initial segments $C$ and $A$.
To make progress on the chain, we will to need find a finite set $F$ such that $\Psi_0^{C \cup F}$ presents a diagonalization opportunity against being a solution to $G$. 
If this fails, we will attempt to make progress on the antichain and find a similar finite set $F$ for which $\Psi_1^{A \cup F}$ can be prevented from forming a solution to $G$.
We conduct these searches symmetrically and in turn, beginning first with the chain $C$, before repeating the search in an attempt to extend $A$.
The key to the proof will be to gain leverage on the global structure of $\Phi^G$ after each failed search.
If both searches fail, we will gain total control on the limit behavior of a particular set of elements in $\Phi^G$ and become able to trap $\Phi^G$ from being both stable and $\omega$-ordered as mentioned above.

To conduct each search, we utilize {\it tree labeling} and therby gain control over the limit behavior of elements in $\Phi^G$.
This framework was first introduced by Dzhafarov \cite{dzhafarov_strong_2016}, and subsequently applied in Dzhafarov et al.~ \cite{dzhafarov_ramseys_2017} and Nichols \cite{nichols_effective_2019}.
The rough idea is to collect all potential extensions of a Mathias condition $(E,I)$ that present a diagonalization opportunity against some functional $\Delta$.
This is done by organizing all finite sets $F \subseteq I$ which have $\Delta^{E \cup F}(w)\halts = 1$ for some sufficiently large $w$ that has yet to be committed in the construction.
The formal definition is given next before verifying key properties.

\begin{dfn}
    For strings, $\alpha, \beta \in \omega^{<\omega}$, we let $a^\#$ abbreviate $\alpha \restriction |\alpha| - 1$ and $\alpha * \beta$ denote the concatenation of $\alpha$ and $\beta$. 
    For any $x \in \omega$, we let $\alpha * x = \alpha * \langle x \rangle$. 
    Thus $(\alpha * x)^\# = \alpha.$
    If $T \subseteq \omega^{< \omega}$ is a tree, then for any $\alpha \in T$, we call the set $R = \{x \in \omega: \alpha * x \in T\}$ the {\it row below} $\alpha$.
\end{dfn}

\begin{dfn}
    The {\it extension tree} $T(E,I,\Delta,n)$ for a given a Mathias condition $(E,I)$, Turing functional $\Delta$, and $n \in \omega$ is defined as follows: $\lambda \in T(E,I,\Delta,n)$ and $\alpha \in T(E,I,\Delta,n)$ if $\alpha$ is strictly increasing, $\ran(\alpha) \subseteq I$, and
        \[
            (\forall F \subseteq \ran(\alpha^\#))(\forall w \ge n) (\Delta^{E \cup F}(w) \simeq 0).
        \]
\end{dfn}

Notice $T(E,I,\Delta,n)$ is clearly closed under prefixes and thus is a subtree of $I^{<\omega}$.
For our purposes, we only consider functionals $\Delta$ with range contained in $\{0,1\}$.  
The key properties of the extension tree are summarized in the following lemma (which appears as Lemma 3.2 of \cite{dzhafarov_ramseys_2017}).

\begin{lem}
    The tree $T = T(E,I,\Delta,n)$ has the following properties
        \begin{enumerate}
            \item {If $T$ has an infinite path $f$, then $I' = \ran(f) \subseteq I$ satisfies
                \[
                    (\forall F \subseteq I')(\forall w \ge n)(\Delta^{E \cup F}(w) \simeq 0).
                \]
                }
            \item {If $\alpha \in T$ is not terminal, then for all $x \in I$ such that $x > \ran(\alpha)$, $\alpha * x \in T$. 
            }
            \item {If $\alpha \in T$ is terminal, then there is a finite set $F \subseteq \alpha$ such that
                \[
                    (\exists w \ge n)(\Delta^{E \cup F}(w) = 1).
                \]}
            \item {There is some $w \ge n$ with $\Delta^E(w) = 1$ if and only if $T(E,I,\Delta,n) = \{\lambda\}$.}
        \end{enumerate}
\end{lem}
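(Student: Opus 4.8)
The plan is to verify all four items by straightforward definition‑chasing, using the characterization that for $\alpha \neq \lambda$ one has $\alpha \in T$ if and only if $\alpha$ is strictly increasing, $\ran(\alpha) \subseteq I$, and the ``certificate'' $(\forall F \subseteq \ran(\alpha^\#))(\forall w \ge n)(\Delta^{E \cup F}(w) \simeq 0)$ holds. Two observations make everything routine: first, this certificate depends on $\alpha$ only through $\alpha^\#$, the remaining requirements on $\alpha$ being purely combinatorial; second, since we only deal with functionals satisfying $\ran(\Delta) \subseteq \{0,1\}$, the negation of ``$\Delta^X(w) \simeq 0$'' is exactly ``$\Delta^X(w) \halts = 1$''.

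For item (1), I would take an infinite path $f$ and a finite $F \subseteq I' = \ran(f)$, choose $k$ with $F \subseteq \ran(f \restriction k)$ (possible since $F$ is finite and $\ran(f) = \bigcup_k \ran(f \restriction k)$), and note that $f \restriction (k+1) \in T$ with $(f \restriction (k+1))^\# = f \restriction k$; the certificate for $f \restriction (k+1)$, applied to the particular subset $F$ of $\ran(f \restriction k)$, yields $(\forall w \ge n)(\Delta^{E \cup F}(w) \simeq 0)$. For item (2), given a non‑terminal $\alpha \in T$, prefix‑closure of $T$ supplies an immediate successor $\alpha * x' \in T$, whose certificate is $(\forall F \subseteq \ran(\alpha))(\forall w \ge n)(\Delta^{E \cup F}(w) \simeq 0)$ because $(\alpha * x')^\# = \alpha$. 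Then for any $x \in I$ with $x > \ran(\alpha)$, the string $\alpha * x$ is strictly increasing, has $\ran(\alpha * x) \subseteq I$, and carries this same certificate (again $(\alpha * x)^\# = \alpha$), so $\alpha * x \in T$.

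For item (3), I would argue from the failure of membership: if $\alpha \in T$ is terminal, pick any $x \in I$ with $x > \ran(\alpha)$ (here using that $I$ is infinite, from the ambient Mathias condition); then $\alpha * x$ meets the combinatorial requirements but $\alpha * x \notin T$, so its certificate must fail, producing a finite $F \subseteq \ran(\alpha)$ and some $w \ge n$ with $\Delta^{E \cup F}(w) \halts = 1$, and identifying $F$ with a subset of $\alpha$ gives the claim. For item (4), the forward direction follows by taking $F = \zero$ in the certificate: if $\Delta^E(w) \halts = 1$ for some $w \ge n$, then every $\alpha \neq \lambda$ fails its certificate, so $T = \{\lambda\}$; conversely, if $T = \{\lambda\}$ then for $x \in I$ the string $\langle x \rangle \notin T$, and since $\langle x \rangle$ meets the combinatorial requirements, its certificate $(\forall F \subseteq \ran(\lambda))(\forall w \ge n)(\Delta^{E \cup F}(w) \simeq 0)$ — which is just $(\forall w \ge n)(\Delta^E(w) \simeq 0)$ — must fail, producing $w \ge n$ with $\Delta^E(w) \halts = 1$.

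I do not expect a genuine obstacle: the whole lemma is definitional unwinding. The only points needing care are bookkeeping ones — correctly tracking the $\alpha^\#$ operation when passing between a node and its immediate successors — together with recording the standing hypotheses that are used silently, namely $\ran(\Delta) \subseteq \{0,1\}$ (to turn ``$\neg(\Delta^X(w) \simeq 0)$'' into ``$\Delta^X(w) \halts = 1$'') and the infiniteness of $I$ (to guarantee the successor strings invoked in items (3) and (4) actually exist).
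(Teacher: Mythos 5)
Your proposal is correct and takes essentially the same approach as the paper: both are pure definition-chasing built on the observation that the membership certificate for a string $\alpha$ depends only on $\alpha^\#$, with the same handling of items 2--4, the same use of the convention that $\Delta$ is $\{0,1\}$-valued to turn the failed certificate into $\Delta^{E\cup F}(w)\halts=1$, and the same (implicit in the paper, explicit in yours) reliance on $I$ being infinite for items 3 and 4. The only cosmetic differences are that you prove item 1 directly rather than by contradiction and work only with finite $F$ there, which suffices by the use principle --- precisely the ``without loss of generality, $F$ is finite'' step the paper also takes.
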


\begin{proof}
    \begin{enumerate}
        \item {Suppose not. Then there is a set $F \subseteq I'$ and $w$ witnessing $\Delta^{E\cup F}(w) \halts = 1$. Without loss of generality, assume $F$ is finite. Let $\alpha \prec f$ be such that $F \subseteq \ran(\alpha^\#)$. By construction, $\alpha \not \in T$, a contradiction.}
        \item {If $\alpha \in T$ is non-terminal, then there is some $\beta \in T$ such that $\beta^\# = \alpha$. So every $F \subseteq \ran(\alpha)$ is such that $\Delta^{E \cup F}(w) \simeq 0$ for all $w \ge n$. Thus for every $x \in I$ with $x > \ran(\alpha)$, $\alpha * x \in T$ because $\alpha * x$ is increasing and $(\alpha * x)^\# = \alpha$.}
        \item {If $\alpha \in T$ is terminal, then for all $x \in I$ with $x > \ran(\alpha)$, $(\alpha * x) \not \in T$. Thus, as $(\alpha * x)^\# = \alpha$, there are witnesses $F \subseteq \ran(\alpha)$ and $w \ge n$ with $\Delta^{F \cup N}(w) \halts = 1$.}
        \item {For the if direction, note if $T = \{\lambda\}$ then $\lambda$ is terminal in $T$ and the statement follows from item 3. For the only if direction, notice $\alpha \in T$ implies there are no finite sets $F \subseteq \ran(\alpha^\#)$ such that $\Delta^{E \cup F}(w) = 1$ for some $w \ge n$. But this occurs when $F = \emptyset$. Thus if $\emptyset \subseteq \ran(\alpha^\#)$ and $\alpha \ne \lambda$ then $\alpha \not \in T$. Consequently, $T = \{\lambda\}$.}
    \end{enumerate}
\end{proof}

\begin{dfn}
    Suppose an extension tree $T = T(E,I,\Delta,n)$ is well-founded.
    Beginning with the terminal nodes of $T$, we recursively define a function $\lb : T \to \omega \cup \{\infty\}$ assigning to each $\alpha \in T$ a {\it label} $\lb(\alpha)$.
    If $\alpha \in T$ is terminal, there is some $w \ge n$ and $F \subseteq \ran(\alpha)$ such that $\Delta^{E \cup F}(w) = 1$.
    Let $\lb(\alpha)$ be the least such witness $w$.
    If $\alpha \in T$ is not terminal, assume for recursion that $\lb(\alpha * x)$ is defined for all $x \in I$ with $x > \ran(\alpha)$.
    If there is a number $w$ such that $\lb(\alpha * x) = w$ for infinitely many $x$, let $\lb(\alpha)$ be the least such $w$.
    Else, let $\lb(\alpha) = \infty$.
    We call $\lb$ a {\it labeling} of $T$ and say $\lb(\alpha)$ is {\it finite} if $\lb(\alpha) \in \omega$.
\end{dfn}

\begin{dfn}
    If $T = T(E,I, \Delta, n)$ is a well-founded extension tree with labeling $\lb$, we define the {\it labeled subtree} $T^L$ of $T$ recursively as follows.
    Place $\lambda \in T^L$.
    Now, assume $\alpha \in T^L$.
    If $\lb(\alpha)$ is finite, place $\alpha * x \in T^L$ for all $x$ such that $\lb(\alpha * x) = \lb(\alpha).$
    If $\lb(\alpha) = \infty$ and infinitely many immediate successors of $\alpha$ have label $\infty$, place each such successor into $T^L$.
    Else, $\lb(\alpha) = \infty$ and there are infinitely many finite labels $w$ such that $\lb(\alpha * x) = w$ for some $\alpha * x \in T$.
    In this case, for each such label $w$ place $\alpha * x$ into $T^L$ if $x$ is least such that $\lb(\alpha *x) = w$.
\end{dfn}

Note that if $\lb$ is a labeling of an extension tree $T$, then its restriction to the labeled subtree $T^L$ is a labeling of $T^L$.
		When the domain of $\lb$ is clear from context, we will use $\lb$ to refer to either the labeling of $T$ or $T^L$.

		The utility of this framework reveals itself when $\Delta$ is treated as a backward functional in a potential (strong) computable reduction.
		If $\lb(\lambda) = w$, the idea is to continue our construction in such a way so that $w$ will prevent $\Delta^X$ from being a solution to the original instance.
		Here $X$ is any infinite set compatible with the Mathias condition $(E,I)$.
		For example, if $\Delta^E$ is constructing a chain in our stable poset, we may make $w$ isolated to prevent $\Delta^X$ from being a solution.
		To force $w \in \Delta^X$ we search for a terminal string $\alpha \in T^L$.
		Since it must also have label $w$, there will be a set $F \subseteq \ran(\alpha)$ such that the Mathias condition $(E \cup F, \{x \in I : x > F\})$ will force $w \in \Delta^X$ for any compatible $X$.
		We then will have diagonalized $\Delta$ in the construction.
		The challenge will be finding $\alpha \in T^L$ such that $E \cup \ran(\alpha)$ has the desired structure in the original instance (e.g. $E \cup \ran(\alpha)$ is a chain of small elements).
		If our instance involves a stable limit behavior (e.g. being small or isolated), then we will be able to find some one point extension of a given $\alpha \in T^L$ as the row below $\alpha$ is infinite.
		At some point, the relationship of the elements in $\ran(\alpha)$ will have stabilized with respect to sufficiently large elements in the row below $\alpha$.
		So long as we find a non-terminal $\alpha \in T^L$ with a finite label, this approach will succeed.
		If instead, $\lb(\lambda) = \infty$ and our search through $T^L$ takes us to a pre-leaf $\alpha$ with $\lb(\alpha) = \infty$, then the labels of the successors $\alpha * x$ will all be distinct and finite.
		In this case, we may not be able to find a suitable $x$ and label $w$ to simultaneously extend our condition via $\ran(\alpha * x)$ while diagonalizing $\Delta$ with $w$.
		This will only occur if the limit behavior of $w$ directly determines the limit behavior of $x$, and in this way, we will have gained some control over the global structure of the computed instance in the reduction.
		Of course, at any point in this search, we may simply find an infinite set $R \subseteq I$ which will be suitable to add to $\mathcal{D}$.
  
\subsubsection{Proving the result}

We begin by proving a lemma which states that for any set $R$, a sufficiently generic poset $G$ resulting from our forcing notion $\ds P$ can avoid having solutions computable in $G \oplus R$.

\begin{lem}\label{suff}
    There is an $n$ such that for any set $R$ if $G$ is the poset resulting from a $\Sigma^0_n(R)$-generic filter in $\ds P$, then $G$ contains no $(G \oplus R)$-computable solution.
\end{lem}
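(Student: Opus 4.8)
The plan is to meet, for every Turing index $e$, the requirement $\mathscr{R}_e$: \emph{if $\Phi_e^{G\join R}$ is the characteristic function of an infinite set, then that set is neither a $\le_G$-chain nor a $\le_G$-antichain} — which I will secure by exhibiting inside it a $\le_G$-comparable pair and a $\le_G$-incomparable pair (the analogue of the requirement used in Lemma \ref{nocompsoln}, where the relevant function is oracle-free). For each $e$ let $D_e$ be the set of conditions $p$ such that \emph{either} $p$ forces $\Phi_e^{G\join R}$ to fail to be the characteristic function of an infinite set, \emph{or} there are $x_0,x_1,y_0,y_1<|\pi^p|$ and a stage $s$ with $\Phi_{e,s}^{\pi^p\join R}(x_0)\halts=\Phi_{e,s}^{\pi^p\join R}(x_1)\halts=\Phi_{e,s}^{\pi^p\join R}(y_0)\halts=\Phi_{e,s}^{\pi^p\join R}(y_1)\halts=1$, with $x_0\mid_{\pi^p}x_1$ and $y_0<_{\pi^p}y_1$. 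Since any poset extending $\pi^p$ — in particular $G$ — agrees with $\pi^p$ on $\field(\pi^p)$, and since halting computations persist, a condition in $D_e$ genuinely forces $\mathscr{R}_e$. So it suffices to prove that each $D_e$ is dense and that the family $\langle D_e\rangle_e$ is uniformly $\Sigma^0_n(R)$ for some fixed $n$: a $\Sigma^0_n(R)$-generic filter then meets every $D_e$, and the resulting $G$ carries no $(G\join R)$-computable infinite chain or antichain.

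To see $D_e$ is dense, fix $p$ and assume toward a contradiction that no extension of $p$ lies in $D_e$. In particular no extension of $p$ forces $\Phi_e^{G\join R}$ finite, and unwinding the forcing relation one obtains $(\ast)$: for every $q\lep p$ and every $N$ there are $q'\lep q$ and $x$ with $\max(N,|\pi^q|)\le x<|\pi^{q'}|$ and $\Phi_e^{\pi^{q'}\join R}(x)\halts=1$. I would then build $p\gep q_1\gep\dots\gep q_4$ by alternately invoking $(\ast)$ and, where needed, re-assigning limit behaviours by the mind-change mechanism of Lemma \ref{mindchange} (and its large-type counterpart). The point driving this is that $\Phi_e$ is applied only to the poset $G$, never to the assignment functions, so moving to a parallel condition never disturbs a halting computation. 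Having obtained $x_1$ with $\Phi_e^{\pi^{q_1}\join R}(x_1)\halts=1$, I would arrange that $x_1$ is \emph{isolated} with some stabilization point $t_1$; then $(\ast)$ applied above $t_1$ produces $x_2$ with $\Phi_e^{\pi^{q_2}\join R}(x_2)\halts=1$ and $x_1\mid_{\pi^{q_2}}x_2$ permanently. Restarting from $q_2$, I would obtain $x_3$, arrange that $x_3$ is \emph{not} isolated with stabilization point $t_3$, and use $(\ast)$ above $t_3$ to produce $x_4$ with $x_3,x_4$ permanently $\le_G$-comparable. Then $q_4\lep p$ exhibits the required comparable and incomparable pairs inside $\Phi_e^{\pi^{q_4}\join R}$, so $q_4\in D_e$, a contradiction.

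For the complexity count, the relation $q\lep p$ and the enumeration of conditions are computable; ``$p$ forces $\Phi_e^{G\join R}$ not to be the characteristic function of an infinite set'' unwinds, via the forcing relation for $\ds P$ (\S 3.2 of \cite{shore_lecture_forcing_2016}), into a fixed boolean combination of $\Sigma^0_2(R)$ statements, and the finite-witness disjunct is $\Sigma^0_1(R)$. Hence $D_e$ is uniformly $\Sigma^0_n(R)$ for a fixed $n$ (a routine bound gives $n=3$; the precise value is immaterial).

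The hard part is the density argument, and specifically installing the isolated-versus-non-isolated status on the freshly revealed element $x_i$: the mind-change lemma returns a \emph{parallel} condition, which need not extend $p$, so the construction must be organized so that these type-adjustments never leave the cone below $p$ — for instance by first splitting on the type (or the all-isolated status) of $p$, and, in the recalcitrant sub-case where no legal re-assignment gives $x_i$ the status required, extracting from that very obstruction that $\Phi_e^{G\join R}$ was forced to be finite after all. The crux is thus to rule out a functional $\Phi_e$ that, over a sufficiently generic $G$, stubbornly produces an infinite chain of small elements or an infinite antichain of isolated elements; pinning this down forces one to exploit the fine interplay within $\ds P$ of the assignment function, the stabilization points, and the mind-change lemma (and, if needed, the tree-labeling apparatus).
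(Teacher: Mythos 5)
Your reduction to a density-plus-complexity argument is the right frame, and your complexity estimate is fine, but the density argument itself is not completed, and you say so: the step of installing the non-isolated status on a freshly revealed witness (and, more generally, the ``recalcitrant sub-case'' in which no legal re-assignment gives $x_i$ the required status) is exactly where your proof stops, deferred to ``the fine interplay'' of the assignment function and possibly the tree-labeling machinery. That is the genuine gap: you have chosen a diagonalization target — an explicit $\le_G$-comparable pair \emph{and} an explicit $\le_G$-incomparable pair inside $\Phi_e^{G\join R}$, as in Lemma \ref{nocompsoln} — that requires a four-stage construction in which limit behaviours must be re-assigned repeatedly while staying below $p$ and consistent with the condition axioms (predecessors of small elements must be small, no mixing of $S$ and $L$, etc.), and the write-up does not verify that these re-assignments are always legal. (They can in fact be arranged, because your witnesses are taken above $|\pi^p|$, so $a^p$ places no constraint on them, and because once $x_1\mid_{\pi^{q_2}}x_2$ is a fact of the finite poset it persists under any later re-assignment; but none of this is argued, and your own closing paragraph concedes the central case is open.)

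The missing idea is that this entire difficulty can be bypassed by diagonalizing against the \emph{limit behaviour} rather than against explicit pairs. The paper's dense set $W$ consists of conditions forcing either that $\Gamma^{G\oplus R}$ is finite, or that $\Gamma^{G\oplus R}$ contains two elements $x,y$ with $x$ isolated in $G$ and $y$ small or large in $G$. This already kills both outcomes at once: an infinite chain cannot contain an isolated element (it would be comparable to infinitely many elements of $G$), and an infinite antichain cannot contain a small or large element. Density is then a single two-case argument: if no extension of $p$ forces $\Gamma^{G\oplus R}$ finite, then $p$ forces infinitely many $x$ with $\Gamma^{G\oplus R}(x)\halts=1$, so some $r\lep p$ reveals two halting witnesses $x,y>|\pi^p|$; since these lie outside $\operatorname{dom}(a^p)$ and their limit behaviour does not affect the local structure of $\pi^r$, one parallel-condition move sets $a^r(x)=(I,t_0)$ and $a^r(y)=(S,t_1)$, and $r\in W$. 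No second round of realizing comparabilities, no stabilization-point bookkeeping, and no appeal to Lemma \ref{mindchange} or tree labeling is needed. If you want to salvage your route, you must actually carry out the re-assignment analysis you postpone; as written, the proof of density — the heart of the lemma — is incomplete.
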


\begin{proof}
    Fix a set $R$ and functional $\Gamma$.
    Let $W$ be the set of $\ds P$ conditions that force one of the following:
        \begin{itemize}
            \item {The set defined by $\Gamma^{G \oplus R}$ is finite.}
            \item {There are two elements $x,y \in \Gamma^{G \oplus R}$ such that $x$ is isolated in $P$ and $y$ is either small or large in $P$.}
        \end{itemize}
    Clearly, $W$ is uniformly arithmetic in $R$.
    So there is some $n$ such that $W$ is $\Sigma^0_n(R)$.
    It remains to show that $W$ is dense in $\ds P$.
    To see this, let $p$ be any condition in $\ds P$.
    Either there is a $q \le_{\ds P} p$ forcing $\Gamma^{G \oplus R}$ to define a finite set, in which case $q \in W$, or $p$ forces that $\Gamma^{G \oplus R}(x) \halts = 1$ for infinitely many $x$.
    In the latter case, there is a condition $r \le_{\ds P} p$ and two numbers $x,y > |\pi^p|$ such that $\Gamma^{\pi^r \oplus R}(x)\halts = \Gamma^{\pi^r \oplus R}(y)\halts = 1$.
    As the limit behavior of $x$ and $y$ does not effect the local structure of $\pi^r$, we can further assume without loss of generality that $a^r(x) = (I,t_0)$ and $a^r(y) = (S,t_1)$ for some $t_0$ and $t_1$.
    Thus $r \in W$ and the proof is complete.
\end{proof}


We are now ready to prove that there is a stable poset $G$ witnessing that $\scac \not \scred \scaco$.
To summarize our approach, for every triple of functionals $(\Phi,\Psi_0,\Psi_1)$ we seek to build an infinite chain $C_\Phi$ and an infinite chain $A_\Phi$ in $\Phi^G$ such that either $\Psi_0^{C_\Phi}$ is not a solution of $G$ or $\Psi_1^{A_\Phi}$ is not a solution of $G$.
We will first attempt to make progress on the chain $C_\Phi$, and then attempt to make progress on the antichain $A_\Phi$.
If both attempts fail, we may encounter a suitable infinite set $R$ to add to a collection $\mathcal{D}$.
At other stages of the construction, we will ensure $G$ is sufficiently generic over the elements of $\mathcal{D}$ so that $G$ has no $(G \oplus R)$-computable solutions for each $R \in \mathcal{D}$.
We will conclude by showing that if progress cannot be made on the chain or antichain, and no such $R$ can be found, then $\Phi^G$ is either not stable or not $\omega$-ordered.
The latter will be done by exemplifying a pair $a < b$ with $b \le_{\Phi^G} a$.
Note we use the notation $X < Y$ for sets $X$ and $Y$ to indicate $\max X < \min Y$ and the notation $Y < x$ to indicate $x > \max Y$.

\begin{thm}\label{potatoes}
            There is a stable poset $G$ and a collection of infinite sets $\mathcal{D}$ such that for any $R \in \mathcal{D}$, no $(G \oplus R)$-computable infinite set is a chain or antichain of $G$. Moreover, any $\omega$-ordered stable poset $\hat{G}$ computable from $G$ has either, an infinite chain or infinite antichain computable in $(G \oplus R)$ for some $R \in \mathcal{D}$, or an infinite chain or infinite antichain that computes no infinite chain or infinite antichain in $G$.
        \end{thm}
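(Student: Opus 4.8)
The plan is to construct $G$ by forcing with $\ds P$. We build a descending sequence of conditions $p_0 \gep p_1 \gep \cdots$, set $G = \bigcup_n \pi^{p_n}$, and simultaneously enumerate the members of $\mathcal D$ (which we seed with $\zero$). Stability and totality of $G$ require nothing beyond genericity, since the conditions already assign every committed element a stable limit behavior; so it suffices to meet, for each $k$, the dense set of conditions $p$ with $|\pi^p| \ge k$. Each time a set $R$ is enrolled in $\mathcal D$, we arrange that all later stages also meet, for every functional $\Gamma$, the dense sets from the proof of Lemma \ref{suff} relative to $R$; these are arithmetic in $R$ and may be deferred, so routine bookkeeping makes $G$ as $\Sigma^0_n(R)$-generic as Lemma \ref{suff} demands for each $R\in\mathcal D$. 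That yields the first assertion: no $(G\oplus R)$-computable infinite set is a chain or antichain of $G$ (in particular, with $R=\zero$, $G$ computes no solution of itself).

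The substance lies in the requirements indexed by triples $(\Phi,\Psi_0,\Psi_1)$, handled one at a time. Fix $\Phi$; if $\Phi^G$ is not an $\omega$-ordered stable poset there is nothing to do, so assume it is one, necessarily of the small type. We try to build two infinite chains $C_\Phi,A_\Phi$ in $\Phi^G$ by extending committed finite chains $C,A$ by fresh elements drawn from $\Phi^G$ as revealed by extending the current condition. If at some stage no extension reveals a longer chain in $\Phi$, then $\Phi^G$ has bounded chain length and hence (being small-type stable) cofinitely many isolated elements, and a separate, more elementary search yields an infinite antichain of $\Phi^G$ witnessing the first disjunct of the ``moreover'' for a suitable member of $\mathcal D$; so we may assume $C_\Phi,A_\Phi$ can be grown indefinitely. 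We then process the pairs $(\Psi_0,\Psi_1)$ in turn. Given the current $C$, the reservoir $I$ of fresh $\Phi^G$-elements extending $C$ as a chain, and the least number $n$ not yet committed in $G$, form the extension tree $T(C,I,\Psi_0,n)$. If it is ill-founded, an infinite path has range $R\subseteq I$ along which no chain-extension of $C$ ever forces a value $\ge n$ into $\Psi_0$; we enroll $R$ in $\mathcal D$, complete $C_\Phi$ along $R$, and are done with $\Phi$ via the first disjunct (with $C_\Phi$ a $\Phi^G$-solution computable from $G\oplus R$). Otherwise we descend through the labeled subtree $T^L$; if we reach a node with a finite label $w$ whose row is infinite, then since $G$ is stable we may step to a one-point extension $\alpha*x$ with $x$ large enough that its limit behavior is still free, continue to a terminal descendant with label $w$, commit a finite $F$ with $\Psi_0^{C\cup F}(w)\halts=1$, and commit $w$ small (or large, per the type of $G$) and a second such witness isolated—permanently spoiling $\Psi_0^{C_\Phi}$ as a solution of $G$, permanence holding since $C_\Phi$ is henceforth extended only past the use. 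The only remaining possibility is that the descent stalls at a pre-leaf $\alpha$ with $\lb(\alpha)=\infty$ whose immediate successors carry distinct finite labels, so the label $w$ of a successor couples the still-free limit behavior of $w$ to that of the extension element; in that case we run the symmetric search on the antichain side with $A,\Psi_1$, and if it too stalls the two couplings together pin down the limit behaviors of a designated pair so rigidly that completing the condition to a stable poset is forced to put $b\le_{\Phi^G}a$ for some $a<b$ among the relevant rows, contradicting $\omega$-orderedness of $\Phi^G$. Hence joint stalling is impossible, and for each pair $(\Psi_0,\Psi_1)$ we permanently spoil $\Psi_0^{C_\Phi}$ or $\Psi_1^{A_\Phi}$, unless $\Phi$ was already disposed of via the first disjunct.

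To assemble: genericity makes $G$ a stable poset on $\omega$, and Lemma \ref{suff} gives the first assertion. Let $\hat G=\Phi^G$ be any $\omega$-ordered stable poset computable from $G$. If the module for $\Phi$ ever enrolled an $R$ in $\mathcal D$, then $\hat G$ has an infinite chain or antichain computable in $G\oplus R$, the first disjunct of the ``moreover.'' Otherwise $C_\Phi$ and $A_\Phi$ are infinite chains of $\hat G$ such that, for every pair $(\Psi_0,\Psi_1)$, $\Psi_0^{C_\Phi}$ or $\Psi_1^{A_\Phi}$ fails to be an infinite chain or antichain of $G$; applying Proposition \ref{LD} with $\mathcal P(Y)$ the property ``$Y$ is not an infinite chain or antichain of $G$'' gives one of $C_\Phi,A_\Phi$ as an infinite chain of $\hat G$ from which no functional computes a solution of $G$, the second disjunct. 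This proves the theorem.

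The main obstacle is the stalling case on both sides: proving that when the extension trees refuse a diagonalizing extension for $\Psi_0$ on $C$ and for $\Psi_1$ on $A$, and furnish no infinite reservoir set for $\mathcal D$, the accumulated coupling of limit behaviors is genuinely incompatible with $\Phi^G$ being simultaneously stable and $\omega$-ordered, witnessed by an explicit pair $a<b$ with $b\le_{\Phi^G}a$. Making this rigorous requires tracking precisely where the witnesses and stabilization points of the committed pairs sit relative to the rows of the two extension trees, so that the constraints really do force a comparability against the natural order. A secondary, organizational difficulty is that $\mathcal D$ is generated on the fly, so the Lemma \ref{suff} genericity requirements over its members can only be scheduled after those members appear yet must still be met cofinally without disturbing diagonalizations already installed—handled by always extending $C_\Phi$, $A_\Phi$, and the generic itself by fresh large elements.
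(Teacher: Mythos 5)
Your overall architecture matches the paper's: forcing with $\ds P$, Lemma \ref{suff} genericity over an on-the-fly $\mathcal D$, requirements indexed by triples $(\Phi,\Psi_0,\Psi_1)$, extension trees with labelings run first on the chain side and then on the antichain side, and Lachlan's Disjunction (Proposition \ref{LD}) to finish. But the crux --- what to do when both labeled-tree descents stall at a node with label $\infty$ and distinct finite successor labels --- is precisely the step you do not carry out, and the assertion you substitute for it (``joint stalling is impossible'') is not what holds. Joint stalling can occur; what it yields is not a contradiction but \emph{leverage}: after both stalls one shows $R^L_A \subseteq R^L_C$, so that for each $x$ in the residual row $R_A$ there is a label $w$ such that assigning $w$ one limit behavior forces $x$ isolated in $\Phi^G$ and the other forces $x$ small. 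The construction then searches for a finite $F \subseteq R_A$ with two fresh witnesses $a,b$ in $\Psi_1^{A\cup F}$. If no such $F$ exists, the reservoir is shrunk to $R_A$ and $\Psi_1^{A_\Phi}$ is kept finite --- progress on the antichain, no contradiction anywhere. If such an $F$ exists, a separate claim (proved by a four-case analysis according to whether $\Psi_0^C$ and $\Psi_1^A$ already contain comparable pairs in $\pi^q$) produces a parallel condition $q'$ making $F$ an antichain of isolated elements while rendering $\{a,b\}$ either an incomparable pair of small elements or a comparable pair of isolated elements; $\omega$-orderedness is used only inside one case of that claim, and only after first arguing that $a$ and $b$ must themselves be the labels of specific $x,y \in F$, so that forcing $a<_P b$ versus $b<_P a$ would force both $x<_{\Phi^G}y$ and $y<_{\Phi^G}x$, one of which violates the natural order. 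You explicitly defer exactly this analysis as ``the main obstacle,'' so the proposal has a genuine gap at the decisive point of the proof.

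Two smaller misroutings are worth fixing as well. First, when the chain-side extension tree is ill-founded is the \emph{easy} case, but when it has an infinite path $f$ you should shrink the reservoir to $\ran(f)$ and satisfy the requirement via the second disjunct ($\Psi_0^{C_\Phi}$ is then finite); enrolling $\ran(f)$ in $\mathcal D$ and claiming the first disjunct with ``$C_\Phi$ computable from $G\oplus R$'' is unjustified, since nothing makes a chain or antichain of $\Phi^G$ computable from $G \oplus \ran(f)$ --- the path only controls the behavior of $\Psi_0$, not the order structure of its range. Second, a terminal node of the labeled subtree supplies a single witness $w$, so you cannot in general ``commit $w$ small and a second such witness isolated''; the paper instead arranges in advance that $\Psi_0^C$ (resp.\ $\Psi_1^A$) already contains two elements whose relation is fixed in $\pi^q$, and then chooses the limit behavior of the one new witness $w$ against that relation (isolated if a comparable pair is present, small otherwise). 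This is also why the failed searches couple limit behaviors in the specific way the final claim exploits.
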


        \begin{proof}
            To construct the required objects we build the following:
                \begin{itemize}
                    \item {a sequence $p_0 \ge_{\ds P} p_1 \ge_{\ds P} \cdots$ of $\ds P$-conditions;}
                    \item {two sequences of finite sets $C_{\Phi,0} \subseteq C_{\Phi,1} \subseteq \cdots$, and $A_{\Phi,0} \subseteq A_{\Phi,1} \subseteq \cdots$ for each functional $\Phi$;}
                    \item {a decreasing sequence of infinite sets $R_0 \supseteq R_1 \supseteq \cdots$ such that $C_{\Phi,s} < R_s$ and $A_{\Phi,s} < R_s$ for all $s$ and $\Phi$; and}
                    \item {an increasing sequence of finite families $D_0 \subseteq D_1 \subseteq \cdots$ of infinite subsets of $\omega$.}
                \end{itemize}
            In the end $G = \bigcup_{n \in \omega} \pi^{p_n}$, $C_\Phi = \bigcup_s C_{\Phi,s}$ and $A_\Phi = \bigcup_s A_{\Phi,s}$, for each functional $\Phi$ and $\mathcal{D} = \bigcup_s D_s$.
            We need to make $G$ sufficiently generic over $\mathcal{D}$ and if possible ensure $C_\Phi$ and $A_\Phi$ are an infinite chain and infinite antichain of $\Phi^G$ respectively, one of which computes no infinite chain or infinite antichain of $G$.
            Toward these ends, we satisfy the following requirements for each $s \in \omega$ and all Turing functionals $\Phi$, $\Psi_0$ and $\Psi_1$.

            \begin{quote}
                \begin{itemize}
                    \item[$\mathscr{G}_s$:] $G$ is sufficiently $R$-generic for every $R \in D_s$.
                    \item[$\mathscr{D}_{\Phi,\Psi_0, \Psi_1}$:] If $\Phi^G$ is an $\omega$-ordered stable poset, then either $\Phi^G$ has an infinite chain or infinite antichain computable in $(G \oplus R)$ for some $R \in \mathcal{D}$, or there is an infinite chain $C_\Phi$ and an infinite antichain $A_\Phi$ in $\Phi^G$ such that one of $\Psi_0^{C_\Phi}$ or $\Psi_1^{A_\Phi}$ is not an infinite chain or infinite antichain in $G$.
                \end{itemize}
            \end{quote}
            Distribute the stages of the construction in such a way so that each $\mathscr{G}$ requirement is addressed infinitely often and each $\mathscr{D}$ requirement is addressed once.
            By Lemma \ref{suff}, the $\mathscr{G}$ requirements will ensure that $G$ has no infinite $(G \oplus R)$-computable chain or antichain for each $R \in \mathcal{D}$.
            By Lachlan's Disjunction (Proposition \ref{LD}), the $\mathscr{D}$ requirements will ensure that for each $\Phi$, one of $C_\Phi$ or $A_\Phi$ diagonalizes $\Phi^G$ with respect to any backward functional $\Psi$.

            \bigskip

            {\it Construction.}
                To begin, let $p_0 \in \ds P$ be any condition such that $a^p : |\pi^p| \to \{S, I\} \times (\omega \restriction |\pi^p| + 1)$.
                So $G$ will be a stable poset of the small type.
                Let $D_0 = \emptyset$ and let $C_{\Phi,0} = A_{\Phi,0} = \emptyset$ and $R_{\Phi,0} = \omega$ for all functionals $\Phi$.
                Suppose we are at stage $s$ and given $p_s, C_{\Phi,s}, A_{\Phi,s}$ for all $\Phi$, and $D_s$.
                Assume inductively that if $C_{\Phi,s}$ or $A_{\Phi,s}$ is nonempty for some $\Phi$ then $p_s$ forces $\Phi^G$ is an $\omega$-ordered stable poset and that $C_{\Phi,s}$ and $A_{\Phi, s}$ are respectively a chain of small elements and an antichain of isolated elements in $\Phi^G$.
                At the conclusion of the stage, if any of $p_{s+1}$, $C_{\Phi,s+1}$, $A_{\Phi,s+1}$, $R_{s+1}$ and $D_{s+1}$ have not been explicitly defined, let them equal $p_{s}$, $C_{\Phi,s}$, $A_{\Phi,s}$, $R_{s}$ and $D_{s}$ respectively.

            \bigskip

            {\it $\mathscr{G}$ requirements.}
                These are satisfied in a systematic and straightforward way.
                Let $n$ be sufficiently large to satisfy Lemma \ref{suff}.
                Suppose $s > t$ is the $\langle \ell,m \rangle$th stage dedicated to $\mathscr{G}_t$.
                If $\ell > |D_s|$, do nothing.
                Else, let $R$ be the $\ell$th set in $D_s$ and $W$ be the $m$th $\Sigma^0_n(R)$ set.
                If $p$ has an extension $q \in W$ set $p_{s+1} = q$ and proceed to the next stage.
                Otherwise, do nothing.

            \bigskip
            {\it $\mathscr{D}$ requirements.}
                Suppose stage $s$ is dedicated to requirement $\mathscr{D}_{\Phi,\Psi_0,\Psi_1}$.
                We begin with a few initial extensions of $p_s$, $C_{\Phi,s}$, $A_{\Phi,s}$ and $R_s$ before attempting to make progress on the chain $C_{\Phi,s}$ with an extension tree.
                If this fails, we repeat this search with a second extension tree to make progress on $A_{\Phi,s}$.
                Throughout, if at any point we encounter an infinite set $R$ which will contain only finitely many small or finitely many isolated elements of $\Phi^G$, we add $R$ to $D_s$ and proceed to the next stage.
                If each of these approaches fail, we will contradict that $\Phi^G$ is $\omega$-ordered.

            \medskip
            \noindent
            {\bf Initialization.}

                Begin by extending $p_s$ if necessary to a condition forcing that $\Phi^G$ is an $\omega$-ordered stable poset.
                If no such extension exists, $\mathscr{D}_{\Phi,\Psi_0,\Psi_1}$ is vacuously satisfied and we proceed to the next stage.
                Next, search for an extension $q$ forcing either of the following statements:
                    \begin{itemize}
                        \item {The number of small elements in $\Phi^G \cap R_s$ is finite.}
                        \item {The number of isolated elements in $\Phi^G \cap R_s$ is finite.}
                    \end{itemize}
                If so, then there is set $D$ cofinite in $R_s$ such that $D$ will contain only small or only isolated elements of $\Phi^G$.
                Hence there is an $(R_s \oplus G)$-computable infinite chain or antichain of $\Phi^G$.
                So we set $D_{s + 1} = D_s \cup \{R_s\}$ and $p_{s + 1} = q$ and proceed to the next stage, having satisfied $\mathscr{D}_{\Phi,\Psi_0,\Psi_1}$.

                If there is no such condition, we conclude that $R_s$ contains infinitely many small elements and infinitely many isolated elements of $\Phi^G$.
                Move to an extension $q \le_{\ds P} p_s$ and a triple of sets $(C,A,I)$ such that
                    \begin{itemize}
                        \item {$ C \subseteq C_{\Phi,s} \cup R_s$ and $C$ is a finite chain in $\Phi^{\pi^q}$ with $|C_{\Phi,s}| < |C|$;}
                        \item {$ A \subseteq A_{\Phi,s} \cup R_s$ and $A$ is a finite antichain in $\Phi^{\pi^q}$ with $|A_{\Phi,s}| < |A|$;}
                        \item {$I$ is the infinite set $\{x \in R_s : x > A \land x > C\}$; and}
                        \item {$q$ forces that each $x \in C$ is small in $\Phi^G$, each $x \in A$ is isolated in $\Phi^G$, and for all $x > |\pi^q|$, $C \cup \{x\}$ and $A \cup \{x\}$ are respectively a chain and antichain in $\Phi^G$.}
                    \end{itemize}
                Note we have extended $C_{\Phi,s}$ and $A_{\Phi,s}$ by a finite amount, ensuring that $C_\Phi$ and $A_\Phi$ will be infinite.
                Additionally, choose $C$ and $A$ large enough such that $\Psi_0^C$ and $\Psi_1^A$ have size at least two.
                If this is not possible, then one of $\Psi_0^{C_\Phi}$ or $\Psi_1^{A_\Phi}$ must be finite.
                In this case, we set $C_{\Phi,s+1} = C$, $A_{\Phi,s+1} = A$ and $R_{s+1} = I$ and proceed to the next stage having made progress on at least one of the chain or the antichain and thereby satisfied $\mathscr{D}_{\Phi,\Psi_0,\Psi_1}$.

                Otherwise, we seek to extend $p_s$, $C$, $A$ and $I$ to $p_{s+1}$, $C_{\Phi,s+1}$, $A_{\Phi,s+1}$ and $R_{s+1}$ via tree labeling.
                We first attempt to make progress on $C$ with an extension tree for $\Psi_0$.
                We then attempt to make progress on $A$ with an extension tree for $\Psi_1$.
                Throughout both attempts, if at any point an infinite set $R$ is found which will only contain finitely many small or finitely many isolated elements of $\Phi^G$ we will proceed to the next stage after setting $D_{s + 1} = D_s \cup \{R\}$.
                Unless explicitly defined otherwise, we let $p_{s+1} = p_s$, $C_{\Phi,s+1} = C$, $A_{\Phi,s+1} = A$, and $R_{s+1} = I$.

            \medskip
            \noindent
            {\bf Making progress on the chain.}

                Construct the extension tree $T(C,I,\Psi_0, |\pi^{q}|)$.
                If this tree has a path $f$, let $R_{s+1} = \ran(f)$ and proceed to the next stage.
                Note this satisfies $\mathscr{D}_{\Phi,\Psi_0,\Psi_1}$ as the set defined by $\Psi_0^{C_\Phi}$ will be finite.
                Indeed, for sufficiently large $n$ the oracle use of $\Psi_0^{C_\Phi}(n)$ is contained in $C\cup F$ for some finite $F \subseteq \ran(f)$ and the definition of the extension tree ensures $\Psi_0^{C \cup F}(n) \simeq 0$.

                If $T(C,I,\Psi_0,|\pi^{q}|)$ is well-founded, construct a labeling $\lb : T \to \omega \cup \{\infty\}$ and the corresponding labeled subtree $T^L_C$.
                We aim to determine a finite sequence of $\ds P$ conditions $q \ge_{\ds P} q_0 \ge_{\ds P} q_1 \ge_{\ds P} \dots \ge_{\ds P} q_n \ge_{\ds P} p_{s+1}$ alongside a sequence of strings $\alpha_0 \preceq \alpha_1 \preceq \dots \preceq \alpha_n \preceq \alpha$ in $T^L_C$ in which $\alpha$ is terminal and has an appropriate $F \subseteq \ran(\alpha_n)$ to extend $C$ to $C_{\Phi,s+1}$.

                Let $\alpha_0 = \lambda$. 
                If $\lb(\alpha_0) = w$, find an extension $q_0 \le_{\ds P} q$ such that $a^{q_0}(w) = (I,|\pi^{q_0}|)$ if $\Psi_0^C$ contains two comparable elements in $\pi^q$ or $a^{q_0}(w) = (S,|\pi^{q_0}|)$ otherwise.
                If $\lb(\alpha_0) = \infty$, set $q_0 = q$.
                Assume we have obtained a condition $q_n$ and nonterminal string $\alpha_n \in T^L_C$ such that $\ran(\alpha_n)$ is a chain in $\Phi^{\pi^{q_n}}$ and $q_n$ forces each $x \in \ran(\alpha_n)$ small in $\Phi^G$.
                To extend $\alpha_n$, we consider two cases depending on whether or not the label of $\alpha_n$ agrees with the label of each of its successors.

                Case 1: $\lb(\alpha_n) = \lb(\alpha_n * x)$ for each $x$ with $\alpha * x \in T^L_C$.
                As each element of $\ran(\alpha_n)$ is forced small, there will be some $t$ such that for all $x > t$ with $\alpha_n * x \in T^L_C$, the set $\ran(\alpha_n) \cup \{x\}$ will be a chain in $\Phi^G$.
                Search for a condition $q_{n + 1} \le_{\ds P} q_n$ and $x$ such that $\alpha_n * x \in T^L_C$, $\ran(\alpha_n * x)$ is a chain in $\Phi^{\pi^{q_{n + 1}}}$ and $q_{n + 1}$ forces $x$ small in $\Phi^G$.
                Let $\alpha_{m + 1} = \alpha_n * x$ and proceed.
                If there is no such condition $q_{n + 1}$, then $q_n$ forces that the row below $\alpha_n$, $R = \{x : \alpha_n * x \in T^L_C\}$, will contain only finitely small elements of $\Phi^G$.
                So add $R$ to $D_s$, set $p_{s + 1} = q_n$ and proceed to the next stage having satisfied the requirement.

                Case 2: $\lb(\alpha_n) \ne \lb(\alpha_n * x)$ for some $\alpha_n * x \in T^L_C$.
                Notice this case can only occur if $\lb(\alpha_n) = \infty$ and each of its immediate successors $\alpha_n * x$ have distinct finite labels.
                Note again that any condition $q_{n+1} \le_{\ds P} q_n$ will guarantee $\ran(\alpha_n * x)$ is a chain in $\Phi^G$ for sufficiently large $x$.
                Search for a condition $q_{n + 1} \le_{\ds P} q_n$, and an $x$ and $w$ such that $\alpha_n * x \in T^L_C$, $\lb(\alpha_n * x) = w$, $\ran(\alpha_n * x)$ is a chain in $\Phi^{\pi^{q_{n+1}}}$, and that either
                    \begin{itemize}
                        \item {$q_{n+1}$ forces $x$ small in $\Phi^G$ with $a^{q_{n+1}}(w) = (I,|\pi^{q_{n+1}}|)$ if $\Psi_0^C$ has two comparable elements in $\pi^q$; or}
                        \item {$q_{n+1}$ forces $x$ small in $\Phi^G$ with $a^{q_{n+1}}(w) = (S,|\pi^{q_{n+1}}|)$ if $\Psi_0^C$ has two incomparable elements in $\pi^q$.}
                    \end{itemize}
                Let $\alpha_{n + 1} = \alpha_n * x$ and proceed.
                If no such condition exists, let
                    \[
                        R_C = \{x : \alpha_n * x \in T^L_C \land \ran(\alpha_n * x) \mbox{ will be a chain in } \Phi^G \land \lb(\alpha * x) > |\pi^{q_n}|\}
                    \] 
                and set $R^L_C = \{\langle x, w \rangle : x \in R_C \land \lb(\alpha_n * x) = w\}$.
                Note $R_C$ is an infinite subset of the row below $\alpha_n$.
                For every $\langle x,w \rangle \in R^L_C$, if $q' \le_{\ds P} q_n$ and $a^{q'}$ assigns the required limit behavior to $w$ (depending on whether $\Psi_0^C$ has two comparable or incomparable elements in $\pi^q$) then $q'$ cannot force $x$ small in $\Phi^G$.
                Hence, $q'$ must force $x$ isolated in $\Phi^G$.
                In this case, we have failed to extend $\alpha_n$ but have gained some control on the limit behavior of $x \in R_C$.
                So we must attempt to make progress on the antichain.

                If instead we successfully extend $\alpha_0 \le \dots \le \alpha_n$ to a terminal string $\alpha \in T^L_A$ at condition $q_n$ then $\lb(\alpha)$ is finite, say with value $w$.
                In this case there is some $F \subseteq \ran(\alpha)$ such that $\Psi_0^{C \cup F}(w)\halts = 1$, and $q_n$ forces that every element in the chain $C \cup F$ is small in $\Phi^G$.
                Moreover, $q_n$ will make $w$ isolated in $P$ if $\Psi_0^{C \cup F}$ contains two comparable elements of $\pi^q$, and otherwise make $w$ small.
                Either way, any infinite chain $X$ extending $C \cup F$ in $\Phi^G$ will have that $\Psi_0^X$ is not an infinite chain or antichain of $G$.
                We set $p_{s+1} = q_n$, $C_{\Phi,s+1} = C \cup F$ and $R_{s+1} = \{x \in I : x > C \cup F \cup A\}$ and proceed to the next stage, having made progress on the chain.

                If at any point, we fail to extend some $\alpha_n$ then we either found a set $R$ to add to $D_s$ or we are at a condition $q_n$ with the sets $R_C$ and $R^L_C$.
                With newfound leverage, we repeat the search in an extension tree for the antichain $A$.
            
            \medskip
            \noindent
            {\bf Making progress on the antichain.}

                Construct the extension tree $T(A,R_C,\Psi_1,|\pi^{q_n}|)$.
                As with the previous extension tree, if $T(A,R_C,\Psi_1,|\pi^{q_n}|)$ has an infinite path $f$, set $p_{s+1} = q_n$, and $R_{s+1} = \ran(f)$ and proceed to the next stage having made progress on the antichain.
                Otherwise, $T(A,R_C,\Psi_1,|\pi^{q_n}|)$ is well-founded.
                Form a labeling $\lb$ of $T(A,R_C,\Psi_1,|\pi^{q_n}|)$ and the corresponding labeled subtree $T^L_A$.
                We again aim to construct a finite sequence of conditions $q_n \ge_{\ds P} q_{n+1} \ge_{\ds P} \dots \ge_{\ds P} q_m \ge_{\ds P} p_{s+1}$ and a terminal string $\alpha \in T^L_A$ containing a suitable $F \subseteq \ran(\alpha)$ with which to extend $A$ to $A_{\Phi,s+1}$.

                We build a sequence $\alpha_1 \preceq \alpha_2 \preceq \alpha_m \preceq \alpha \in T^L_A$ similar to above while highlighting the differences.
                Let $\alpha_1 = \lambda$. If $\lb(\alpha_1) = w$, move to an extension $q_{n + 1} \le_{\ds P} q_n$ with $a^{q_{n+1}}(w) = (I,|\pi^{q_{n+1}}|)$ if $\Psi_1^A$ contains two comparable elements in $\pi^q$ or $a^{q_{n+1}}(w) = (S,|\pi^{q_{n+1}}|)$ otherwise.
                If $\lb(\alpha_1) = \infty$, set $q_{n+1} = q_n$.
                Assume we have constructed $\alpha_m$ and found condition $q_{n + m}$ such that $\ran(\alpha_m)$ is an antichain in $\Phi^{\pi^{q_{n + m}}}$ and $q_{n + m}$ forces each element in $\ran(\alpha_m)$ isolated in $\Phi^G$.
                We again consider two cases to extend $\alpha_m$ by one element.

                Case 1: $\lb(\alpha_m) = \lb(\alpha_m * x)$ for all $x$ with $\alpha_m * x \in T^L_A$.
                Since the elements of $\ran(\alpha_m)$ are forced isolated in $\Phi^G$, we conclude $\ran(\alpha * x)$ will be an antichain in $\Phi^G$ for sufficiently large $x$ with $\alpha * x \in T^L_A$.
                Thus, if there is an $x$ and condition $q_{n+m+1}$ such that $\alpha * x \in T^L_A$, $\ran(\alpha * x)$ is an antichain in $\Phi^{\pi^{q_{n+m+1}}}$, and $q_{n+m+1}$ forces $x$ isolated in $\Phi^G$, let $\alpha_{m + 1} = \alpha_m * x$ and proceed.
                Otherwise, $q_{n + m}$ forces the row $R$ below $\alpha_m$ in $T^L_A$ to contain only finitely many isolated elements of $\Phi^G$.
                So we set $p_s = q_{n+m}$, $R_{s+1} = R_C$, and $D_{s+1} = D_s \cup \{R\}$ and conclude this stage of the construction having satisfied the requirement.

                Case 2: $\lb(\alpha_m) \ne \lb(\alpha_m * x)$ for some $\alpha_m * x \in T^L_C$.
                Then $\lb(\alpha_m) = \infty$ and for sufficiently large $x$, $q_{n + m}$ forces $\ran(\alpha_m * x)$ to be an antichain in $\Phi^G$.
                Search for a condition $q_{n + m + 1} \le_{\ds P} q_{n+m}$, and an $x$ and $w$ such that $\alpha_m * x \in T^L_A$, $\lb(\alpha_m * x) = w$, $\ran(\alpha_m * x)$ is an antichain in $\Phi^{\pi^{q_{n+m+1}}}$, and that either
                    \begin{itemize}
                        \item {$q_{n+m+1}$ forces $x$ isolated in $\Phi^G$ with $a^{q_{n+m+1}}(w) = (I,|\pi^{q_{n+m+1}}|)$ if $\Psi_1^A$ has two comparable elements in $\pi^q$; or}
                        \item {$q_{n+m+1}$ forces $x$ isolated in $\Phi^G$ with $a^{q_{n+m+1}}(w) = (S,|\pi^{q_{n+m+1}}|)$ if $\Psi_1^A$  has two incomparable elements in $\pi^q$.}
                    \end{itemize}
                Let $\alpha_{m + 1} = \alpha_m * x$ and proceed.
                If no such condition exists, let
                    \[
                        R_A= \{x : \alpha_m * x \in T^L_C \land \ran(\alpha_m * x) \mbox{ will be an antichain in } \Phi^G \land \lb(\alpha * x) > |\pi^{q_{n+m}}|\}.
                    \] 
                and set $R^L_A= \{\langle x,w \rangle : x \in R_A \land \lb(\alpha * x) = w \}$.
                As before $R_A$ is an infinite subset of the row below $\alpha_m$.
                And similar to $R^L_C$, we have that for any $\langle x,w \rangle \in R^L_A$, if $q' \le_{\ds P} q_{n + m}$ and assigns the correct limit behavior to $w$, then $q'$ forces $x$ small in $\Phi^G$.
                In this situation, we will make progress on the antichain by finding a finite set $F \subseteq R_A$ to extend $A$.

                If we successfully extend $\alpha_1 \preceq \dots \preceq \alpha_m$ to a terminal string $\alpha \in T^L_A$ at condition $q_{n+m}$, then $\lb(\alpha) = w$ for some $w$.
                There is a set $F \subseteq \ran(\alpha)$ such that $\Psi_1^{A \cup F}(w) \halts = 1$.
                Furthermore, $q_{n + m}$ forces $A \cup F$ to be an antichain of isolated elements in $\Phi^G$ and assigns $w$ the correct limit behavior to ensure any infinite antichain $X$ extending $A \cup F$ does not define via $\Psi_1$ an infinite chain or antichain of $G$.
                Thus we have made progress on the antichain and set $p_{s+1} = q_{n + m}$, $A_{\Phi,s+1} = A \cup F$, $R_{s+1} = \{x \in R_C : x > A \cup F \cup C\}$ before proceeding to the next stage.

                If we failed to extend some $\alpha_m$, then we either found a set $R$ to add to $D_s$ or we are at condition $q_{n+m}$ with the sets $R_A$ and $R^L_A$ as well as the sets $R_C$ and $R^L_C$.
                Search for a finite set $F \subseteq R_A$ such that $\Psi_1^{A \cup F}(a)\halts = \Psi_1^{A \cup F}(b)\halts = 1$ for two fresh elements $a,b > |\pi^{q_{n+m}}|$. 
                We claim there is an extension $q' \le_{\ds P} q_{n+m}$ which forces $A \cup F$ to be an antichain of isolated elements in $\Phi^G$ and guarantees that $\Psi_1^{A \cup F}$ contains two incomparable but small elements or two comparable but isolated elements of $P$.
                Thus we set $p_{s+1} = q'$, $A_{\Phi,s+1} = A \cup F$, $R_{s+1} = \{x \in R_A : x > A \cup F \cup C\}$ and proceed to the next stage having made progress on the antichain.
                If there is no such set $F \subseteq R_A$, then set $R_{s+1} = R_A$ and $p_{s+1} = q_{n+m}$.
                This guarantees that any infinite antichain $X$ extending $A$ will have $\Psi_1^X$ finite so we make progress on the antichain in this case as well.
                This concludes the construction.

                \bigskip

                It remains to prove the claim and verify the construction.
                We first prove the claim.
                The idea is to show that if such a condition $q'$ does not exist then $\Phi^G$ is not $\omega$-ordered.

            \begin{sclaim}
                There is a condition $q' \le_{\ds P} q_{n+m}$ which forces that $A \cup F$ is an antichain of isolated elements in $\Phi^G$ and there are two elements $a,b \in \Psi_1^{A \cup F}$ such that either 
                    \begin{enumerate}
                        \item {$\{a,b\}$ is an antichain in $\pi^{q'}$ and $a^{q'}$ sets both $a$ and $b$ to be small, or}
                        \item {$\{a,b\}$ is a chain in $\pi^{q'}$ and $a^{q'}$ sets both $a$ and $b$ to be isolated.}
                    \end{enumerate}
            \end{sclaim}

            \noindent
            {\it Proof of the claim.}
            Recall any $q \le_{\ds P} q_{n+m}$ forces that $A$ contains only isolated elements of $\Phi^G$ and for any $x > |\pi^q|$, that $A \cup \{x\}$ is an antichain in $\Phi^G$.
            So it suffices to show the existence of a condition $q' \le q_{n+m}$ which makes $F$ an antichain of isolated elements in $\Phi^G$ and for which one of statements 1 or 2 hold.

            To begin, note that $R_A \subseteq R_C$ by construction and moreover, $R^L_A \subseteq R^L_C$.
            If not, then there is an $x \in R_A$ such that $(x,w) \in R^L_A$ and $(x,w') \in R^L_C$.
            Extending $q_{n+m}$ to a condition in which $w$ and $w'$ have the correct limit behaviors forces $x$ to be isolated by its membership in $R_C$ and small by its membership in $R^A$, a contradiction.
            Thus we have complete control over the limit behavior of the elements of $F$ as $F \subseteq R_A \subseteq R_C$.
            To elaborate, for each $x \in F$ there is a $w$ such that $\langle x,w\rangle \in R^L_A$.
            Any condition $r \le_{\ds P} q_{n+m}$ determines whether $x$ will be small or isolated in $\Phi^G$ by the limit behavior it assigns $w$ (depending on the structure of $\Psi_0^C$ and $\Psi_1^A$).
            So we may ensure the elements of $F$ are isolated via the labels of its elements.
            Unless otherwise mentioned, we now only consider labels $w$ corresponding to $x \in F$.

            The operative fact here is that for any condition $r$, we may freely adjust the limit behaviors as needed of $a,b$ and the labels $w$. 
            This is because the limit behaviors $a^r$ assigns to $a,b$ and each $w$ has no effect on the local structure of $\pi^r$.
            So we may take a parallel condition $r'$ that assigns limit behavior in the way we need.
            This will not affect the local structure of $F$ as $\Phi^{\pi^r} = \Phi^{\pi^{r'}}$.
            Thus, if no $q'$ exists, we can conclude that this failure is not witnessed by limit behavior but instead locally.
            For example, if statement 1 fails this would mean that if $\{a,b\}$ is an antichain in $P$, then $F$ cannot itself be an antichain in $\Phi^G$.
            
            We will now show by contradiction that either statement 1 or 2 must hold.
            Since the way each label $w$ affects the corresponding element in $F$ depends on the structure of $\Psi_0^C$ and $\Psi_1^A$, we have four cases to consider.
            The concern will be when $a$ and $b$ are themselves labels of $F$.

            \begin{quote}
                {\bf Case 1:} $\Psi_0^C$ and $\Psi_1^A$ both have two comparable elements in $\pi^q$.
                Here if $\langle x,w \rangle \in R^L_A$, then any $r \le q_{n+m}$ with $w$ isolated forces $x$ to be both small and isolated in $\Phi^G$.
                So case 1 cannot obtain.

                \medskip 

                {\bf Case 2:} $\Psi_0^C$ and $\Psi_1^A$ both lack two comparable elements in $\pi^q$.
                Here if $\langle x,w \rangle \in R^L_A$, then any $r \le_{\ds P} q_{n+m}$ with $w$ small forces $x$ to be both small and isolated in $\Phi^G$.
                So case 2 cannot obtain.

                \medskip

                {\bf Case 3:} $\Psi_1^C$ lacks two comparable elements in $\pi^q$ but $\Psi_0^A$ does not.
                Here if $\langle x,w \rangle \in R^L_A$, then any $r \le_{\ds P} q_{n+m}$ which has $w$ isolated forces $x$ small in $\Phi^G$ because $x \in R_A$ and any $r \le_{\ds P} q_{n+m}$ with $w$ small forces $x$ isolated in $\Phi^G$ because $x \in R_C$.

                In this case, we show statement 1 must hold.
                Suppose not: that is, that there is no condition $q'$ which forces $F$ to be an antichain of isolated elements in $\Phi^G$ while having $\{a,b\}$ an antichain of small elements in $\pi^{q'}$.
                As mentioned above, this cannot occur due to limit behavior, but instead happens due to local behavior.
                Specifically, any condition $r \le_{\ds P} q_{n+m}$ which has $\{a,b\}$ an antichain in $\pi^r$ forces $x <_{\Phi^G} y$ for some $x,y \in F$.
                Choose a condition $r$ which makes the label of $x$ small, forcing $x$ isolated, and makes the label of $y$ isolated, forcing $y$ small.
                Note that as $a \mid_{\pi^r} b$, there are no concerns over whether or not $a$ and $b$ label $x$ and $y$.
                As $r$ forces $x <_{\Phi^G} y$ with $y$ small, $x$ must be small in $\Phi^G$.
                Then $x$ will be both small and isolated, a contradiction

                \medskip

                {\bf Case 4:} $\Psi_0^C$ has two comparable elements in $\pi^q$ but $\Psi_1^A$ does not.
                Here if $\langle x,w \rangle \in R^L_A$, then any $r \le_{\ds P} q_{n+m}$ which has $w$ isolated forces $x$ isolated in $\Phi^G$ because $x \in R_C$ and any $r \le_{\ds P} q_{n+m}$ with $w$ small forces $x$ small in $\Phi^G$ because $x \in R_A$.

                Here we show statement 2 must hold.
                Suppose not: then any condition $r \le_{\ds P} q_m$ setting $a <_P b$ forces $x <_{\Phi^G} y$ for some $x,y \in F$.
                From this, we conclude $\langle x,a \rangle, \langle y,b \rangle \in R^L_A$.
                That is, $a$ and $b$ are the labels corresponding to $x$ and $y$.
                To see this, note first that if $\langle x,b \rangle,\langle y,a \rangle \in R^L_A$ then any condition setting $b$ isolated and $a$ small, would in turn force $x$ isolated and $y$ small.
                But then the fact that $x <_{\Phi^G} y$ would imply $x$ is also small in $\Phi^G$, a contradiction.
                Next suppose $b$ is not the label for $y$.
                We can then force $x$ isolated and $y$ small via their labels, without affecting the limit behavior of $b$, and obtain a similar contradiction.
                {\it Mutatis mutandis}, we have that $a$ is the label for $x$.

                We also note that any condition setting $b <_P a$ forces $y <_{\Phi^G} x$.
                This follows from the structure of $T^L_A$ as every element in $R_A$ corresponds to a distinct label.
                Since $b <_P a$ forces some $w <_{\Phi^G} z$ with $w,z \in F$, we have that $b$ is the label for $w$ and $a$ the label for $z$.
                So $w = y$ and $z = x$.

                Summarizing, setting $a <_P b$ or $b <_P a$ forces $x <_{\Phi^G} y$ or $y <_{\Phi^G} x$ respectively.
                But $\Phi^G$ is $\omega$-ordered, so only one of $x <_{\Phi^G} y$ or $y <_{\Phi^G} x$ is valid.
                Assume it is the former, and move to a condition $r \le q_{n+m}$ such that $b <_P a$.
                Then $r$ forces that $y <_{\Phi^G} x$ with $x < y$, contradicting that $\Phi^G$ is $\omega$-ordered. 
            \end{quote}

            In any case, we show there must be a condition $q'$ for which statement 1 or 2 holds.
            This completes the proof of the claim.

            \bigskip

            {\it Verification.}
                Let $G$ be the generic poset and $\mathcal{D}$ the collection of infinite sets resulting from the construction.
                If $\Phi$ is a functional such that $\Phi^G$ is an $\omega$-ordered stable poset then either it has a $(G \oplus R)$-computable infinite chain or infinite antichain $X$ for some $R \in \mathcal{D}$, or two infinite sets $C_\Phi$ and $A_\Phi$ were constructed.
                In the first case, as each $\mathscr{G}$-requirement was satisfied, we have by Lemma \ref{suff} that $\Psi^X$ is not an infinite chain or infinite antichain of $G$ for any functional $\Psi$.
                In the second case, $C_\Phi$ and $A_\Phi$ are respectively a chain and antichain in $\Phi^G$.
                Furthermore, for every pair of functionals $(\Psi_0, \Psi_1)$ one of $\Psi_0^{C_\Phi}$ or $\Psi_1^{A_\Phi}$ is guaranteed not to be an infinite chain or infinite antichain of $G$ by the $\mathscr{D}$-requirements.
                Applying Lachlan's Disjunction (Proposition \ref{LD}) yields that for some $X \in \{C_\Phi,A_\Phi\}$, $\Psi^X$ is not an infinite chain or infinite antichain of $G$ for any functional $\Psi$.
                Thus $G$ is the desired stable poset and $\mathcal{D}$ is the desired collection of infinite sets.
        \end{proof}

        From this, the main result of this section immediately follows.

        \begin{cor}\label{main}
            $\scac \not \scred \scaco$
        \end{cor}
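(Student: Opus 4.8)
\emph{Proof proposal.} The plan is to obtain Corollary \ref{main} as an immediate consequence of Theorem \ref{potatoes}, with no further combinatorics required. Assume toward a contradiction that $\scac \scred \scaco$. By Definition \ref{reduc}, this supplies, for each stable poset $X_{\scac}$, an $\omega$-ordered stable poset $X_{\scaco} \tred X_{\scac}$ such that every solution $Y_{\scaco}$ of $X_{\scaco}$ computes a solution $Y_{\scac}$ of $X_{\scac}$. Let $G$ and $\mathcal{D}$ be the stable poset and the family of infinite sets produced by Theorem \ref{potatoes}. Since $G$ is a stable poset it is a legitimate instance of $\scac$, so the reduction yields an $\omega$-ordered stable poset $\hat G \tred G$ with the property that every infinite chain or antichain of $\hat G$ computes an infinite chain or antichain of $G$.

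Next I would invoke the ``moreover'' clause of Theorem \ref{potatoes}, which applies precisely because $\hat G \tred G$: the poset $\hat G$ possesses an infinite chain or antichain $X$ that is either (i) computable from $G \oplus R$ for some $R \in \mathcal{D}$, or (ii) computes no infinite chain or antichain of $G$. In case (ii), feed the solution $X$ of $\hat G$ through the reduction to obtain a solution $Y$ of $G$ with $Y \tred X$; then $X$ computes an infinite chain or antichain of $G$, contradicting (ii). In case (i), the same step yields a solution $Y$ of $G$ with $Y \tred X \tred G \oplus R$, so by transitivity of $\tred$ the set $Y$ is a $(G \oplus R)$-computable infinite chain or antichain of $G$, contradicting the first conclusion of Theorem \ref{potatoes} that no such set exists. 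Either way we reach a contradiction, whence $\scac \not \scred \scaco$.

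I do not anticipate a genuine obstacle here: Theorem \ref{potatoes} has already absorbed the forcing construction and the tree-labeling machinery. The only thing requiring care is the bookkeeping at the end --- correctly pairing the two disjuncts of the ``moreover'' clause with the two failure modes of a strong computable reduction, and noting that in case (i) the two Turing reductions compose so that the extracted solution of $G$ remains computable from $G \oplus R$. For completeness one should also record that the definition of $\scred$ quantifies over all instances of $\scac$, so applying it to the (possibly noncomputable) poset $G$ supplied by Theorem \ref{potatoes} is legitimate.
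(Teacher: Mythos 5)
Your proposal is correct and is essentially the paper's own argument: the paper's proof of Corollary \ref{main} is the single line that the corollary ``is witnessed by any stable poset satisfying Theorem \ref{potatoes},'' and your write-up simply makes explicit the intended case analysis (case (ii) is ruled out because a strong reduction forces every solution of $\hat G$ to compute a solution of $G$, and case (i) is ruled out because the computed solution would then be $(G \oplus R)$-computable, contradicting the first clause of Theorem \ref{potatoes}). Your added remarks about composing the Turing reductions and about $\scred$ applying to noncomputable instances are accurate and do not change the approach.
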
 

        \begin{proof}
            This is witnessed by any stable poset satisfying Theorem \ref{potatoes}.
        \end{proof}


\bibliographystyle{elsarticle-num}
\bibliography{omega_cac}

\end{document}